\theoremstyle{plain}
\newtheorem{theorem}{Theorem}[section]
\newtheorem{proposition}[theorem]{Proposition}
\newtheorem{lemma}[theorem]{Lemma}
\newtheorem{corollary}[theorem]{Corollary}
\theoremstyle{definition}
\newtheorem{definition}[theorem]{Definition}
\theoremstyle{remark}
\newtheorem{remark}[theorem]{Remark}
\newcommand{\R}{\mathbb{R}}   
\newcommand{\B}{\mathbb{B}}   
\newcommand{\T}{\mathbb{T}}   
\newcommand{\Sset}{\mathbb{S}}   
\newcommand{\Pset}{\mathds{P}}
\newcommand{\0}{\mathbf{0}} 
\DeclareMathOperator{\activ}{\mathbf{A}}
\DeclareMathOperator{\gram}{\mathbf{G}}
\DeclareMathOperator*{\argmax}{arg\,max}
\DeclareMathOperator{\rem}{rem}
\DeclareMathOperator{\pspan}{pspan}
\DeclareMathOperator{\Diag}{Diag}
\DeclareMathOperator{\diag}{diag}
\DeclareMathOperator{\rank}{rank}
\DeclareMathOperator{\conv}{conv}
\DeclareMathOperator{\cm}{cm}
\DeclareMathOperator{\interior}{int}
\newcommand{\cv}{c\mathbf {V}}
\journal{EURO Journal on Computational Optimization}
\begin{document}

\begin{frontmatter}



\title{On the Computation of Cosine Measures in High Dimensions} 


\author{Warren Hare, Scholar Sun} 

\affiliation{organization={University of British Columbia, Department of Mathematics},
            addressline={3333 University Way}, 
            city={Kelowna},
            postcode={V1V 1V7}, 
            state={British Columbia},
            country={Canada}}

\begin{abstract}
In derivative-free optimization, the cosine measure is a value that often arises in the convergence analysis of direct search methods. 
{Given the increasing interest in high-dimensional derivative-free optimization problems, it is valuable to compute the cosine measure in this setting; however, it has recently been shown to be NP-hard.} We propose a new formulation of the problem and heuristic to tackle this problem in higher dimensions and compare it with existing algorithms in the literature. In addition, new results are presented to facilitate the construction of sets with specific cosine measures, allowing for the creation of a test-set to benchmark the algorithms with.
\end{abstract}


\begin{highlights}
\item Proposed a new reformulation of the cosine measure problem as a norm maximization problem.
\item Based on the reformulation, proposed two new heuristics to solve the cosine measure problem.
\item Derived new theorems to generate sets with nearly arbitrary cosine measure.
\item Compiled a test collection of sets with known cosine measure to benchmark the algorithms.
\item Produced numerical results comparing the algorithms and demonstrated the efficacy of the proposed heuristics. 
\end{highlights}

\begin{keyword}
Derivative-free Optimization \sep Black-box Optimization \sep Global Optimization \sep Cosine Measure \sep Positive Bases \sep Positive Spanning Sets 



\end{keyword}

\end{frontmatter}



\section{Introduction}
Derivative-free optimization (DFO) is the study of optimization algorithms without the use of derivative information \cite{audet2017derivative}. These algorithms are useful when the first-order information is impossible or computationally expensive to obtain. For example, when the objective function is given by the result of a simulation, it is often difficult to recover an analytic form for the gradient. These types of problems are ubiquitous and arise in various fields, including computer science, chemistry, physics, engineering, and geosciences \cite{directsearchSurvey2021}. 

One main sub-class of algorithms used in DFO is direct search methods. At their core, these methods iteratively update an incumbent solution based on the values of a set of trial points, which are computed based on a set of search directions. If none of the trial points improve on the incumbent solution, a step-size parameter is updated, and a new set of trial points is examined. 
{For example, algorithms such as generalized pattern search \cite{torczon_convergence_1997} and mesh adaptive direct search \cite{audet2006mads} fit such a framework. Direct search methods are widely used in practice and we refer the reader to \cite{directsearchSurvey2021} for a survey detailing the numerous applications solved by these methods.}

Typically, the set of search directions are chosen such that they are a \textit{positive spanning set}, which we define in Section~\ref{sec:preliminaries}. This property ensures that the set of search directions can adequately span the space.
The {\em cosine measure} quantifies the density of the search directions and the convergence analysis of direct search methods is often related to this value.
{We also note that the cosine measure problem is closely related to the spherical covering problem in discrete geometry \cite{dodangeh_optimal_2016} and discrepancy theory \cite{sphericalDiscJones2020}.}
To compute the cosine measure requires solving a minimax optimization problem, which we present in Section~\ref{sec:preliminaries}. Recent work in \cite{hare_deterministic_2020} produced an algorithm to compute the cosine measure of positive spanning sets through an exhaustive and deterministic approach. At around the same time, \cite{regis_properties_2021} introduced the theory that characterizes solutions to the cosine measure problem and an outline of an algorithm to solve the cosine measure problem for finite, arbitrary sets. Work done in \cite{sphericalDiscJones2020} proved that the cosine measure problem for positive spanning sets is APX hard, in other words NP-hard to obtain an approximate solution within a constant factor of the true solution. 

Inspired by interest in high-dimensional DFO, we consider the challenge of computing the cosine measure in the high-dimensional setting. To this end, we propose a reformulation of the cosine measure problem as a quadratic program, more specifically as the maximization of the squared 2-norm over a polytope. From this reformulation, we derive an improved exhaustive search method based on enumerating all possible extreme points of the feasible region. In addition, we propose a simple heuristic that allows us to significantly increase the number of solvable dimensions. To test the algorithms' performance, we propose a test set composed of various positive spanning sets with known cosine measures. To ensure a robust test set, two new theorems on the construction of sets with a nearly arbitrary choice of cosine measure are presented.

The rest of the paper is organized as follows. In Section~\ref{sec:preliminaries}, we introduce our notation, definitions, and preliminary theory. In Section~\ref{sec:reformulation}, we present reformulations of the cosine measure problem. In Section~\ref{sec:algos}, we discuss algorithms for solving the cosine measure problem, including a review of existing approaches and the presentation of our new algorithm. In Section~\ref{sec:testset}, we outline how the test set is created and introduce two new ways to construct sets with known cosine measures. In Section~\ref{sec:numerics}, we present our numerical results, comparing the performance between algorithms using our proposed test set. Finally, in Section~\ref{sec:conclusion}, we summarize our results and present possible future directions. 

\section{Preliminaries}\label{sec:preliminaries}

We use $e^{m \times n}$ to denote an $m \times n$ matrix where all entries are 1. We use $e$ to denote a vector where all entries are 1 and $e_i$ to denote an vector where entry $i$ is $1$ and all other entries are $0$. We use $\mathbf 0$ to denote a vector where all entries are 0. Let $A \in \R^{m \times n}$ and $v \in \R^n$. Then $[A]_{i,j}$ denotes the entry in row $i$ and column $j$ of matrix $A$ and $[A]_j$ denotes column $j$ of $A$. We use $[v]_i$ to denote entry $i$ of vector $v$. 
We use $\Diag(v)$ to denote an $n \times n$ matrix with diagonal equal to $v$ and all other entries equal to $0$. If $V_1, \dots, V_k$ are matrices, then $\Diag(V_1, \dots, V_k)$ refers to the block diagonal matrix with blocks formed by $V_i$. Similarly, we use $\diag(V_1)$ to denote the vector where entry $i$ is $[V_1]_{i,i}$. We use $\|v\|$ to denote the usual $2$-norm of $v$. Let $S$ be a set. We use $|S|$ to denote the cardinality of $S$. A polyhedron is the intersection of finitely many half-spaces and a polytope is a bounded polyhedron.

\subsection{Positive Spanning Sets and Cosine Measure} \label{sec:prelim:positive-bases}

We begin by defining the positive span of a set, which is also known as the conical hull of a set. Moreover, we present two characterizations and a basic property of positive spanning sets. 

\begin{definition}
The \emph{positive span} of $\Sset = \{d_1, \dots, d_k\} \subset \mathbb R^n$ is given by
$$\pspan (\Sset) = \{\lambda_1 d_1 + \cdots + \lambda_k d_k  : \lambda_i \geq 0 \text{ for all }i=1, \dots, k\}.$$
We say a set $\Sset$ is \emph{positive spanning} if $\pspan(\Sset) = \R^n$.
\end{definition}

\begin{proposition} \cite[Proposition 2.1 iv]{regis_properties_2016} \label{thm:pos-scale-spanning-set}
    If $\Sset = \{d_1,\dots, d_k\} \subset \R^n$ and $\lambda_1, \dots, \lambda_k > 0$, then $\pspan(\Sset) = \pspan(\{\lambda_1d_1,\dots, \lambda_k d_k\})$.
\end{proposition}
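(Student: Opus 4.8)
The plan is to prove the equality by establishing the two set inclusions separately, writing $\Sset' = \{\lambda_1 d_1, \dots, \lambda_k d_k\}$ for brevity, and then observing that the second inclusion follows from the first by a symmetry argument.

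For the inclusion $\pspan(\Sset) \subseteq \pspan(\Sset')$, I would take an arbitrary $v \in \pspan(\Sset)$ and, by definition, write $v = \mu_1 d_1 + \cdots + \mu_k d_k$ with each $\mu_i \geq 0$. Since every $\lambda_i > 0$, each term can be rewritten as $\mu_i d_i = (\mu_i/\lambda_i)(\lambda_i d_i)$, so that $v = \sum_{i=1}^k (\mu_i/\lambda_i)(\lambda_i d_i)$. The new coefficients $\mu_i/\lambda_i$ are well-defined and nonnegative because $\mu_i \geq 0$ and $\lambda_i > 0$, hence $v$ is a nonnegative combination of the elements of $\Sset'$, i.e. $v \in \pspan(\Sset')$.

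For the reverse inclusion, note that $\Sset$ is recovered from $\Sset'$ by scaling each element $\lambda_i d_i$ by the positive scalar $1/\lambda_i$. Applying the inclusion just proved, but with $\Sset'$ in the role of $\Sset$ and the positive scalars $1/\lambda_i$ in the role of the $\lambda_i$, yields $\pspan(\Sset') \subseteq \pspan(\Sset)$. Combining the two inclusions gives $\pspan(\Sset) = \pspan(\Sset')$.

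There is no genuine obstacle in this argument; the only point worth flagging is that strict positivity of the $\lambda_i$ (rather than mere nonnegativity) is used in two places: to guarantee that the rescaled coefficients $\mu_i/\lambda_i$ are well-defined and nonnegative, and to guarantee that the scaling is invertible so that the same reasoning applies in both directions. As an immediate consequence, $\pspan(\Sset) = \R^n$ if and only if $\pspan(\Sset') = \R^n$, which is the form in which the proposition is most useful in later sections.
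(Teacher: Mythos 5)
Your proof is correct: the coefficient-rescaling argument for one inclusion and the symmetry argument with the scalars $1/\lambda_i$ for the other is exactly the standard way to establish this fact. The paper itself gives no proof, citing the result directly from the literature, so there is nothing to contrast with; your argument is the expected elementary one and needs no changes.
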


\begin{theorem}\label{thm:pspan-char}
    Suppose  $\Sset = \{d_1, \dots, d_k\} \subset \R^n$. Then $\Sset$ positively spans $\R^n$ if and only if 
    \begin{enumerate}[label={(\roman*)}]
        \item \cite[Theorem 2.6]{regis_properties_2016} for every $v \in \R^n \setminus \{\0\}$, there exists an index $i \in \{1, \dots, k\}$ such that $v^\top d_i > 0$. \label{thm:pspan-char:spanning-direction}
        \item \cite[Theorem 2.3]{DFO-textbook-2009}\label{thm:pos-spanning-sum-equals-0} $\Sset$ linearly spans $\R^n$ and there exist real scalars $\lambda_1,...,\lambda_k$ with $\lambda_i>0$, $i\in \{1,\dots, k\}$, such that $\sum_{i=1}^k \lambda_id_i =0$. \label{thm:pspan-char:pos-spanning-sum-equals-0}

    \end{enumerate}
\end{theorem}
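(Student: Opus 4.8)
The statement is classical and both equivalences reduce to standard facts about convex cones; the plan is to establish \ref{thm:pspan-char:pos-spanning-sum-equals-0} first, using only linear algebra, and then \ref{thm:pspan-char:spanning-direction}, using a convex separation argument. (One could alternatively derive one item from the other, but treating them separately keeps each argument short.)

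For \ref{thm:pspan-char:pos-spanning-sum-equals-0}, the forward direction is almost immediate: if $\pspan(\Sset) = \R^n$ then $\Sset$ linearly spans $\R^n$ because the positive span is contained in the linear span; moreover $-\sum_{j=1}^k d_j \in \R^n = \pspan(\Sset)$, so $-\sum_j d_j = \sum_i \mu_i d_i$ with each $\mu_i \geq 0$, and rearranging gives $\sum_i (1+\mu_i) d_i = \0$ with every coefficient $1+\mu_i > 0$. For the reverse direction, suppose $\Sset$ linearly spans $\R^n$ and $\sum_i \lambda_i d_i = \0$ with each $\lambda_i > 0$. Given an arbitrary $v \in \R^n$, write $v = \sum_i \alpha_i d_i$ with $\alpha_i \in \R$; for any $t \in \R$ we may add $t\sum_i \lambda_i d_i = \0$ to obtain $v = \sum_i (\alpha_i + t\lambda_i) d_i$, and taking $t = \max_i \max\{0, -\alpha_i/\lambda_i\}$ (finite since each $\lambda_i>0$) makes all coefficients nonnegative, so $v \in \pspan(\Sset)$; hence $\pspan(\Sset) = \R^n$.

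For \ref{thm:pspan-char:spanning-direction}, if $\pspan(\Sset) = \R^n$ and $v \neq \0$, write $v = \sum_i \lambda_i d_i$ with $\lambda_i \geq 0$; then $0 < \|v\|^2 = \sum_i \lambda_i (v^\top d_i)$, which forces $v^\top d_i > 0$ for at least one index $i$. Conversely, I would argue by contraposition: if $\pspan(\Sset) \neq \R^n$, pick $y \notin \pspan(\Sset)$. The cone $C := \pspan(\Sset)$ is finitely generated, hence a closed convex set, so the separating hyperplane theorem yields $v$ with $v^\top y > \sup_{x\in C} v^\top x$; because $C$ is a cone containing $\0$ this supremum must be $0$ (it cannot be $+\infty$ since the left side is finite), so $v^\top x \leq 0$ for all $x \in C$, and in particular $v^\top d_i \leq 0$ for every $i$ while $v \neq \0$ (as $v^\top y > 0$) --- exactly the negation of the right-hand side of \ref{thm:pspan-char:spanning-direction}. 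The only genuine subtlety is this separation step: it relies on knowing that a finitely generated cone is closed and that a linear functional bounded above on a cone is nonpositive on it; both are standard, and the argument can equivalently be phrased through polar cones, using $\pspan(\Sset)^{\circ} = \{v : v^\top d_i \leq 0 \text{ for all } i\}$ together with the fact that a closed convex cone equals $\R^n$ if and only if its polar is $\{\0\}$. Everything else is routine bookkeeping.
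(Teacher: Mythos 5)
Your proof is correct. Note, however, that the paper does not prove this theorem at all: both items are quoted from the literature (item \ref{thm:pspan-char:spanning-direction} from Regis's properties paper, item \ref{thm:pspan-char:pos-spanning-sum-equals-0} from the Conn--Scheinberg--Vicente textbook), so there is no in-paper argument to compare against. What you have written is essentially the standard textbook proof: the elementary ``absorb negative coefficients by adding a large multiple of the vanishing positive combination'' argument for \ref{thm:pspan-char:pos-spanning-sum-equals-0}, and the polar-cone/separation argument for the converse of \ref{thm:pspan-char:spanning-direction}. Both directions check out, including the small points you flag yourself: the supremum of a linear functional over a cone containing $\0$ is either $0$ or $+\infty$, so strict separation from $y$ forces it to be $0$; and the separation step does require knowing that a finitely generated cone is closed (Minkowski--Weyl or a Carath\'eodory-type argument), which is standard but is the one ingredient that is not pure bookkeeping. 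Your forward direction of \ref{thm:pspan-char:spanning-direction} via $0 < \|v\|^2 = \sum_i \lambda_i (v^\top d_i)$ is also fine, and it is worth observing that it remains valid even if some $d_i = \0$, since such terms simply contribute nothing. In short: correct, self-contained, and consistent with the cited sources; nothing to repair.
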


Now we introduce the cosine measure. Intuitively, it quantifies the largest gap in a set of vectors: a higher value indicates a smaller gap, while a lower value corresponds to a larger gap.

\begin{definition}
    Let $\Sset = \{d_1, \dots, d_k\} \subset \R^n \setminus \{\0\}$. The \textit{cosine measure of} $\Sset$ is 
    $$\cm(\Sset) =  \min_{v \in \R^n, \|v\|=1} \cm (\Sset, v)= \min_{v\in\R^n, \|v\|=1}  \max_{d \in \Sset} \frac{d^\top v}{\|d\|}.$$
    The solution(s) $v \in \R^n$ to this minimax problem are called the \textit{cosine vectors}. The set of all such solutions is the \textit{cosine vector set} $\cv(\Sset)$. If $S$ is a matrix, then $\cm(S)$ refers to $\cm(\Sset)$ where $\Sset$ is the set of all the columns of $S$.  
\end{definition}

\begin{lemma}\cite[Theorem 4.1]{regis_properties_2021}\label{thm:existence-cmp}
    Let $\Sset \not = \varnothing$. Then there exists a minimizer of the cosine measure problem. 
\end{lemma}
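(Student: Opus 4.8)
The plan is to prove this by a straightforward compactness argument: the feasible region of the cosine measure problem is the unit sphere, which is compact, and the objective $v \mapsto \cm(\Sset, v)$ is continuous, so by the extreme value theorem the infimum is attained.

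First I would make the objective rigorous. Since $\Sset \subset \R^n \setminus \{\0\}$ and $\Sset \neq \varnothing$, for every fixed $v$ the quantity $\cm(\Sset, v) = \max_{d \in \Sset} \frac{d^\top v}{\|d\|}$ is a maximum over a finite, nonempty set of real numbers, with no division by zero, hence well-defined and finite. Writing $\Sset = \{d_1, \dots, d_k\}$, each map $v \mapsto \frac{d_i^\top v}{\|d_i\|}$ is linear, hence continuous on $\R^n$; the pointwise maximum of finitely many continuous functions is continuous, so $\cm(\Sset, \cdot)$ is continuous on $\R^n$. Next, the constraint set $\{v \in \R^n : \|v\| = 1\}$ is closed and bounded, hence compact, so by the Weierstrass extreme value theorem the continuous function $\cm(\Sset, \cdot)$ attains its infimum over this set. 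Any point realizing this infimum is a minimizer of the cosine measure problem.

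There is no genuine obstacle here; the result is a routine application of compactness plus continuity. The only subtleties worth flagging are that finiteness of $\Sset$ is what makes the inner max a true maximum and renders the objective continuous (rather than merely semicontinuous), and that $\0 \notin \Sset$ is what keeps the objective finite-valued; both are guaranteed by the standing hypotheses on $\Sset$. Should one wish to relax finiteness, the inner maximum would become a supremum and one would instead argue via lower semicontinuity of $\cm(\Sset,\cdot)$, but that refinement is unnecessary under the stated assumptions.
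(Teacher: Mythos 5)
Your proof is correct: the objective is a pointwise maximum of finitely many linear (hence continuous) functions and the unit sphere is compact, so Weierstrass gives a minimizer. The paper itself offers no proof for this lemma—it simply cites \cite[Theorem 4.1]{regis_properties_2021}—and your argument is the standard one underlying that citation, so nothing more needs to be said.
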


Related to a cosine vector is the active set, that is the subset of elements which make the optimal cosine with the cosine vector.

\begin{definition} Let $\Sset$ be a positive spanning set of $\R^n$ and let $u^* \in \cv (\Sset)$. The \textit{active set} of $\Sset$ with respect to $u^*$, denoted $\activ (\Sset,u^*)$ is defined as 
$$\activ (\Sset,u^*) = \left\{ \frac{d}{\|d\|} \in \R^n : d \in \Sset, \frac{d^\top u^*}{\|d\|} = \cm (\Sset) \right\}.$$
\end{definition}

Motivated by the fact that the cosine vector must make equal an angle with all vectors in the active set, we define the uniform angle subspace to be the set of all vectors that make an equal angle with a given set.
\begin{definition}
\cite[Theorem 5.1]{regis_properties_2021} Let $\Sset=\{d_1, \dots, d_k\} \subset \R^n \setminus \{\mathbf 0\}$ with $k \geq 2$ and let $\theta(v_1,v_2)$ denote the angle between nonzero vectors $v_1$ and $v_2$ in $\R^n$. Define 
$$\Theta(\Sset) = \{0\} \cup \{v \in \R^n | \theta(v, d_1) = \theta(v,d_2) = \dots = \theta(v,d_k)\}.$$
Then, given a set $\Sset$ with at least two nonzero vectors in $\R^n$ we refer to $\Theta(\Sset)$ as the {\em uniform angle subspace.}
\end{definition}

Next we define a minimum cosine cone and specialize it to work with the uniform angle subspace of a set $\Sset$. 
\begin{definition}\label{defn:min-cosine-cone}
    Let $y \in \mathbb R^n$, $y \not = \mathbf 0$, and $-1 \leq \alpha \leq 1$. Define
    $$K^\circ_\alpha(y) = \left\{v \in \mathbb R^n : v \not = \mathbf 0 \text{ and } \frac{v^\top y}{\|v\|\|y\|} > \alpha \right\} \cup \{\mathbf 0\}.$$
\end{definition}
\begin{definition}\label{defn:cone}
    Let $\Sset=\{d_1, \dots, d_k\}\subset \R^n$ be a set of unit vectors such that $\Theta(\Sset) \not = \{\mathbf 0\}$. Let $y \in \Theta(\Sset)$ and $\|y\|=1$. Define $$K^\circ(\Sset,y) = K^\circ_{\alpha_\Sset}(y)$$ where $\alpha_\Sset = d^\top_i y$ for all $i=1, \dots, k$.
\end{definition}

We end the section with a useful theorem that relates the positive spanning property with the cosine measure of a set. 

\begin{theorem}\cite[Theorem 4.2 (iv)]{regis_properties_2021}\label{thm:pos-span-cm>0}
    Let $\Sset = \{d_1, \dots, d_k\} \subset \R^n \setminus \{\mathbf 0\}$. Then $\Sset$ positively spans $\R^n$ if and only if $\cm(\Sset) > 0$.
\end{theorem}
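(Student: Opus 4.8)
The plan is to prove the two directions separately, leaning on the characterization of positive spanning sets in Theorem~\ref{thm:pspan-char}\ref{thm:pspan-char:spanning-direction} and on the existence of a minimizer from Lemma~\ref{thm:existence-cmp}. Throughout, observe that since none of the $d_i$ is the zero vector, each term $d_i^\top v / \|d_i\|$ is well defined, and by homogeneity we may rescale so that every $d_i$ is a unit vector without changing $\cm(\Sset)$ (this uses Proposition~\ref{thm:pos-scale-spanning-set} to see the positive spanning property is also unaffected); I would state this normalization first to streamline the rest.

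For the forward direction, assume $\Sset$ positively spans $\R^n$. By Lemma~\ref{thm:existence-cmp} there is a minimizer $v^*$ with $\|v^*\|=1$, so $\cm(\Sset) = \max_{d\in\Sset} d^\top v^*/\|d\|$. Since $v^* \neq \0$, Theorem~\ref{thm:pspan-char}\ref{thm:pspan-char:spanning-direction} gives an index $i$ with $(v^*)^\top d_i > 0$, hence $\cm(\Sset) \geq d_i^\top v^*/\|d_i\| > 0$. That settles one implication.

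For the converse, I would argue by contrapositive: suppose $\Sset$ does \emph{not} positively span $\R^n$. Then by the negation of Theorem~\ref{thm:pspan-char}\ref{thm:pspan-char:spanning-direction} there exists $v \in \R^n \setminus \{\0\}$ such that $v^\top d_i \leq 0$ for every $i$; normalizing, take $\|v\|=1$. Then $\max_{d\in\Sset} d^\top v/\|d\| \leq 0$, so $\cm(\Sset) \leq 0$. Combined with the trivial bound $\cm(\Sset) \geq 0$ — which follows because for any unit $v$, $-v$ is also a unit vector and $\max_i d_i^\top v/\|d_i\| + \max_i d_i^\top(-v)/\|d_i\| \geq 0$, forcing the minimum over the sphere to be nonnegative — we conclude $\cm(\Sset) = 0$, i.e. $\cm(\Sset)$ is not positive.

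The only genuinely delicate point is the "trivial" lower bound $\cm(\Sset)\ge 0$ used in the converse: one must be slightly careful that it holds for arbitrary $\Sset$ (not assumed spanning), and the symmetric-point argument above does this cleanly. Everything else is a direct application of the quoted characterization and existence results, so I do not expect a substantive obstacle; the write-up is mostly bookkeeping about normalization and signs.
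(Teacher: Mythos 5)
Your overall route is the standard one (note the paper itself does not prove this statement; it is quoted from \cite[Theorem 4.2 (iv)]{regis_properties_2021}): both directions hinge on Theorem~\ref{thm:pspan-char}\ref{thm:pspan-char:spanning-direction} together with the existence of a minimizer from Lemma~\ref{thm:existence-cmp}. Your forward direction is correct, and in the converse the step that actually matters — from a nonzero $v$ with $v^\top d_i \le 0$ for all $i$ you conclude $\cm(\Sset)\le 0$, hence $\cm(\Sset)$ is not positive — already completes the contrapositive.

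However, the extra claim you flag as the ``delicate point'' is false: it is not true that $\cm(\Sset)\ge 0$ for arbitrary $\Sset$. For example, $\Sset=\{e_1\}\subset\R^2$ gives $\cm(\Sset)=-1$ (take $v=-e_1$), and $\Sset=\{e_1,e_2\}\subset\R^2$ gives $\cm(\Sset)=-\tfrac{1}{\sqrt2}$; in general any set contained in an open half-space has negative cosine measure. Your symmetric-point argument only establishes $f(v)+f(-v)\ge 0$ for $f(v)=\max_i d_i^\top v/\|d_i\|$, i.e.\ that for each $v$ at least one of $f(v),f(-v)$ is nonnegative; this does not bound $\min_{\|v\|=1} f(v)$ from below, since the minimum can be attained at a $v$ with $f(v)<0$ and $f(-v)>0$, which is exactly what happens in the examples above. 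So the conclusion $\cm(\Sset)=0$ in the non-spanning case is unjustified (and false in general). Fortunately it is also unnecessary: delete that sentence, keep only ``$\cm(\Sset)\le 0$, hence not positive,'' and your proof of the theorem as stated is complete.
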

\subsection{Gram Matrices} \label{sec:prelim:gram-matrices}

This section examines results related to the Gram matrix, a useful tool for discussing the relationship between sets and cosine measures. For example, it was used in \cite{naevdal_positive_2019} to prove certain sets had optimal cosine measure, in \cite{hare_deterministic_2020} to characterize solutions to the cosine measure problem, and in \cite{hare_nicely_2023} to compute the cosine measure of a specific set.

\begin{definition}
    Let $S = [d_1 \ d_2 \dots d_k] \in \R^{n \times k}$. The \textit{Gram matrix} of $S$, denoted by $\gram (S) \in \R^{k \times k}$, is defined to be $\gram(S) = S^\top S$.
\end{definition}

A special case arises when $S \in \R^{n \times n}$ and is invertible. We define a {\em Gram vector} and {\em Gram value}, which can be derived from the Gram matrix.

\begin{definition}
    Let $\B = \{d_1, \dots, d_n\}\subset \R^n$ be a basis. We define the \textit{Gram vector} $u\in \R^n$ of $\B$ with \textit{Gram value} $\gamma_\B$ to be a unit vector such that $u^\top  d_i = \gamma_\B >0$ for all $i \in \{1, \dots, n\}$.
\end{definition}

The following results prove the existence and uniqueness of the \textit{Gram vector} and \textit{Gram value}, which are derived from the Gram matrix. 
\begin{lemma}\label{thm:reordering}
    Let the columns of $B \in \R^{n\times n}$ form a basis. Then for any orthogonal matrix $P$ we have $$\frac{1}{\sqrt{e^\top \gram(BP)^{-1} e}} = \frac{1}{\sqrt{e^\top P^\top\gram(B)^{-1}P e}}.$$ 
\end{lemma}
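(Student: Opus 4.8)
The plan is to reduce the identity to a one-line algebraic manipulation of Gram matrices, after checking that everything in sight is well-defined. First I would observe that since the columns of $B$ form a basis, $B$ is invertible, and hence $\gram(B) = B^\top B$ is symmetric positive definite; in particular $\gram(B)^{-1}$ exists and is positive definite, so $e^\top \gram(B)^{-1} e > 0$ (as $e \neq \mathbf 0$). Likewise, since $P$ is orthogonal it is invertible, so the columns of $BP$ also form a basis and $\gram(BP)$ is invertible. Thus both sides of the claimed equality are genuine positive real numbers, and it suffices to prove $e^\top \gram(BP)^{-1} e = e^\top P^\top \gram(B)^{-1} P e$.

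Next I would expand the left-hand Gram matrix directly from the definition:
$$\gram(BP) = (BP)^\top (BP) = P^\top B^\top B P = P^\top \gram(B) P.$$
Then I would invert, using that $P$ is orthogonal so $P^{-1} = P^\top$ and $(P^\top)^{-1} = P$:
$$\gram(BP)^{-1} = \left(P^\top \gram(B) P\right)^{-1} = P^{-1} \gram(B)^{-1} (P^\top)^{-1} = P^\top \gram(B)^{-1} P.$$
Substituting this into $e^\top \gram(BP)^{-1} e$ gives exactly $e^\top P^\top \gram(B)^{-1} P e$, which is the desired equality of the radicands; applying $x \mapsto 1/\sqrt{x}$ to both (equal, positive) sides finishes the proof.

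There is no real obstacle here beyond bookkeeping: the only things to be careful about are (i) justifying invertibility of $\gram(B)$ and $\gram(BP)$ so that the inverses and the reciprocal square roots make sense, and (ii) correctly using $P^{-1} = P^\top$ when distributing the inverse over the product $P^\top \gram(B) P$. Everything else is immediate from $\gram(S) = S^\top S$ and the orthogonality of $P$.
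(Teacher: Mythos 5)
Your proposal is correct and follows essentially the same route as the paper's proof: expand $\gram(BP) = P^\top \gram(B) P$ and invert using $P^{-1} = P^\top$. The extra remarks on invertibility and positivity of the radicands are fine bookkeeping but do not change the argument.
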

\begin{proof} Since $P$ is orthogonal, 
    $$e^\top \gram (BP)^{-1} e = e^\top ((BP)^\top BP)^{-1} e\
        = e^\top P^\top (B^\top B)^{-1}P e = e^\top P^\top \gram (B)^{-1} P e.$$
\end{proof}
\begin{theorem}\label{thm:gram-vector}
    Let $B = [d_1 \ d_2 \dots d_n] \in \R^ {n \times n}$ where the columns are linearly independent unit vectors. Let $\B = \{d_1, \dots, d_n\}$ denote the set of columns. Then there exists a unique Gram vector of $\B$ with Gram value
    \begin{align*}
        \gamma_\B = \frac{1}{\sqrt{e^\top \gram (B)^{-1}e}}.
    \end{align*}
\end{theorem}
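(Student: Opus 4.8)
The plan is to translate the defining conditions of a Gram vector into a linear system and solve it explicitly using the invertibility of $B$. A unit vector $u$ is a Gram vector of $\B$ with Gram value $\gamma$ precisely when $B^\top u = \gamma e$ together with $\gamma > 0$ and $\|u\| = 1$. Since the columns of $B$ are linearly independent, $B$ is invertible, so for each fixed $\gamma$ the equation $B^\top u = \gamma e$ has the unique solution $u = \gamma (B^\top)^{-1} e = \gamma (B^{-1})^\top e$.

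Next I would impose the normalization. Computing, $\|u\|^2 = \gamma^2\, e^\top B^{-1}(B^{-1})^\top e = \gamma^2\, e^\top (B^\top B)^{-1} e = \gamma^2\, e^\top \gram(B)^{-1} e$, using $(B^\top B)^{-1} = B^{-1}(B^{-1})^\top$. Because $\gram(B) = B^\top B$ is symmetric positive definite (its quadratic form is $x \mapsto \|Bx\|^2$, which is strictly positive for $x \neq \0$ by linear independence of the columns), so is $\gram(B)^{-1}$, and hence $e^\top \gram(B)^{-1} e > 0$. The constraint $\|u\| = 1$ then reads $\gamma^2 = 1/(e^\top \gram(B)^{-1} e)$, whose only positive root is $\gamma_\B = 1/\sqrt{e^\top \gram(B)^{-1} e}$; the requirement $\gamma > 0$ discards the negative root. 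For existence, I would simply check that $u := \gamma_\B (B^{-1})^\top e$ meets all three conditions: $B^\top u = \gamma_\B e$ yields $u^\top d_i = \gamma_\B$ for every $i$, the choice of $\gamma_\B$ gives $\|u\| = 1$, and $\gamma_\B > 0$ by construction. For uniqueness, any Gram vector $u'$ with value $\gamma'$ must satisfy $u' = \gamma'(B^{-1})^\top e$ and, by the normalization argument, $\gamma' = \gamma_\B$, so $u' = u$.

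There is no serious obstacle here; the statement reduces to solving $B^\top u = \gamma e$ and normalizing. The only points needing care are (a) justifying $e^\top \gram(B)^{-1} e > 0$ so that the displayed formula is well defined, which is exactly positive definiteness of the Gram matrix of a linearly independent family, and (b) using the sign condition $\gamma_\B > 0$ to single out the correct square root, which is what upgrades the solution set from two candidates to a single Gram vector. (Lemma~\ref{thm:reordering} is not needed for this argument; it records an orthogonal-invariance of the quantity $1/\sqrt{e^\top \gram(B)^{-1} e}$ that will presumably be invoked elsewhere.)
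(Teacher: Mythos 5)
Your proof is correct, but it follows a genuinely different route from the paper. You reduce the defining conditions to the linear system $B^\top u = \gamma e$, solve it explicitly as $u = \gamma (B^{-1})^\top e$, and fix $\gamma$ by the normalization $\|u\|=1$, using positive definiteness of $\gram(B)$ (i.e.\ $x \mapsto \|Bx\|^2 > 0$ for $x \neq \0$) to guarantee $e^\top \gram(B)^{-1} e > 0$ and the sign condition $\gamma > 0$ to select the positive root; this yields existence, uniqueness, and the closed-form value in one self-contained computation. The paper instead outsources existence and uniqueness of the Gram vector to \cite[Lemma 1]{naevdal_positive_2019} and spends its proof only on well-definedness of the displayed formula, invoking Lemma~\ref{thm:reordering} to show that $e^\top \gram(B)^{-1} e$ is invariant under reordering of the columns of $B$ (so that a formula written for a matrix makes sense for the set $\B$). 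Your parenthetical observation is accurate: in your argument Lemma~\ref{thm:reordering} is not needed, because the Gram value is intrinsically defined by the set $\B$ and your derivation applies to any matrix whose columns enumerate $\B$, so ordering-invariance of the right-hand side comes for free from uniqueness. What the paper's route buys is brevity and reuse of the literature; what yours buys is a complete, elementary derivation of the formula itself, including the justification that the expression under the square root is strictly positive.
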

\begin{proof}
    The existence and uniqueness of the Gram vector is proven in \cite[Lemma 1]{naevdal_positive_2019}. To see that $\gamma_\B$ is well-defined, it suffices to note that, by Lemma~\ref{thm:reordering}, $u$ and $\gamma_\B$ are invariant under the reordering of columns in the matrix $B$.
\end{proof}

The following results build up to Corollary~\ref{cor:necessary-cond-gram-vector}, which provide necessary conditions for a vector to belong to the cosine vector set. 

\begin{lemma}\cite[Lemma 13]{hare_deterministic_2020}\label{theorem:all-inner-prod-equal-implies-gram}
     Let $\B = \{d_1, \dots, d_n\} \subset \R^n$ be a basis of unit vectors. Suppose $u$ is a unit vector such that $u^\top d_1 = \cdots = u^\top d_n = \alpha >0$. Then $\alpha = \gamma_\B$, where $\gamma_\B$ is defined in Lemma~\ref{thm:gram-vector}.
\end{lemma}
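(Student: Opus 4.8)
The plan is to convert the hypotheses into a single matrix equation, invert it, and then impose the unit-norm condition. First I would write $B = [d_1 \ d_2 \ \dots \ d_n]$ and note that the assumption $u^\top d_1 = \dots = u^\top d_n = \alpha$ is precisely the statement $B^\top u = \alpha e$. Since $\B$ is a basis, $B$ is invertible, so $u = \alpha (B^\top)^{-1} e$.

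Next I would use $\|u\| = 1$. Computing, $1 = u^\top u = \alpha^2\, e^\top B^{-1} (B^\top)^{-1} e = \alpha^2\, e^\top (B^\top B)^{-1} e = \alpha^2\, e^\top \gram(B)^{-1} e$. Because $B$ is invertible, $\gram(B) = B^\top B$ is symmetric positive definite, hence so is $\gram(B)^{-1}$; in particular $e^\top \gram(B)^{-1} e > 0$, so we may solve for $\alpha^2$. Combining this with the hypothesis $\alpha > 0$ yields $\alpha = 1/\sqrt{e^\top \gram(B)^{-1} e}$, which is exactly $\gamma_\B$ as defined in Theorem~\ref{thm:gram-vector}.

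An alternative, even shorter route is to appeal directly to Theorem~\ref{thm:gram-vector}: the vector $u$ is a unit vector satisfying $u^\top d_i = \alpha > 0$ for every $i$, so it meets the definition of the Gram vector of $\B$; by the uniqueness asserted in that theorem, $u$ is the Gram vector, and evaluating the common inner product gives $\alpha = \gamma_\B$. I would likely present the explicit computation as the primary argument and note this uniqueness argument as a brief remark.

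There is no genuine obstacle in this proof; the only point deserving an explicit word is that $e^\top \gram(B)^{-1} e$ is strictly positive, so that taking the square root and dividing are justified — and this follows immediately from the positive definiteness of $\gram(B)^{-1}$.
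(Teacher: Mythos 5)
Your argument is correct. Note that the paper does not prove this lemma itself---it is quoted from \cite[Lemma 13]{hare_deterministic_2020}---and your computation ($B^\top u = \alpha e$, hence $u = \alpha B^{-\top}e$, hence $1 = \|u\|^2 = \alpha^2\, e^\top \gram(B)^{-1} e$, then take the positive square root) is precisely the standard argument behind that citation, with the positive-definiteness remark correctly justifying the division and root. Your alternative one-line route via the uniqueness asserted in Theorem~\ref{thm:gram-vector} is also consistent with how the paper organizes these results, since that uniqueness is imported independently from \cite{naevdal_positive_2019}, so no circularity arises.
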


\begin{lemma} \cite[Corollary 18]{hare_deterministic_2020}\label{thm:active-set-has-basis}
    Let $\Sset = \{d_1, \dots, d_k\} \subset \R^n$ be a positive spanning set of unit vectors and $u^* \in c \mathbf V (\Sset)$. Then $\mathbf A(\Sset, u^*)$ contains a basis of $\R^n$.
\end{lemma}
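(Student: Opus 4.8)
The plan is to prove the statement by contradiction: I will assume that $\mathbf A(\Sset,u^*)$ does \emph{not} linearly span $\R^n$ and then perturb $u^*$ along the unit sphere to strictly decrease $\max_{d\in\Sset} d^\top v$, which contradicts the fact that $u^*$ is a cosine vector (a minimizer of the cosine measure problem). Since every finite subset of $\R^n$ that linearly spans $\R^n$ contains a basis, it suffices to show $\operatorname{span}\big(\mathbf A(\Sset,u^*)\big)=\R^n$.

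First I would fix notation. Because every $d_i$ has unit norm, $\mathbf A(\Sset,u^*)=\{d\in\Sset : d^\top u^*=\cm(\Sset)\}=:A$, and $A\ne\varnothing$ since the maximum over the finite set $\Sset$ defining $\cm(\Sset,u^*)$ is attained; also $\cm(\Sset)>0$ by Theorem~\ref{thm:pos-span-cm>0}. Suppose, toward a contradiction, that $W:=\operatorname{span}(A)\subsetneq\R^n$. Write $u^*=p+q$, where $p$ is the orthogonal projection of $u^*$ onto $W$ and $q\in W^\perp$; then $\|p\|^2+\|q\|^2=1$, and since each $d\in A$ lies in $W$ we have $d^\top q=0$, hence $d^\top p=d^\top u^*=\cm(\Sset)$ for every $d\in A$. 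If $A=\Sset$, then $W=\R^n$ because a positive spanning set linearly spans $\R^n$ by Theorem~\ref{thm:pspan-char}, contradicting $W\subsetneq\R^n$; so $\Sset\setminus A\ne\varnothing$ and $\delta:=\cm(\Sset)-\max\{d^\top u^* : d\in\Sset\setminus A\}>0$.

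The core of the argument then splits on whether $q=\0$. If $q\ne\0$, I would set $v(t)=(u^*+tq)/\|u^*+tq\|$ for $t>0$; since $\|u^*+tq\|^2=\|p\|^2+(1+t)^2\|q\|^2>1$, every active $d$ satisfies $d^\top v(t)=\cm(\Sset)/\|u^*+tq\|<\cm(\Sset)$, while continuity of $t\mapsto d^\top v(t)$ keeps $d^\top v(t)<\cm(\Sset)$ for the finitely many non-active $d$ once $t$ is small, giving $\cm(\Sset,v(t))<\cm(\Sset)$. If instead $q=\0$, then $u^*\in W$; picking a unit vector $z\in W^\perp$ and setting $v(t)=\cos(t)\,u^*+\sin(t)\,z$ yields a unit vector (since $z^\top u^*=0$) with $d^\top v(t)=\cos(t)\,\cm(\Sset)<\cm(\Sset)$ for active $d$, and continuity again handles the non-active $d$ for small $t$. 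In either case $u^*$ is strictly improved, contradicting $u^*\in\cv(\Sset)$, so $\operatorname{span}(A)=\R^n$.

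I do not expect a genuine obstacle here; the only points needing care are verifying that each $v(t)$ remains on the unit sphere and that a single radius $t>0$ works simultaneously for all the finitely many non-active constraints (this is where finiteness of $\Sset$ is used). One could shorten the case analysis by invoking the first-order optimality condition for minimizing the convex, positively homogeneous map $v\mapsto\max_{d\in\Sset} d^\top v$ on the sphere, which forces $u^*$ to be a nonnegative multiple of a convex combination of elements of $A$, hence $u^*\in W$, collapsing to the second case; I would nonetheless keep the elementary two-case perturbation argument so as not to quote a subdifferential formula.
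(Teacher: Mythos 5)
Your argument is correct, and there is nothing in the paper to compare it against line by line: the paper does not prove this lemma at all, but imports it as Corollary~18 of \cite{hare_deterministic_2020}. Your blind proof is a self-contained, elementary alternative to that citation: assuming $W=\operatorname{span}\bigl(\mathbf A(\Sset,u^*)\bigr)\subsetneq\R^n$, you produce a unit vector whose maximal inner product with $\Sset$ is strictly below $\cm(\Sset)$, contradicting optimality of $u^*$. The details check out: since the vectors are unit, the active set is $\{d\in\Sset: d^\top u^*=\cm(\Sset)\}$ and is nonempty; positivity of $\cm(\Sset)$ (Theorem~\ref{thm:pos-span-cm>0}) is exactly what makes both of your constructions strictly decrease the active values (dividing $\cm(\Sset)$ by $\|u^*+tq\|>1$, respectively multiplying by $\cos t<1$); finiteness of $\Sset$ lets one small $t$ handle all non-active constraints simultaneously; and the degenerate possibility $\mathbf A(\Sset,u^*)=\Sset$ is correctly ruled out via Theorem~\ref{thm:pspan-char}~\ref{thm:pos-spanning-sum-equals-0}, since a positive spanning set linearly spans $\R^n$. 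As you note, the two cases could be collapsed (optimality in fact forces $u^*\in W$, so only the rotation case is ever needed), but keeping the explicit $q\neq\0$ case costs nothing and avoids quoting a subdifferential or projection-optimality fact. The only stylistic remark is that the quantity $\delta$ you introduce is never used beyond guaranteeing strict inequality at $t=0$, which continuity already gives; either streamline it away or use it to make the choice of $t$ fully quantitative.
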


\begin{corollary}\label{cor:necessary-cond-gram-vector}
     Let $\Sset=\{d_1, \dots, d_k\} \subset \R^n$ be a positive spanning set of unit vectors and $u^* \in \cv(\Sset)$. Then, there exists a basis $\B \subset \Sset$ such that $u^*$ is the Gram vector of $\B$ with a Gram value equal to $\cm(\Sset)$. 
\end{corollary}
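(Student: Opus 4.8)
The plan is to chain together Theorem~\ref{thm:pos-span-cm>0}, Lemma~\ref{thm:active-set-has-basis}, Lemma~\ref{theorem:all-inner-prod-equal-implies-gram}, and Theorem~\ref{thm:gram-vector} almost mechanically. First, since $\Sset$ positively spans $\R^n$, Theorem~\ref{thm:pos-span-cm>0} gives $\cm(\Sset) > 0$; this strict positivity is exactly what will later be needed to apply the Gram-vector lemma. Next, because $u^* \in \cv(\Sset)$, it is by definition a unit vector, and Lemma~\ref{thm:active-set-has-basis} guarantees that $\activ(\Sset, u^*)$ contains a basis of $\R^n$; call it $\B = \{b_1, \dots, b_n\}$. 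Since every element of $\Sset$ is a unit vector, the normalization $d/\|d\|$ in the definition of the active set is vacuous, so $\activ(\Sset, u^*) \subseteq \Sset$, and hence $\B \subset \Sset$.

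Then I would unpack the membership $b_i \in \activ(\Sset, u^*)$: by the definition of the active set this means $b_i^\top u^* = \cm(\Sset)$ for each $i \in \{1, \dots, n\}$. Thus $u^*$ is a unit vector satisfying $u^{*\top} b_1 = \cdots = u^{*\top} b_n = \cm(\Sset) > 0$. Applying Lemma~\ref{theorem:all-inner-prod-equal-implies-gram} to the basis $\B$ of unit vectors then yields $\cm(\Sset) = \gamma_\B$, the Gram value of $\B$. Finally, the displayed equalities together with $\gamma_\B > 0$ show that $u^*$ satisfies the defining property of the Gram vector of $\B$, and Theorem~\ref{thm:gram-vector} guarantees this Gram vector is unique; hence $u^*$ is the Gram vector of $\B$, with Gram value $\gamma_\B = \cm(\Sset)$, as claimed.

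Because all of the substantive work is carried by the cited lemmas, I do not expect a genuine obstacle here. The only points requiring a moment of care are (i) noting that the active-set definition's normalization collapses since $\Sset$ consists of unit vectors, so the basis extracted from $\activ(\Sset, u^*)$ really does lie inside $\Sset$, and (ii) verifying that the hypothesis $\alpha > 0$ in Lemma~\ref{theorem:all-inner-prod-equal-implies-gram} is met — which is precisely the consequence $\cm(\Sset) > 0$ of the positive spanning assumption.
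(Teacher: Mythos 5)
Your proposal is correct and follows exactly the route the paper takes: its proof is the one-line instruction to combine Lemma~\ref{theorem:all-inner-prod-equal-implies-gram} with Lemma~\ref{thm:active-set-has-basis}, and you have simply filled in the routine details (the active set lies in $\Sset$ because the vectors are unit, and $\cm(\Sset)>0$ by Theorem~\ref{thm:pos-span-cm>0} supplies the positivity hypothesis). No gaps; nothing further to add.
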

\begin{proof}
   {Combine Lemma~\ref{theorem:all-inner-prod-equal-implies-gram} and Lemma~\ref{thm:active-set-has-basis}}. 
\end{proof}

Lastly, to help us compute the inverse of Gram matrices, we use a rank-1 update formula.
\begin{theorem} \cite[Equation (2)]{rank1update}
    Let $A \in \R^{n \times n}$ be an invertible matrix and $u,v \in \R^n$. Then $A+uv^\top$ is invertible if and only if $1 + v^\top A^{-1}u\not = 0$. In particular, 
    \begin{align*}
        (A + uv^\top)^{-1} = A^{-1} - \frac{A^{-1}uv^\top A^{-1}}{1+v^\top A^{-1}u}.
    \end{align*}
\end{theorem}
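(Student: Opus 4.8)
The plan is to handle the ``if and only if'' and the explicit formula together by exploiting the factorization $A + uv^\top = A\left(I + A^{-1}uv^\top\right)$, which is legitimate because $A$ is invertible. Since $\det A \neq 0$, the matrix $A + uv^\top$ is invertible exactly when $I + A^{-1}uv^\top$ is, so it suffices to study a rank-one perturbation of the identity. Throughout I would write $w = A^{-1}u$ and $c = 1 + v^\top w = 1 + v^\top A^{-1}u$.

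First I would dispose of the easier (converse) direction. Suppose $c = 0$, i.e. $v^\top A^{-1}u = -1$. This is nonzero, which forces $u \neq \mathbf 0$ and hence $w = A^{-1}u \neq \mathbf 0$. A direct computation gives $\left(I + wv^\top\right)w = w + w\left(v^\top w\right) = \left(1 + v^\top w\right)w = cw = \mathbf 0$, so $w$ is a nonzero element of the kernel of $I + wv^\top$; thus $I + wv^\top$ is singular, and therefore $A + uv^\top = A\left(I + wv^\top\right)$ is singular as well.

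For the forward direction and the formula, I would simply produce the candidate inverse and verify it by direct multiplication. Assume $c \neq 0$ and set $M = A^{-1} - c^{-1} A^{-1}uv^\top A^{-1}$. Expanding $(A + uv^\top)M$, the term $AA^{-1}$ contributes $I$, while the three remaining terms are each a scalar multiple of the rank-one matrix $uv^\top A^{-1}$ and collect to $\left(1 - c^{-1} - c^{-1}v^\top A^{-1}u\right)uv^\top A^{-1} = c^{-1}\left(c - 1 - v^\top A^{-1}u\right)uv^\top A^{-1} = \mathbf 0$, using $c = 1 + v^\top A^{-1}u$. Hence $(A + uv^\top)M = I$, and since these are square matrices this already shows $M = (A + uv^\top)^{-1}$; in particular $A + uv^\top$ is invertible. (Verifying $M(A + uv^\top) = I$ is symmetric and optional.) An alternative for the invertibility equivalence alone would be the matrix determinant lemma, $\det(A + uv^\top) = \det(A)\left(1 + v^\top A^{-1}u\right)$, but the kernel argument above is more elementary and self-contained.

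There is essentially no obstacle here: the computations are routine bookkeeping with rank-one matrices. The one point that merits a moment's attention is noting at the start of the converse direction that $c = 0$ already forces $u \neq \mathbf 0$, so that the kernel vector $w$ exhibited there is genuinely nonzero. Since this result is standard and quoted from \cite{rank1update}, the proof could reasonably be omitted in the paper, but the verification sketched above is short and complete.
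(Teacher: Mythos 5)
Your proof is correct: the factorization $A+uv^\top = A\left(I + A^{-1}uv^\top\right)$, the exhibition of the nonzero kernel vector $w = A^{-1}u$ when $1 + v^\top A^{-1}u = 0$, and the direct verification that $M = A^{-1} - \frac{A^{-1}uv^\top A^{-1}}{1+v^\top A^{-1}u}$ satisfies $(A+uv^\top)M = I$ (with the one-sided inverse of a square matrix being two-sided) together establish both the equivalence and the formula. The paper gives no proof of this statement at all—it is quoted from the cited reference as the standard Sherman--Morrison formula—so there is nothing to compare against; your short, self-contained verification is the classical argument and is entirely adequate.
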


\section{Reformulation}\label{sec:reformulation}

This section introduces two reformulations of the cosine measure problem. The first reformulation was introduced in \cite{regis_properties_2021} and shows the cosine measure problem can be equivalently formulated as a quadratically constrained linear program. The second reformulation requires that $\Sset$ is a positive spanning set, and shows the cosine measure problem can be written as a convex maximization problem over a polytope. 
\begin{theorem}\cite[Section 8.1]{regis_properties_2021}
Let $\Sset$ be a finite set of unit vectors. The optimization problem
\begin{align}
\begin{split}
    \min_{v\in\R^n, \|v\| = 1} \max_{d \in \Sset}\ d^\top v
\end{split} \label{eqn:CMP}
\end{align}
is equivalent to
\begin{align}
\begin{split}
    \underset{y\in \R^n, z \in \R}{\min} \quad & z  \\
    \text{subject to} \quad &  y^\top  d \leq z , \quad \forall d \in \Sset\\
    & \|y\|^2 = 1
\end{split} \label{eqn:QCLP}
\end{align}
are equivalent.
\end{theorem}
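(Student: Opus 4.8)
The plan is to use the standard epigraph reformulation that replaces an inner maximum over a finite set by a single scalar bound variable. Write $f(v) = \max_{d\in\Sset} d^\top v$ for the inner objective of \eqref{eqn:CMP}, let $p^* = \min_{\|v\|=1} f(v)$ (which exists by Lemma~\ref{thm:existence-cmp}), and let $q^*$ denote the optimal value of \eqref{eqn:QCLP}. The goal is to show $p^* = q^*$, together with a correspondence between the optimal points.

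First I would show $q^* \le p^*$. Take a minimizer $v^*$ of \eqref{eqn:CMP}, so $\|v^*\|^2 = 1$ and $f(v^*) = p^*$. Setting $y = v^*$ and $z = f(v^*)$, we have $y^\top d \le \max_{d'\in\Sset} d'^\top y = f(v^*) = z$ for every $d\in\Sset$, and $\|y\|^2 = 1$, so $(y,z)$ is feasible for \eqref{eqn:QCLP} with objective value $z = p^*$. Hence $q^* \le p^*$.

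Next I would show $p^* \le q^*$. Let $(y,z)$ be any feasible point of \eqref{eqn:QCLP}. Since $\Sset$ is finite and $d^\top y \le z$ for every $d\in\Sset$, taking the maximum over $d$ gives $f(y) = \max_{d\in\Sset} d^\top y \le z$. Because $\|y\|^2=1$, the vector $y$ is feasible for \eqref{eqn:CMP}, so $p^* \le f(y) \le z$. Taking the infimum over all feasible $(y,z)$ yields $p^* \le q^*$.

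Combining the two inequalities gives $p^* = q^*$; moreover the first step shows the optimum of \eqref{eqn:QCLP} is attained at $(v^*, f(v^*))$ for any cosine vector $v^*$, and conversely if $(y,z)$ attains $q^*$ then $f(y) \le z = q^* = p^* \le f(y)$ forces $f(y) = p^*$, so $y$ is a minimizer of \eqref{eqn:CMP}. I do not anticipate any real obstacle: the only points requiring care are invoking Lemma~\ref{thm:existence-cmp} so that the construction in the first step is legitimate, and using finiteness of $\Sset$ so that the maximum defining $f$ is attained and the pointwise constraints $d^\top y \le z$, taken together, are equivalent to the single constraint $f(y)\le z$.
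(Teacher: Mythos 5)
Your argument is correct and complete: the epigraph reformulation (replace the inner finite maximum by a scalar bound $z$ with the constraints $y^\top d \le z$ for all $d\in\Sset$), combined with the two-sided inequality between optimal values and the observation that optimal points correspond, is exactly the standard way to establish the equivalence of \eqref{eqn:CMP} and \eqref{eqn:QCLP}. The paper itself states this theorem as a citation to Regis (Section 8.1) and gives no proof, so there is nothing to contrast with; your write-up, including the use of Lemma~\ref{thm:existence-cmp} for attainment and the finiteness of $\Sset$ to pass between the pointwise constraints and $f(y)\le z$, supplies a valid self-contained proof of the cited result.
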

We call problem \eqref{eqn:QCLP} the quadratically constrained linear program (QCLP) formulation.
\begin{theorem}\label{thm:reformulation}
Let $\Sset$ be a finite, positive spanning set of unit vectors. Then, optimization problem \eqref{eqn:CMP} is equivalent to
\begin{align}
\begin{split}
    \underset{x\in \R^n}{\max} \quad & \|x\|^2  \\
    \text{subject to} \quad & x \in P\\ 
     & P = \{x \in \R^n: x^\top d \leq 1, d\in \Sset\}.
\end{split} \label{eqn:QP-polytope}
\end{align}
\end{theorem}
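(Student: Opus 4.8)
The plan is to set up an explicit correspondence between feasible points of \eqref{eqn:CMP} and of \eqref{eqn:QP-polytope} that rescales the objective in a controlled way, and then to verify that both extrema are attained so that the correspondence transfers optimizers. Write $\mu := \cm(\Sset)$ for the optimal value of \eqref{eqn:CMP}.

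First I would record two structural consequences of the positive spanning hypothesis. Since $\Sset$ positively spans $\R^n$, Theorem~\ref{thm:pos-span-cm>0} gives $\mu > 0$, and Lemma~\ref{thm:existence-cmp} guarantees $\mu$ is attained at some unit vector $v^*$. Moreover, by the spanning-direction characterization in Theorem~\ref{thm:pspan-char}\ref{thm:pspan-char:spanning-direction}, for every $x \in \R^n \setminus \{\0\}$ there is $d \in \Sset$ with $x^\top d > 0$; hence along any nonzero ray $\{tx : t \ge 0\}$ the constraint $x^\top d \le 1$ eventually fails, so the recession cone of $P$ is $\{\0\}$. Since $P$ is a polyhedron containing $\0$ with trivial recession cone, it is a (nonempty) polytope, and therefore $\max_{x \in P}\|x\|^2$ is attained, say at $x^*$. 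Because $P$ contains a nonzero point (namely $v^*/\mu$, as checked below), we have $x^* \neq \0$ and the optimal value of \eqref{eqn:QP-polytope} is positive.

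Next I would establish the core observation: for any unit vector $v$, set $c(v) := \max_{d\in\Sset} d^\top v = \cm(\Sset,v)$; the spanning-direction property forces $c(v) > 0$. Then $x := v/c(v)$ satisfies $x^\top d = d^\top v / c(v) \le 1$ for all $d \in \Sset$, so $x \in P$, and $\|x\|^2 = 1/c(v)^2$. Conversely, for any nonzero $x \in P$, the unit vector $v := x/\|x\|$ has $\cm(\Sset,v) = \max_{d\in\Sset} d^\top x / \|x\| \le 1/\|x\|$. Applying the first map to the cosine vector $v^*$ produces a point of $P$ with squared norm $1/\mu^2$, so the optimal value of \eqref{eqn:QP-polytope} is at least $1/\mu^2$; applying the second map to the maximizer $x^*$ gives a unit vector feasible for \eqref{eqn:CMP} with $\mu \le \cm(\Sset, x^*/\|x^*\|) \le 1/\|x^*\|$, hence $\|x^*\|^2 \le 1/\mu^2$. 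Combining, $\max_{x\in P}\|x\|^2 = 1/\cm(\Sset)^2$, and the two maps are mutually inverse on optimizers: $x$ solves \eqref{eqn:QP-polytope} if and only if $x/\|x\|$ solves \eqref{eqn:CMP}. This is the sense in which the problems are equivalent.

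The main obstacle — and the only place the positive spanning hypothesis is genuinely needed — is the boundedness of $P$: without it the supremum of $\|x\|^2$ could be $+\infty$ and the rescaling argument would not close. Everything else is bookkeeping with the substitutions $v \mapsto v/c(v)$ and $x \mapsto x/\|x\|$, together with the elementary fact that $c(v) > 0$ for every unit vector $v$ when $\Sset$ is positive spanning.
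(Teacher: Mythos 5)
Your argument is correct and follows essentially the same route as the paper: you prove boundedness of $P$ from the spanning-direction characterization (the paper's Lemma~\ref{thm:existence-polytope}), and your substitutions $v \mapsto v/c(v)$ and $x \mapsto \left(x/\|x\|,\,1/\|x\|\right)$ are exactly the feasibility correspondences of Theorem~\ref{thm:both-feasible}, with the positivity $c(v)>0$ coming from Theorem~\ref{thm:pos-span-cm>0}/Theorem~\ref{thm:pspan-char} as in the paper. The only cosmetic difference is that you work directly with \eqref{eqn:CMP} instead of passing through the epigraph form \eqref{eqn:QCLP}, and you make the optimal value $1/\cm(\Sset)^2$ explicit.
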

We call problem \eqref{eqn:QP-polytope} the quadratic program (QP) formulation. Before proving Theorem~\ref{thm:reformulation}, we first prove the existence of a maximizer for problem \eqref{eqn:QP-polytope} and that having $\Sset$ be positive spanning is a necessary condition for the existence of a solution. 
\begin{lemma} \label{thm:existence-polytope}
    Let $\Sset$ be a finite set of unit vectors. If $\Sset$ is a positive spanning set, then $P = \{x \in \R^n: x^\top d \leq 1, d\in \Sset\}$ is a polytope. Moreover, there exists a maximizer for problem \eqref{eqn:QP-polytope}.
\end{lemma}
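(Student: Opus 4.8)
The plan is to establish three things in turn: that $P$ is a nonempty closed polyhedron, that $P$ is bounded, and that $\|x\|^2$ attains its maximum on $P$. The first is immediate: $P$ is the intersection of the finitely many closed half-spaces $\{x \in \R^n : x^\top d \leq 1\}$, $d \in \Sset$, hence a closed polyhedron, and it is nonempty since $\0 \in P$ (because $\0^\top d = 0 \leq 1$ for all $d$).

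The heart of the argument is boundedness, and this is where the positive spanning hypothesis is used. I would consider the function $g(v) = \max_{d \in \Sset} d^\top v$, which is continuous on $\R^n$ as a finite maximum of linear functions, and restrict it to the unit sphere $\{v \in \R^n : \|v\| = 1\}$, which is compact. By Theorem~\ref{thm:pspan-char}\ref{thm:pspan-char:spanning-direction}, for every such $v$ there is an index $i$ with $d_i^\top v > 0$, so $g(v) > 0$ pointwise; by compactness $g$ attains its minimum, hence $\mu := \min_{\|v\|=1} g(v) > 0$. (Since the elements of $\Sset$ are unit vectors, $\mu$ is exactly $\cm(\Sset)$, which is positive by Theorem~\ref{thm:pos-span-cm>0}.) Now take any nonzero $x \in P$ and set $v = x/\|x\|$; then $\|x\|\, g(v) = \max_{d \in \Sset} d^\top x \leq 1$ while $g(v) \geq \mu$, so $\|x\| \leq 1/\mu$. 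Thus $P$ is contained in the closed ball of radius $1/\mu = 1/\cm(\Sset)$, hence bounded, and being a bounded polyhedron it is a polytope.

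Finally, the objective $x \mapsto \|x\|^2$ is continuous and $P$ is compact (closed and bounded), so by the extreme value theorem problem \eqref{eqn:QP-polytope} attains its maximum, which completes the proof.

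I do not anticipate a genuine obstacle here; the one point requiring care is to invoke the correct characterization of positive spanning sets, namely Theorem~\ref{thm:pspan-char}\ref{thm:pspan-char:spanning-direction}, in order to upgrade the qualitative statement ``$\Sset$ positively spans $\R^n$'' to the quantitative bound ``$\max_{d \in \Sset} d^\top v \geq \mu > 0$ on the unit sphere,'' from which the norm estimate $\|x\| \leq 1/\cm(\Sset)$ and hence boundedness follow directly.
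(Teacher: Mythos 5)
Your proof is correct, and it follows the same overall skeleton as the paper's: $P$ is a polyhedron containing $\0$, the positive spanning hypothesis enters through Theorem~\ref{thm:pspan-char}\ref{thm:pspan-char:spanning-direction} to force boundedness, and the extreme value theorem then gives a maximizer. The one place you diverge is the mechanism for boundedness. The paper argues by contradiction: if $P$ were unbounded it would contain a ray $\{\lambda x : \lambda \geq 0\}$ with $x \neq \0$ (a standard fact for closed convex sets containing the origin), and then a single $d \in \Sset$ with $x^\top d > 0$ kills the ray. You instead prove boundedness directly and quantitatively, minimizing $g(v)=\max_{d\in\Sset} d^\top v$ over the compact unit sphere to get $\mu = \cm(\Sset) > 0$ and concluding $P \subseteq \{x : \|x\| \leq 1/\cm(\Sset)\}$. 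Your route costs a small compactness argument but avoids invoking the recession-ray fact and, as a bonus, yields an explicit radius bound $1/\cm(\Sset)$ for $P$ — which is in fact consistent with Theorem~\ref{thm:reformulation}, since the optimal value of \eqref{eqn:QP-polytope} is exactly $1/\cm(\Sset)^2$; the paper's contradiction argument is marginally shorter but purely qualitative. Either version is acceptable.
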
 
\begin{proof}
    It is immediately seen that $P$ is a polyhedron as it is the intersection of finitely many closed half-spaces. Evidently, $\mathbf 0 \in P$ since $d^\top \mathbf 0 = 0 \leq 1$ for all $ d \in \Sset$. If $P$ is unbounded, then there must exist a nonzero $x \in \R^n$ such that $\lambda x \in P$ for all $\lambda \geq 0$.  By Theorem~\ref{thm:pspan-char}~\ref{thm:pspan-char:spanning-direction} there exists some $d \in \Sset$ such that $ x^\top d > 0$. Take $\lambda$ to be sufficiently large such that $\lambda x^\top d > 1$, which contradicts $\lambda x ^\top d \leq 1$. Thus $P$ is compact and by the extreme value theorem the maximum is attained. 
\end{proof}
Recall that, if $f$ is strictly convex, then $x^* \in \argmax_{x \in P}{f}$ implies that $x^*$ is necessarily a vertex of $P$. 
Now we introduce a theorem that relates the feasible solutions of problem \eqref{eqn:QCLP} and problem \eqref{eqn:QP-polytope}.
\begin{theorem}\label{thm:both-feasible} The following hold:
    \begin{enumerate}[label={(\roman*)}]
        \item If $(y,z)$ is feasible for problem $\eqref{eqn:QCLP}$, then $\frac{y}{z}$ is feasible for problem \eqref{eqn:QP-polytope},\label{thm:both-feasible:forward}
        \item If $x \not = \mathbf {0}$ is feasible for problem \eqref{eqn:QP-polytope}, then $\left(\frac{x}{\|x\|},\frac{1}{\|x\|}\right)$ is feasible for problem \eqref{eqn:QCLP}. \label{thm:both-feasible:backward}
    \end{enumerate}
\end{theorem}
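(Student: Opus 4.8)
The plan is to verify each direction by direct substitution into the defining constraints of the two problems, using only elementary manipulations. For part~\ref{thm:both-feasible:forward}, suppose $(y,z)$ is feasible for \eqref{eqn:QCLP}, so $\|y\|^2 = 1$ and $y^\top d \le z$ for all $d \in \Sset$. First I would argue that $z > 0$: since $\Sset$ is a positive spanning set of unit vectors and $y \neq \mathbf 0$ (because $\|y\| = 1$), Theorem~\ref{thm:pspan-char}~\ref{thm:pspan-char:spanning-direction} gives some $d \in \Sset$ with $y^\top d > 0$, hence $z \ge y^\top d > 0$. This justifies dividing by $z$. Then for every $d \in \Sset$, dividing $y^\top d \le z$ by $z > 0$ yields $\left(\tfrac{y}{z}\right)^\top d \le 1$, which is exactly the condition for $\tfrac{y}{z} \in P$, so $\tfrac{y}{z}$ is feasible for \eqref{eqn:QP-polytope}.

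For part~\ref{thm:both-feasible:backward}, suppose $x \neq \mathbf 0$ is feasible for \eqref{eqn:QP-polytope}, so $x^\top d \le 1$ for all $d \in \Sset$. Set $y = \tfrac{x}{\|x\|}$ and $z = \tfrac{1}{\|x\|}$; then $\|y\|^2 = 1$ automatically, and $z > 0$ since $x \neq \mathbf 0$. Multiplying $x^\top d \le 1$ through by the positive scalar $\tfrac{1}{\|x\|}$ gives $y^\top d = \tfrac{x^\top d}{\|x\|} \le \tfrac{1}{\|x\|} = z$ for all $d \in \Sset$, so $(y,z)$ satisfies both constraints of \eqref{eqn:QCLP} and is feasible.

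I do not anticipate a genuine obstacle here; the only subtlety is the sign/nonvanishing of $z$ in part~\ref{thm:both-feasible:forward}, which is where the positive spanning hypothesis is actually used (it is also implicit in part~\ref{thm:both-feasible:backward} only through the standing assumption of the theorem, since there one merely needs $x \neq \mathbf 0$). Once $z > 0$ is established, both directions are one-line rescalings. It may be worth remarking afterward that this correspondence, together with the relation $\|y/z\|^2 = 1/z^2$, is what will let us transfer optimality between \eqref{eqn:QCLP} and \eqref{eqn:QP-polytope} and thereby prove Theorem~\ref{thm:reformulation}.
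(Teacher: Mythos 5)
Your proof is correct and takes essentially the same direct rescaling approach as the paper's proof of this theorem. The only difference is that you explicitly justify $z>0$ in part~(i) via Theorem~\ref{thm:pspan-char}~\ref{thm:pspan-char:spanning-direction}, a point the paper leaves implicit here and only addresses (via Theorem~\ref{thm:pos-span-cm>0}) later in the proof of Theorem~\ref{thm:reformulation}; this is a harmless, indeed slightly more careful, refinement.
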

\begin{proof}
    \ref{thm:both-feasible:forward} Suppose $(y, z)$ is feasible for problem \eqref{eqn:QCLP}. Then for all $d \in \Sset$, 
    \begin{align*}
        \left(\frac{y}{z}\right)^\top d = \frac{1}{z} (y^{\top} d) \leq \left(\frac{1}{z}\right)z = 1
    \end{align*}
    so $\frac{y}{z}$ is feasible for problem $\eqref{eqn:QP-polytope}$. \\ \ref{thm:both-feasible:backward}  Suppose $x \not= \mathbf 0$ is feasible for problem \eqref{eqn:QP-polytope}. Then $\left \|\frac{x}{\|x\|}\right\|^2 = 1 $ and for all $d \in \Sset$
        \begin{align*}
            x^{\top} d \leq 1 \implies \left( \frac{x}{\|x\|}\right)^\top d \leq \frac{1}{\|x\|} 
        \end{align*}
    so $\left(\frac{x}{\|x\|},\frac{1}{\|x\|}\right)$ is feasible for problem \eqref{eqn:QCLP}.
\end{proof}
Finally, we move onto the proof of Theorem~\ref{thm:reformulation}. 
\begin{proof}
$[\eqref{eqn:QCLP} \Rightarrow \eqref{eqn:QP-polytope}]$ Let $(y^*, z^*)$ be an optimal solution of problem \eqref{eqn:QCLP}. Since $\Sset$ is positive spanning, we have $z^* > 0$ by Theorem~\ref{thm:pos-span-cm>0}. Define $x^* = \frac{y^*}{z^*}$. By Theorem~\ref{thm:both-feasible}, $x^*$ is feasible for problem \eqref{eqn:QP-polytope}. To show optimality, let $x \in P$, $x \not = \mathbf 0$ be arbitrarily chosen. Note that by Theorem~\ref{thm:both-feasible}, $\left(\frac{x}{\|x\|},\frac{1}{\|x\|}\right)$ is feasible for problem \eqref{eqn:QCLP}. By the optimality of $z^*$ we have
    \begin{align*}
    z^* \leq \frac{1}{\|x\|} &\implies
    \frac{\|y^*\|}{z^*} \geq \|x\| \implies \left\|\frac{y^*}{z^*}\right\| \geq \|x\|  
    \implies \|x^*\|^2 \geq \|x\|^2 .
    \end{align*}
    If $x= \mathbf 0$, the result is immediate since $\|x^*\|^2 \geq \|x\|^2 = 0$. Thus $x^*$ is an optimal solution for problem \eqref{eqn:QP-polytope}. \\
    $[\eqref{eqn:QP-polytope}\Rightarrow \eqref{eqn:QCLP}]$ Let $x^*$ be an optimal solution of problem \eqref{eqn:QP-polytope}, which exists by Lemma~\ref{thm:existence-polytope}. Note $x^*$ is necessarily nonzero. 
    Define $(y^*,z^*) = \left(\frac{x^*}{\|x^*\|}, \frac{1}{\|x^*\|}\right)$.
    By Theorem~\ref{thm:both-feasible},
    $(y^*,z^*)$ is feasible for problem \eqref{eqn:QCLP}. To show optimality, let $(y,z)$ be an arbitrary feasible solution to problem \eqref{eqn:QCLP}, where we note $z>0$ by Theorem~\ref{thm:pos-span-cm>0}. By Theorem~\ref{thm:both-feasible}, $\frac{y}{z}$ is feasible for problem \eqref{eqn:QP-polytope}. By the optimality of $x^*$,
    \begin{align*}
       \|x^*\|^2 \geq \left\|\frac{y}{z}\right\|^2 \implies \left\|\frac{y^*}{z^*}\right\| \geq \left\|\frac{y}{z}\right\| \implies \frac{1}{z^*} \geq \frac{1}{z} \implies z^* \leq z
    \end{align*}
    so $(y^*,z^*)$ is an optimal solution for problem \eqref{eqn:QCLP}. 
\end{proof}

Observe that the vertices of $P$ are candidate solutions of problem \eqref{eqn:QP-polytope} and that Gram vectors are candidate solutions of the cosine measure problem by Corollary~\ref{cor:necessary-cond-gram-vector}. The following theorem provides a relationship between the two objects.

\begin{theorem} \label{thm:vertex-to-gram}
     Let $\Sset$ be a finite positive spanning set and $$P = \{x \in \R^n : x^\top d \leq 1, d \in \Sset \}.$$ 
    \begin{enumerate}[label={(\roman*)}]
        \item Let $x$ be a vertex of $P$, $\T = \{d \in \Sset: x^\top d = 1\}$, and $\B \subseteq \T$ be a basis of $\R^n$. Then $\frac{x}{\|x\|}$ is a Gram vector of $\B$ with Gram value $\frac{1}{\|x\|}$ and $K^\circ \left(\B,\frac{x}{\|x\|}\right) \cap \Sset = \varnothing$. \label{thm:vertex-to-gram-forward}
        \item Let $\B \subseteq \Sset$ be a basis and $u$ be the Gram vector of $\B$ with Gram value $\alpha$. If $K^\circ \left(\B, u \right) \cap \Sset = \varnothing$, then $\frac{u}{\alpha}$ is the unique vertex where $\left(\frac{u}{\alpha}\right)^\top d = 1$ for all $d \in \B$. \label{thm:vertex-to-gram-backward}
    \end{enumerate}
\end{theorem}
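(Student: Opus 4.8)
The plan is to reduce both implications to a single dictionary entry: for a basis $\B=\{d_1,\dots,d_n\}$ of unit vectors with Gram vector $u$ and Gram value $\alpha$, unwinding Definitions~\ref{defn:min-cosine-cone} and~\ref{defn:cone} shows that a unit vector $d$ lies in $K^\circ(\B,u)$ if and only if $d^\top u > \alpha$, equivalently $d^\top\big(\tfrac{u}{\alpha}\big) > 1$. Thus $K^\circ(\B,u)\cap\Sset=\varnothing$ says exactly that the point $\tfrac{u}{\alpha}$ satisfies $d^\top x\le 1$ for every $d\in\Sset$, i.e.\ it lies in $P$; conversely, a vertex $x$ of $P$ produces such a cone-avoidance statement once we identify $\tfrac{x}{\|x\|}$ as a Gram vector.

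For part (i): a vertex $x$ with $\B\subseteq\T=\{d\in\Sset:x^\top d=1\}$ a basis forces $x\neq\mathbf 0$ (otherwise $\T=\varnothing$), so $u:=\tfrac{x}{\|x\|}$ is well defined and $u^\top d_i=\tfrac{1}{\|x\|}>0$ for each $d_i\in\B$. Lemma~\ref{theorem:all-inner-prod-equal-implies-gram} then identifies $u$ as the Gram vector of $\B$ with Gram value $\gamma_\B=\tfrac{1}{\|x\|}$, which is the first assertion. For the cone claim, with $\alpha_\B=\tfrac{1}{\|x\|}$ a nonzero $v$ lies in $K^\circ(\B,u)$ iff $\tfrac{v^\top u}{\|v\|}>\tfrac{1}{\|x\|}$, i.e.\ $v^\top x>\|v\|$; if some $d\in\Sset$ satisfied this, then $d^\top x>\|d\|=1$, contradicting $x\in P$. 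Hence $K^\circ(\B,u)\cap\Sset=\varnothing$.

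For part (ii): set $x:=\tfrac{u}{\alpha}$ (legitimate since $\alpha=\gamma_\B>0$); by the definition of the Gram vector, $x^\top d=1$ for all $d\in\B$. To verify $x\in P$, fix $d\in\Sset$: if $d\in\B$ then $x^\top d=1$, and otherwise the emptiness of $K^\circ(\B,u)\cap\Sset$ together with $d\neq\mathbf 0$ and $\|d\|=1$ gives $d^\top u\le\alpha$, hence $x^\top d=\tfrac{d^\top u}{\alpha}\le 1$. So $x\in P$, and since the $n$ active constraints $\{d^\top x=1:d\in\B\}$ have linearly independent normals (as $\B$ is a basis), $x$ is a vertex of $P$. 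Uniqueness is immediate: any $x'$ with $d^\top x'=1$ for all $d\in\B$ solves $B^\top x'=e$ with $B=[d_1\ \cdots\ d_n]$ invertible, so $x'=(B^\top)^{-1}e=x$.

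I expect no genuinely hard step here; the only care needed is bookkeeping the normalizations — tracking the factors $\|x\|$ and $\alpha$ through the cone inequality and using that all elements of $\Sset$ (hence of $\B$) are unit vectors — and the appeal, in part (ii), to the standard fact that a feasible point of a polyhedron in $\R^n$ at which $n$ constraints with linearly independent normals are active is a vertex.
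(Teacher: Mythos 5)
Your proposal is correct and follows essentially the same route as the paper's proof: identify $\tfrac{x}{\|x\|}$ as the Gram vector via Lemma~\ref{theorem:all-inner-prod-equal-implies-gram}, translate the cone condition into the inequalities $d^\top u \le \alpha$ (equivalently feasibility of $\tfrac{u}{\alpha}$ in $P$), and conclude vertexhood from the $n$ tight linearly independent constraints. The only cosmetic difference is that you argue uniqueness explicitly through the invertible system $B^\top x' = e$, whereas the paper simply invokes the uniqueness of the Gram vector; both are fine.
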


\begin{proof}
\ref{thm:vertex-to-gram-forward} Let $x$ be a vertex of $P$ and $\B \subseteq \T$ be a basis of $\R^n$. Observe that $d^\top \left(\frac{x}{\|x\|}\right) = \frac{1}{\|x\|}$ for all $d \in \B$, so by Lemma~\ref{theorem:all-inner-prod-equal-implies-gram}, $\frac{x}{\|x\|}$ is the Gram vector of $\B$ with Gram value $\frac{1}{\|x\|}$. Moreover, since $d^\top \left(\frac{x}{\|x\|}\right) \leq \frac{1}{\|x\|}$ for all $d \in \Sset$, by definition we have $K^\circ \left(\B, \frac{x}{\|x\|}\right) \cap \Sset = \varnothing$.\\
\ref{thm:vertex-to-gram-backward} Let $\B \subseteq \Sset$ be a basis and $u$ be the Gram vector of $\B$ with Gram value $\alpha$ and $K^\circ \left(\B, u \right) \cap \Sset = \varnothing$. Suppose $\frac{u}{\alpha} \not \in P$. Then there exist $d \in \Sset \setminus \B$ such that 
\begin{align*}
    d^\top \left(\frac{u}{\alpha}\right) > 1 \implies d^\top u > \alpha \implies d \in K^\circ (\B, u) \cap S \not = \varnothing,
\end{align*}
so we can conclude $\frac{u}{\alpha} \in P$. Moreover, $\left(\frac{u}{\alpha}\right) ^\top d = \frac{\alpha}{\alpha}= 1$ for all $d \in \B$, so we have $n$ tight linearly independent constraints at $\frac{u}{\alpha}$. Thus we can conclude $\frac{u}{\alpha}$ is a vertex of $P$. The uniqueness of the vertex $\frac{u}{\alpha}$ follows directly from the uniqueness of the Gram vector $u$ with respect to the basis $\B$. 
\end{proof}

\begin{remark}
    We note that in \cite{mangasarian_norm-complexity_1986} it has been shown that the norm-maximization problem over an arbitrary polyhedron is NP-hard. However, the complexity of norm-maximization over the specific polytope defined by $P = \{x \in \R^n: x^\top d \leq 1, d\in \Sset\}$ where $\Sset$ is a positive spanning set is unknown. Recently in \cite{sphericalDiscJones2020}, the cosine measure problem was shown to be NP-hard. Since we have shown equivalence between the two problems, we can conclude that the complexity of norm-maximization over $P$ is also NP-hard. 
\end{remark} 
\section{Algorithms}\label{sec:algos}

In this section, we present the algorithms used to solve the cosine measure problem. We restrict ourselves to positive spanning sets, as it was shown in \cite[Lemma 33] {audet2024cosinemeasurerelativesubspace} that if the set is not positive spanning, the cosine measure problem can be formulated as a second-order cone program, for which there are efficient solvers.

Section $\ref{subsec:algos-basis-enumeration}$ describes an enumerative approach, from \cite{hare_deterministic_2020}, which searches all bases and constructs the corresponding Gram vectors. Section $\ref{subsec:kkt-enumeration}$ describes an enumerative approach, from \cite{regis_properties_2021}, which exhaustively searches all Karush-Kuhn-Tucker (KKT) points. The next three sections present new methods based on the reformulation from Section~\ref{sec:reformulation}. Section~\ref{sec:algo-branch-and-bound} describes how branch-and-bound can be used, Section~\ref{sec:vertex-enumeration} describes an exhaustive approach which enumerates the vertices of a polytope, and Section~\ref{sec:randomized-linear-programs} describes an approach which solves a number of randomized linear programs.

\subsection{Basis Enumeration}\label{subsec:algos-basis-enumeration}
Work done in \cite{hare_deterministic_2020} presented a deterministic method to compute a global solution of the cosine measure problem for finite positive spanning sets. This is due to the insight from Corollary~\ref{cor:necessary-cond-gram-vector}, which says the cosine vector is necessarily a Gram vector of some basis contained in the positive spanning set.
{A natural solution is to enumerate all bases, compute the associated Gram vectors, and return the one yielding the smallest objective value.}

        



        
        


Details for this method can be found in \cite[Algorithm 1]{hare_deterministic_2020}, while \cite[Theorem 19]{hare_deterministic_2020} proves that the method returns the exact value of the cosine measure. 

\subsection{KKT Point Enumeration}\label{subsec:kkt-enumeration}

In a similar fashion, work done in \cite{regis_properties_2021} presents a framework to solve the cosine measure problem for arbitrary finite sets. This is done by enumerating all subsets of size less than or equal to $n$ and checking if the unit vector belonging to the uniform angle subspace generated by the subset is a Karush-Kuhn-Tucker (KKT) point. 

{The proposed outline in \cite{regis_properties_2021} for calculating the cosine measure were based on the derived necessary and sufficient conditions of a KKT point. We refer the reader to the paper for details.
It should be noted that it was not proven that a global minimizer necessarily satisfies the KKT conditions, however, in practice, we have found the algorithm to return the minimizer in all cases. As \cite{regis_properties_2021} does not contain detailed pseudo-code, we provide Algorithm~\ref{alg:kkt-enumeration-regis}.}

\begin{algorithm}[H]\caption{KKT points enumeration algorithm}\label{alg:kkt-enumeration-regis}
    \DontPrintSemicolon
        \KwIn{ Finite set $\Sset$ of $\R^n$ containing $k$ vectors}
        
\For{$m = 2$ \textbf{to} $n+1$}{
    \ForAll{\text{\upshape subsets} $\T = \{d_1, \dots, d_m\} \subseteq \Sset$}{
        
        
        \uIf{$\rank(T)=n$}{ 
        
            $\beta \in \pspan(\T) \cap \Theta(\T)$ 
        
            $x \leftarrow \frac{T \beta }{\| T \beta \|}$
        
            \lIf{$K^\circ(\T,x) \cap \Sset = \varnothing$}{
                $\gamma^1_\T = x^\top d_1$, $u^1_{\T} = x$}
            \lIf{$K^\circ(\T,-x) \cap \Sset = \varnothing$}{
$\gamma^2_\T = -x^\top d_1$, $u^2_{\T} = -x$}
        }\ElseIf{$\rank\left(\begin{bmatrix}T^\top & -e\end{bmatrix}^\top\right) = n$ \textbf{\upshape and} $0 \in \conv(\T)$}{
        
            $x \in \T^\perp$ by SVD

            \lIf{$K^\circ(\T,x) \cap \Sset = \varnothing$}{$\gamma^3_\T = 0$, $u^3_{\T} = x$}
        
            \lIf{$K^\circ(\T,-x) \cap \Sset = \varnothing$}{$\gamma^4_\T = 0$, $u^4_{\T} = -x$}
        }
    }
}

\textbf{return}

\quad \ $\min_{\T \subseteq \Sset, i \in \{1,2,3,4\}} \gamma^i_\T$ : the exact cosine measure\\
\quad \ $\{u^i_\T: \gamma^i_\T = \cm(\Sset)\}$ : the cosine vector set
\end{algorithm}

\subsection{Furthest Vertex via Branch-and-Bound}\label{sec:algo-branch-and-bound}

The first approach to using reformulation \eqref{eqn:QP-polytope} is to apply an off-the-shelf branch-and-bound method, which provides a global solution to non-convex nonlinear problems. These methods are widely available and offered by many commercial solvers, such as CPLEX \cite{CPLEX} and Gurobi \cite{gurobi}. In our case, we employ Gurobi V12.0.1 to perform our benchmarking. 

\subsection{Vertex Enumeration}\label{sec:vertex-enumeration}
The next method to exploit reformulation \eqref{eqn:QP-polytope} is to leverage the fact that the solution must lie at a vertex of $P$. This gives rise to Algorithm~\ref{alg:vertex-enumeration}.

\begin{algorithm}[H]\caption{Furthest Vertex via Vertex Enumeration Algorithm}\label{alg:vertex-enumeration}
    \DontPrintSemicolon
        \KwIn{Finite positive spanning set $\Sset$ of $\R^n$ containing $k$ vectors}

        Enumerate the vertices of $P = \{x:x^\top d \leq 1, \forall d \in \Sset\}$

        $v^* \in \argmax\{\|v\|: v \text{ is a vertex of } P\}$ 
        
        \textbf{return}

        \quad \ $\frac{1}{\|v^*\|}$ : the exact cosine measure\\
        \quad \  $\left\{\frac{v}{\|v\|}: v \text{ is a vertex of } P, \|v\| = \|v^*\|\right\}$ : the cosine vector set
\end{algorithm}

Comparing Algorithm~\ref{alg:vertex-enumeration} to the basis enumeration of Section \ref{subsec:algos-basis-enumeration}
, notice that basis enumeration considers all $\B \subset\Sset$ where $\B$ is a basis. Under our reformulation, this can be interpreted as exhaustively enumerating the extreme points generated by all possible bases $\B$. However, not all extreme points generated by $\B$ are belong to the set $P$, so computations are wasted on points that can never be the solution. 

With this observation in mind, we employ algorithms specialized in enumerating the vertices of a polytope most efficiently. In particular, we consider the lexicographical reverse search method presented in \cite{Avis2000-LRS-lexicographical-reverse-search}.  This approach enumerates only the bases that correspond to the vertices of $P$, ensuring that all considered points are feasible. 

We remark that to solve the cosine measure problem, we are agnostic to the algorithm we choose to enumerate the vertices. However, the number of vertices is exponential with respect to the dimension of the polytope, so this approach is still exhaustive and has exponential complexity. 

\subsection{Furthest Vertex via Randomized Linear Programs}\label{sec:randomized-linear-programs}
Using the knowledge that the solution to reformulation \eqref{eqn:QP-polytope} necessarily lies at a vertex and that solving linear programs (LP) is computationally cheap, we propose the following heuristic. 

We sample a random direction $c$ uniformly on the unit sphere and then solve the following LP,
\begin{align} \label{LP:random-LP}
    \begin{split}
    \max c^\top x \ \ \text{ subject to }\ \ x \in P,
    \end{split}
\end{align}
the solution of which is necessarily some vertex of $P$. We repeat this process many times and return the vertex with the maximum norm. This gives rise to the following algorithm. 

\begin{algorithm}[H]\caption{Random LPs}\label{alg:random-lp}
    \DontPrintSemicolon
        \KwIn{Finite positive spanning set $\Sset$, number of iterations $k$ }
        $v=\0$
        
        \For{$i=1, \dots, k$}{
            Uniformly sample $c \in \{x:\|x\|=1\}$

            Let $\bar v$ be the solution to LP \eqref{LP:random-LP}

            \lIf{$\|\bar v\| > \| v\|$}{
                $v=\bar v$
            }
        }
        \textbf{return}
        
        \quad \ $\frac{1}{\|v\|}$ : an approximation of the cosine measure \\
        \quad \ $\frac{v}{\|v\|}$ : a vector that provides the approximate cosine measure
\end{algorithm}
We now prove that with probability 1, if run {\em ad infinitum}, then Algorithm~\ref{alg:random-lp} will return the exact optimal solution. We first require two lemmas.

\begin{lemma} \label{lemma:basis-existence-to-vertex}
    Let $\Sset$ be a positive spanning set and $$P = \{x\in \R^n: x^\top d \leq 1, d \in \Sset\}.$$ Let $v$ be a vertex of P. Then there exists a basis $\B \subset \Sset$ of $\R^n$ such that for all $c \in \pspan(\B)$ where $\|c\|=1$, the solution to the linear program 
    \begin{align} \label{eq:random-lp-lemma}
        \max c^\top x \text{ subject to } x\in P
    \end{align}
    is the vertex $v$.
\end{lemma}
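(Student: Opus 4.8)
The plan is to use the characterization of vertices in terms of tight constraints and the Gram vector machinery developed earlier. Let $v$ be a vertex of $P$. Since $v$ is a vertex, the set $\T = \{d \in \Sset : v^\top d = 1\}$ must contain $n$ linearly independent vectors, so we may choose a basis $\B \subseteq \T$ of $\R^n$. By Theorem~\ref{thm:vertex-to-gram}\ref{thm:vertex-to-gram-forward}, $\frac{v}{\|v\|}$ is the Gram vector of $\B$ with Gram value $\frac{1}{\|v\|}$, and moreover $K^\circ\!\left(\B, \frac{v}{\|v\|}\right) \cap \Sset = \varnothing$; equivalently, $d^\top v \le 1$ for every $d \in \Sset$ with equality forced (among the elements of $\B$) exactly on $\B$. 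I claim this $\B$ is the basis we want.

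Next I would fix an arbitrary $c \in \pspan(\B)$ with $\|c\| = 1$ and show the LP \eqref{eq:random-lp-lemma} is maximized at $v$. Write $c = \sum_{d \in \B} \mu_d\, d$ with all $\mu_d \ge 0$ and not all zero (this is exactly what $c \in \pspan(\B)$ means). For any feasible $x \in P$ we have $x^\top d \le 1$ for each $d \in \B$, hence
\begin{align*}
    c^\top x = \sum_{d \in \B} \mu_d\, (d^\top x) \le \sum_{d \in \B} \mu_d = c^\top v,
\end{align*}
where the last equality uses that $d^\top v = 1$ for all $d \in \B$ (since $\B \subseteq \T$). So $v$ attains the maximum. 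For uniqueness of the maximizer: if $x \in P$ also attains $c^\top x = \sum_{d\in\B}\mu_d$, then $\sum_{d \in \B}\mu_d(1 - d^\top x) = 0$ with each summand nonnegative, so $d^\top x = 1$ for every $d \in \B$ with $\mu_d > 0$. This alone need not pin down $x$ unless enough $\mu_d$ are strictly positive, so the cleanest route is to restrict attention to $c$ in the \emph{interior} of $\pspan(\B)$ relative to the subspace it spans — but since $\B$ is a basis, $\pspan(\B)$ is full-dimensional, and for $c$ with all $\mu_d > 0$ the conditions $d^\top x = 1$ for all $d \in \B$ give $n$ linearly independent equations, forcing $x = v$. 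The measure-zero boundary of $\pspan(\B)$ (where some $\mu_d = 0$) can be absorbed into the probability-$1$ statement later; alternatively one argues that even on the boundary the LP solution returned by a vertex-pivoting solver is a vertex with $c^\top x = c^\top v$, and the norm check in Algorithm~\ref{alg:random-lp} only cares that \emph{some} optimal vertex has been reached infinitely often.

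The main obstacle I anticipate is precisely this uniqueness-versus-degeneracy issue: for $c$ on a facet of the cone $\pspan(\B)$, the LP may have multiple optimal vertices and a solver might return one other than $v$. The right fix for the lemma as stated is to observe that $\pspan(\B)$ is full-dimensional, so it has nonempty interior, and for every $c$ in that interior the argument above forces the maximizer to be exactly $v$; one should then state (or it is understood from the ``$\|c\|=1$'' phrasing together with how the lemma is used) that the relevant set of directions is this full-dimensional cone, whose spherical measure is positive. Everything else is the routine inequality chain above plus an invocation of Theorem~\ref{thm:vertex-to-gram}.
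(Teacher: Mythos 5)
Your proposal is correct and follows essentially the same argument as the paper: choose a basis $\B$ among the constraints tight at $v$, write $c=\sum_{d\in\B}\mu_d d$ with $\mu_d\ge 0$, and sum the constraints $d^\top x\le 1=d^\top v$ to get $c^\top x\le c^\top v$ (the detour through Theorem~\ref{thm:vertex-to-gram} is unnecessary but harmless). The degeneracy caveat you raise for boundary directions (some $\mu_d=0$) is a genuine subtlety that the paper's own proof glosses over when it asserts a strict inequality, but as you note it is immaterial: only the fact that $v$ is \emph{an} optimizer for every $c\in\pspan(\B)$, together with uniqueness on the full-dimensional interior, is needed for Theorem~\ref{thm:random-lp-convergence}.
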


\begin{proof}
By the definition of a vertex, there exists a basis $\B$ such that $$\B = \{d_1,\dots, d_n\} \subseteq\{d \in \Sset: d^\top v = 1\} .$$ Let $c \in \R^n$ with $c \in \pspan(\B)$ and $\|c\|=1$. Then we may write
\begin{align*}
    c = \lambda_1 d_1 + \dots + \lambda_n d_n    
\end{align*}
with $\lambda_i \geq 0$. Observe that 
\begin{align*}
    c^\top v 
    = \lambda_1 (d_1^\top v) + \dots + \lambda_n (d_n^\top v) 
    = \lambda_1 + \dots + \lambda_n.
\end{align*}
Now let $x \in P \setminus \{v\}$ be arbitrarily chosen. Since $\B$ is a basis, $v$ is the unique vector such that $v^\top d = 1$ for all $d \in \B$. Thus, there exists some $i \in \{1, \dots, n\}$ such that $d_i^\top x < 1$. Hence,
\begin{align*}
    c^\top x = \lambda_1(d^\top_1 x) + \dots + \lambda_n(d^\top_n x) 
    < \lambda_1 + \dots + \lambda_n 
\end{align*}
Thus we conclude that $v$ is a maximizer for \eqref{eq:random-lp-lemma}.
\end{proof}

\begin{lemma}\label{lemma:pos-span-non-empty-interior}
    Let $\B = \{d_1, \dots, d_n\}$ be a basis of $\R^n$ and  
    $T = \pspan(\B) \cap B_1(\0)$.
    Then $\interior(T) \not = \varnothing$.
\end{lemma}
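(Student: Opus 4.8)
The plan is to exhibit a single explicit point lying in $\interior(T)$. The key observation is that $\pspan(\B)$ is a full-dimensional (``solid'') simplicial cone precisely because $\B$ is a basis. Concretely, first I would consider the linear map $L\colon \R^n \to \R^n$ defined by $L(\lambda_1,\dots,\lambda_n) = \lambda_1 d_1 + \cdots + \lambda_n d_n$. Since $\{d_1,\dots,d_n\}$ is a basis, $L$ is an invertible linear map, hence a homeomorphism of $\R^n$ onto itself, and in particular it is an open map.

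Next I would let $O = \{\lambda \in \R^n : \lambda_i > 0 \text{ for all } i\}$ denote the open positive orthant. Then $O$ is open and nonempty, so $L(O)$ is open and nonempty, and by the definition of the positive span we have $L(O) \subseteq \pspan(\B)$. Thus $\pspan(\B)$ has nonempty interior, and $\interior(\pspan(\B)) \supseteq L(O)$. Now fix the point $q = L(e) = d_1 + \cdots + d_n$, which lies in $L(O)$. Because $O$ is a cone, $tq = L(te) \in L(O)$ for every $t > 0$; choosing $t > 0$ small enough that $\|tq\| < 1$, we obtain a point $p := tq$ with $p \in L(O) \subseteq \interior(\pspan(\B))$ and $p$ in the open unit ball, hence $p \in \interior(B_1(\0))$. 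Since the interior of an intersection contains the intersection of the interiors, $p \in \interior(\pspan(\B)) \cap \interior(B_1(\0)) \subseteq \interior\big(\pspan(\B) \cap B_1(\0)\big) = \interior(T)$, so $\interior(T) \neq \varnothing$.

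I do not anticipate a genuine obstacle here; the only point that requires a word of care is the claim that $\pspan(\B)$ is full-dimensional, and that is exactly what the openness of the invertible linear map $L$ delivers. (If one prefers to avoid invoking openness of $L$ abstractly, the same conclusion follows by noting that the ball $B_\delta(p)$ around $p = t\sum_i d_i$ is contained in $\pspan(\B)$ for $\delta$ small, since writing any nearby point $y$ in the basis $\B$ perturbs the coordinates $t,\dots,t$ by an amount controlled by $\|y-p\|$ via the bounded linear map $L^{-1}$, keeping all coordinates positive.)
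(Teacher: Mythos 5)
Your proposal is correct and follows essentially the same route as the paper, whose proof is just the one-line observation that $\B$ being a basis forces $\interior(\pspan(\B)) \neq \varnothing$ and hence $\interior(T) \neq \varnothing$. You simply fill in the details the paper leaves implicit (openness of the invertible linear map $L$ applied to the positive orthant, and scaling the point $d_1+\cdots+d_n$ into the unit ball so that it lies in $\interior(\pspan(\B)) \cap \interior(B_1(\0)) \subseteq \interior(T)$), which is a sound elaboration rather than a different argument.
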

\begin{proof}
    As $\B$ is a basis, $\interior(\pspan(\B)) \neq \varnothing$, hence  $\interior(T)\neq \varnothing$. 
\end{proof}
We now present the proof of convergence of Algorithm~\ref{alg:random-lp}.
\begin{theorem}\label{thm:random-lp-convergence}
    With probability $1$, when run ad infinitum, Algorithm~\ref{alg:random-lp} finds a minimizer. 
\end{theorem}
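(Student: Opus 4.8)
The plan is to reduce everything to the QP reformulation \eqref{eqn:QP-polytope} and then argue that, with probability one, the random direction $c$ eventually lands in a region of the unit sphere for which the associated linear program \eqref{LP:random-LP} returns an optimal vertex of $P$.

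First I would invoke Theorem~\ref{thm:reformulation} together with Lemma~\ref{thm:existence-polytope} to obtain a maximizer $x^*$ of $\|x\|^2$ over $P = \{x \in \R^n : x^\top d \le 1,\ d \in \Sset\}$; since $x \mapsto \|x\|^2$ is strictly convex and $P$ is a polytope, $x^*$ must be a vertex of $P$, and $x^* \neq \0$. By the equivalence established in Theorem~\ref{thm:reformulation} (via Theorem~\ref{thm:both-feasible}), it then suffices to show that the stored iterate $v$ in Algorithm~\ref{alg:random-lp} equals a maximizer of \eqref{eqn:QP-polytope} with probability one, since in that case $\tfrac{1}{\|v\|} = \cm(\Sset)$ and $\tfrac{v}{\|v\|} \in \cv(\Sset)$.

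Next I would apply Lemma~\ref{lemma:basis-existence-to-vertex} with $v := x^*$ to obtain a basis $\B \subset \Sset$ such that every unit vector $c \in \pspan(\B)$ makes $x^*$ the unique solution of LP \eqref{LP:random-LP}. The key geometric observation is then that $\pspan(\B)$, being the conical hull of a basis, is a full-dimensional cone; by Lemma~\ref{lemma:pos-span-non-empty-interior} it has nonempty interior, and since the interior of a convex cone is again a cone, $\interior(\pspan(\B)) \cap \{x : \|x\| = 1\}$ is a nonempty relatively open subset of $S^{n-1}$ and therefore has strictly positive normalized surface measure, say $p > 0$. Because the directions $c$ are sampled i.i.d. uniformly on the sphere, the probability that none of the first $k$ samples lies in this set is $(1-p)^k$, and $\lim_{k \to \infty}(1-p)^k = 0$ (equivalently, the second Borel--Cantelli lemma gives that infinitely many samples land in the set almost surely); hence with probability one some sampled $c_i \in \interior(\pspan(\B))$, at which iteration the LP returns $x^*$ and the algorithm updates $v$ accordingly.

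Finally I would close the argument: once $x^*$ has been returned by some LP, the stored iterate $v$ satisfies $\|v\| \ge \|x^*\|$ for the remainder of the run, while $\|v\| \le \|x^*\|$ always holds because $v$ is feasible for $P$ and $x^*$ maximizes the norm over $P$; thus $\|v\| = \|x^*\|$, so $v$ is itself a maximizer of \eqref{eqn:QP-polytope}, and translating back through Theorem~\ref{thm:reformulation} yields the claim. I expect the main obstacle to be the measure-theoretic step, namely converting ``$\pspan(\B)$ has nonempty interior'' into ``$\{c : \|c\|=1,\ c \in \pspan(\B)\}$ has positive probability under the uniform distribution on $S^{n-1}$''; this should be handled cleanly by picking $w \in \interior(\pspan(\B))$ with $w \neq \0$, noting $w/\|w\|$ lies in the relatively open set $\interior(\pspan(\B)) \cap S^{n-1}$, and using that a nonempty relatively open subset of the sphere has positive spherical measure.
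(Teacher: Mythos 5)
Your proposal is correct and follows essentially the same route as the paper's proof: existence of a maximizer via Lemma~\ref{thm:existence-polytope}, the basis/cone from Lemma~\ref{lemma:basis-existence-to-vertex}, nonempty interior via Lemma~\ref{lemma:pos-span-non-empty-interior}, and the positive-probability sampling argument. The paper's proof is just terser, leaving the Borel--Cantelli/monotone-update bookkeeping implicit, which you spell out.
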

\begin{proof}
    Let $v^*$ be a maximizer for problem \eqref{eqn:QP-polytope}, which exists by Lemma~\ref{thm:existence-polytope}. By Lemma~\ref{lemma:basis-existence-to-vertex}, there exists a basis $\B \subset \Sset$ such that for all $c \in \pspan(\B)$ the solution to LP $\eqref{LP:random-LP}$
    Since $\interior(\pspan(\B)\cap B_1) \neq \varnothing$,
    when $c$ is uniformly sampled on $S^{n-1}$, there is a constant nonzero probability of $c \in \pspan(\B)$. Consequently, on each iteration there is a constant nonzero probability that the solution to \eqref{LP:random-LP} is $v^*$, and the result follows.
\end{proof}

\section{Test Set}\label{sec:testset}
This section presents a collection of sets that can be used to benchmark the performance of algorithms. {In particular, we focus on the construction of sets that are positive bases
and proving the value of the cosine measure of each set. We say $\Pset$ is a positive basis if it is positive spanning and for all $d \in \Pset$, we have $d \not \in \pspan(\Pset \setminus \{d\})$. A positive basis is minimal if $|\Pset|=n+1$ and maximal if $|\Pset| = 2n$.}

Section~\ref{sec:transformations} introduces two theorems used to transform sets while preserving the cosine measure. {Sections~\ref{sec:minimal-delta-angle-pb} and~\ref{sec:maximal-delta-angle-pb} cover the construction of the minimal and maximal positive bases respectively. Lastly, Section~\ref{sec:intermediate-pbasis} and Section~\ref{sec:random-pss} overview results relating to optimal orthogonal positive bases and random positive spanning sets.}

\subsection{Transformations} \label{sec:transformations}

This section presents transformations that can be applied to a set without changing the cosine measure. These transformations are applied to ensure robustness and to increase the complexity of our test set. In this section, when we consider a set $\Sset = \{d_1, \dots, d_k\}$, we use the fact that it can be represented interchangeably as a matrix $S = [d_1 \dots d_k]$, where each column corresponds to a vector in the set.

\begin{theorem} (Cosine Measure is Rotation and Permutation Invariant) \cite[Theorem 5.1]{regis_properties_2021} \label{thm:rotation-invariant}
    Let $\Sset = \{d_1, \dots, d_k\} \subset \R^n\setminus\{\mathbf 0\}$ and $S$ to be the matrix constructed using the vectors in $\Sset$ as columns. Let $P = \alpha Q$ where $Q \in \R^{n \times n}$ is an orthogonal matrix and $\alpha \not = 0$ is a scalar. Then $\cm(P S) = \cm (S)$. 
\end{theorem}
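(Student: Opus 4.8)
The plan is to argue directly from the definition $\cm(S)=\min_{\|v\|=1}\max_{d\in\Sset}\frac{d^\top v}{\|d\|}$, tracking how each ingredient of this minimax transforms when the columns of $S$ are replaced by the columns of $PS=\alpha Q S$. First I would note that the columns of $PS$ are precisely $\alpha Q d_1,\dots,\alpha Q d_k$, each nonzero since $\alpha\neq 0$ and $d_i\neq\mathbf 0$, so $PS$ is still a legitimate input to $\cm(\cdot)$. Because $Q$ is orthogonal, $\|\alpha Q d_i\|=|\alpha|\,\|Q d_i\|=|\alpha|\,\|d_i\|$, so the normalizing denominators change only by the common positive factor $|\alpha|$.

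Next I would compute the normalized inner product for an arbitrary unit vector $v$:
\[
\frac{(\alpha Q d_i)^\top v}{\|\alpha Q d_i\|}\;=\;\frac{\alpha\, d_i^\top Q^\top v}{|\alpha|\,\|d_i\|}\;=\;\frac{d_i^\top\!\left(\mathrm{sign}(\alpha)\,Q^\top v\right)}{\|d_i\|}.
\]
The crucial observation is that the map $v\mapsto w:=\mathrm{sign}(\alpha)\,Q^\top v$ is an orthogonal transformation of $\R^n$ (a composition of $Q^\top$ with $\pm I$), hence a bijection of the unit sphere $\{v:\|v\|=1\}$ onto itself. Performing this change of variables in the outer minimization therefore gives
\[
\cm(PS)\;=\;\min_{\|v\|=1}\ \max_{i}\ \frac{d_i^\top\!\left(\mathrm{sign}(\alpha)\,Q^\top v\right)}{\|d_i\|}\;=\;\min_{\|w\|=1}\ \max_{i}\ \frac{d_i^\top w}{\|d_i\|}\;=\;\cm(S),
\]
which is the claim. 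Attainment of these minima is guaranteed by Lemma~\ref{thm:existence-cmp}, though it is not strictly needed here since the bijection already forces equality of the two optimal values.

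I expect the only delicate point to be the sign of $\alpha$: when $\alpha<0$ the transformed vectors $\alpha Q d_i$ point opposite to $Q d_i$, and one must resist pulling the resulting minus sign outside the $\max$, which would incorrectly turn it into a $\min$. Absorbing $\mathrm{sign}(\alpha)$ into the orthogonal change of variables $w=\mathrm{sign}(\alpha)\,Q^\top v$ handles both signs uniformly and sidesteps this pitfall. Finally, I would remark that the ``permutation'' half of the statement needs no separate argument: it is either the special case in which $Q$ is a permutation matrix, or, more simply, a consequence of $\cm$ being defined on the \emph{set} $\Sset$, so that it is independent of the order in which the vectors are listed as columns of $S$.
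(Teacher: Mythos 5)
Your proof is correct: the paper gives no argument of its own for this statement (it simply cites Regis, Theorem~5.1), and your change of variables $w=\mathrm{sign}(\alpha)\,Q^\top v$, an orthogonal bijection of the unit sphere applied inside the outer minimization, is exactly the standard argument behind the cited result. You also handle the two genuine subtleties correctly — absorbing the sign of $\alpha$ into the orthogonal map rather than pulling it out of the $\max$, and observing that permutation invariance is automatic because $\cm$ is defined on the set of columns — so nothing is missing.
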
 

Additionally, Theorem~\ref{thm:augment-set-vector} shows how we may augment the set by adding additional vectors to increase the complexity. This increases the number of bases needed to be checked when running exhaustive methods and acts to break the symmetry of the sets.

\begin{theorem}\label{thm:augment-set-vector}
    Let $\Sset$ be a set of unit vectors in $\R^n$ and $u^* \in c\mathbf V(\Sset)$. Let $v \in \R^n$ such that $u^{*\top}v \leq \cm(\Sset)$. Then $\cm(\Sset) = \cm(\Sset \cup \{v\})$.
\end{theorem}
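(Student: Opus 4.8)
The plan is to show the two inequalities $\cm(\Sset \cup \{v\}) \le \cm(\Sset)$ and $\cm(\Sset \cup \{v\}) \ge \cm(\Sset)$ separately, exploiting the fact that adding a vector to a set can only increase the inner max over the set.

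First I would establish $\cm(\Sset \cup \{v\}) \ge \cm(\Sset)$, which holds for \emph{any} added vector $v$ and requires no hypothesis. For any unit vector $w$, we have $\cm(\Sset \cup \{v\}, w) = \max\{\cm(\Sset, w), v^\top w / \|v\|\} \ge \cm(\Sset, w) \ge \cm(\Sset)$ (the last inequality by definition of $\cm(\Sset)$ as a minimum; if $v$ is a zero vector one should interpret $v/\|v\|$ appropriately, but the statement assumes unit vectors in $\Sset$ and one may assume $v \ne \0$, or handle it by noting the $v$ term simply does not lower the max). Taking the minimum over all unit $w$ gives $\cm(\Sset \cup \{v\}) \ge \cm(\Sset)$.

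Next I would establish the reverse inequality $\cm(\Sset \cup \{v\}) \le \cm(\Sset)$, and this is where the hypothesis $u^{*\top} v \le \cm(\Sset)$ enters. Take $u^* \in c\mathbf V(\Sset)$, so $u^*$ is a unit vector achieving $\cm(\Sset, u^*) = \cm(\Sset)$. Then evaluate the cosine measure function of the augmented set at $u^*$:
\begin{align*}
\cm(\Sset \cup \{v\}, u^*) = \max\left\{ \max_{d \in \Sset} \frac{d^\top u^*}{\|d\|},\ \frac{v^\top u^*}{\|v\|} \right\} = \max\left\{ \cm(\Sset),\ \frac{v^\top u^*}{\|v\|} \right\}.
\end{align*}
Since $v$ is assumed to be a unit vector, $\|v\| = 1$, so the hypothesis $u^{*\top} v \le \cm(\Sset)$ gives that this max equals $\cm(\Sset)$. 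Therefore $\cm(\Sset \cup \{v\}) = \min_{\|w\|=1} \cm(\Sset \cup \{v\}, w) \le \cm(\Sset \cup \{v\}, u^*) = \cm(\Sset)$.

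Combining the two inequalities yields $\cm(\Sset \cup \{v\}) = \cm(\Sset)$. I do not anticipate a serious obstacle here; the argument is essentially a two-sided sandwich. The only point requiring mild care is bookkeeping around normalization — the definition of $\cm$ divides each $d$ by $\|d\|$, so one should make sure $v$ is treated consistently (the statement says $\Sset$ is a set of unit vectors and writes $u^{*\top}v$ without a norm, so presumably $v$ is also intended to be a unit vector, making $v^\top u^* = v^\top u^* / \|v\|$). A secondary point worth a remark is that Lemma~\ref{thm:existence-cmp} guarantees $c\mathbf V(\Sset) \ne \varnothing$ so that a valid $u^*$ exists to plug in.
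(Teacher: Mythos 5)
Your proof is correct and takes essentially the same approach as the paper: the paper likewise notes that for every unit $q$ the inner maximum can only grow when $v$ is added (giving $\cm(\Sset\cup\{v\},q)\ge\cm(\Sset)$), and that the hypothesis $u^{*\top}v\le\cm(\Sset)$ forces $\cm(\Sset\cup\{v\},u^*)=\cm(\Sset)$, so $u^*$ remains a cosine vector of the augmented set. Your side remark about treating $v$ as a unit vector matches the paper's implicit convention and does not affect the argument.
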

\begin{proof}
First notice that $$\cm(\Sset)=\max_{d\in \Sset} u^{*\top} d = \max_{d\in \Sset\cup \{v\}} u^{*\top} d,$$ where the last equality holds by the assumption $u^{*\top}v \leq \cm(\Sset)$. Next, let $q\in \R^n$ with $\|q\|=1$ and notice that 
$$\cm(\Sset) = \min_{z\in\R^n, \|z\|=1} \max_{d\in \Sset} z^\top d \leq \max_{d \in \Sset} q^{\top} d \leq \max_{d\in \Sset \cup \{v\}} q^\top d.$$
So it follows that $\max_{d\in \Sset \cup \{v\}} u^{*\top}d \leq \max_{d \in S \cup \{v\}} q^\top d$ where $q$ was an arbitrary unit vector. Hence $u^* \in \cv(\Sset \cup \{v\})$ and $\cm(\Sset \cup \{v\}) = \cm(\Sset)$. 
\end{proof}


        
        
            


\subsection{Minimal Positive Bases} \label{sec:minimal-delta-angle-pb}
Here we introduce the construction of two minimal positive bases, along with the corresponding cosine measures. The first is the canonical minimal positive basis, which was defined in \cite{DFO-textbook-2009}. 
\begin{definition} (Canonical Minimal Positive Basis)
    The canonical minimal positive basis of dimension $n$ is defined to be $\Pset =\{e_1,e_2,...,e_n,-e\}$.
\end{definition}
\begin{lemma}\cite[Lemma 3.1]{hare-k-spanning-2024}
    The cosine measure of the canonical minimal positive basis of dimension $n$ is $\frac{1}{\sqrt{n^2+2(n-1)\sqrt{n}}}$.
\end{lemma}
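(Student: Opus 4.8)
# Proof Proposal

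The plan is to compute $\cm(\Pset)$ for $\Pset = \{e_1, \dots, e_n, -e\}$ by exploiting the structure provided earlier in the paper — specifically Corollary~\ref{cor:necessary-cond-gram-vector}, which tells us that any cosine vector is the Gram vector of some basis $\B \subseteq \Pset$ with Gram value equal to $\cm(\Pset)$. Since $|\Pset| = n+1$, the bases contained in $\Pset$ come in exactly two symmetry classes: the standard basis $\{e_1, \dots, e_n\}$, and the bases obtained by deleting one $e_i$ and including $-e$, i.e. $\B_j = \{e_1, \dots, e_n, -e\} \setminus \{e_j\}$ for $j = 1, \dots, n$. By Theorem~\ref{thm:gram-vector}, the Gram value of a basis $\B$ with matrix $B$ is $\gamma_\B = 1/\sqrt{e^\top \gram(B)^{-1} e}$, so the whole computation reduces to evaluating $e^\top (B^\top B)^{-1} e$ for representatives of these two classes and taking the one yielding the \emph{smallest} Gram value (since $\cm$ is a minimum).

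First I would handle the standard basis: $B = I$, so $\gram(B)^{-1} = I$ and $e^\top I e = n$, giving Gram value $1/\sqrt{n}$. But one must check that the corresponding Gram vector $u = e/\sqrt{n}$ actually satisfies $u^\top d \leq \gamma$ for \emph{all} $d \in \Pset$, in particular for $d = -e$; here $u^\top(-e) = -\sqrt{n} < 0 \le 1/\sqrt n$, so this candidate is feasible but, as we will see, not optimal. Next, and this is the substantive computation, I would compute $e^\top(\B_j^\top \B_j)^{-1} e$ for $\B_j$. By rotation/permutation invariance it suffices to take $j = n$, so the matrix is $B = [e_1 \ \cdots \ e_{n-1} \ (-e)]$. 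I would write $\gram(B) = B^\top B$ explicitly: it is the $n \times n$ matrix whose top-left $(n-1)\times(n-1)$ block is $I_{n-1}$, whose last diagonal entry is $\|-e\|^2 = n$, and whose off-diagonal entries in the last row/column are $e_i^\top(-e) = -1$. So $\gram(B) = \begin{pmatrix} I_{n-1} & -e_{n-1} \\ -e_{n-1}^\top & n \end{pmatrix}$, where $e_{n-1}$ here denotes the all-ones vector of length $n-1$. I would invert this using the block-matrix inverse formula (the Schur complement of the $I_{n-1}$ block is $n - e_{n-1}^\top e_{n-1} = n - (n-1) = 1$), obtaining a closed form, then contract with $e$ on both sides.

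The main obstacle — really just bookkeeping rather than conceptual difficulty — will be carrying out the block inversion and the subsequent quadratic form $e^\top \gram(B)^{-1} e$ without arithmetic slips, and then confirming that $n^2 + 2(n-1)\sqrt{n}$ is indeed what emerges. Concretely, the Schur-complement formula gives $\gram(B)^{-1} = \begin{pmatrix} I_{n-1} + e_{n-1}e_{n-1}^\top & e_{n-1} \\ e_{n-1}^\top & 1\end{pmatrix}$ (using that the Schur complement equals $1$), and then $e^\top \gram(B)^{-1} e = e_{n-1}^\top(I_{n-1} + e_{n-1}e_{n-1}^\top)e_{n-1} + 2 e_{n-1}^\top e_{n-1} + 1 = (n-1) + (n-1)^2 + 2(n-1) + 1$. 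One checks this equals $n^2 + n - 1$, which does \emph{not} match — so in fact the Gram vector of $\B_j$ must be taken with the correct sign, or more likely the matrix must be normalized so columns are unit vectors, which $-e$ is not. Thus the genuine subtlety is that Corollary~\ref{cor:necessary-cond-gram-vector} requires a basis of \emph{unit} vectors, so I must first replace $-e$ by $-e/\sqrt{n}$; redoing the computation with $B = [e_1 \ \cdots \ e_{n-1} \ (-e/\sqrt n)]$ changes the last diagonal entry of $\gram(B)$ to $1$ and the off-diagonals to $-1/\sqrt n$, and it is this normalized computation that should yield $e^\top\gram(B)^{-1}e = n^2 + 2(n-1)\sqrt n$. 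Finally I would compare $1/\sqrt{n^2 + 2(n-1)\sqrt n}$ against $1/\sqrt n$, verify the former is smaller for all $n \geq 1$ (equivalently $n^2 + 2(n-1)\sqrt n \geq n$, clear for $n\ge 1$), check feasibility of the associated Gram vector against every element of $\Pset$ (the $K^\circ \cap \Pset = \varnothing$ condition of Theorem~\ref{thm:vertex-to-gram}), and conclude that the minimum Gram value — hence $\cm(\Pset)$ — is $1/\sqrt{n^2 + 2(n-1)\sqrt n}$.
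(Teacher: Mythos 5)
Your proposal is essentially correct, and note that the paper itself does not prove this lemma --- it simply cites \cite[Lemma 3.1]{hare-k-spanning-2024} --- so the only comparison available is with the paper's general machinery, which is exactly what you use (Corollary~\ref{cor:necessary-cond-gram-vector} plus the Gram-value formula of Theorem~\ref{thm:gram-vector}). Two remarks. First, your opening heuristic ``take the basis with the smallest Gram value, since $\cm$ is a minimum'' is not valid as stated: in general $\cm(\Pset)$ is the smallest Gram value only among bases whose Gram vector is \emph{feasible}, i.e.\ makes cosine at most $\gamma_\B$ with every element of $\Pset$ (equivalently $K^\circ(\B,u)\cap\Pset=\varnothing$); a basis whose Gram vector is ``blocked'' by another element of the set can have a Gram value strictly below $\cm(\Pset)$. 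You do close this gap at the end by invoking the feasibility check, and here it succeeds: for $\B_n=\{e_1,\dots,e_{n-1},-e/\sqrt n\}$ the Gram vector has $u^\top e_n=-\gamma(\sqrt n+n-1)<0\le\gamma$, so $\cm(\Pset)\le\gamma$, which together with Corollary~\ref{cor:necessary-cond-gram-vector} (which forces $\cm(\Pset)\in\{1/\sqrt n,\gamma\}$) gives $\cm(\Pset)=\gamma$. Second, you correctly identified the real subtlety --- the Gram machinery requires unit vectors, so $-e$ must be replaced by $-e/\sqrt n$ --- and the normalized computation does work out: with last diagonal entry $1$ and off-diagonal entries $-1/\sqrt n$, the Schur complement is $1/n$ and $e^\top\gram(B)^{-1}e=(n-1)+(n-1)^2+2\sqrt n(n-1)+n=n^2+2(n-1)\sqrt n$, yielding the claimed value $\gamma=1/\sqrt{n^2+2(n-1)\sqrt n}<1/\sqrt n$. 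So the argument is sound once the feasibility step is made explicit rather than treated as an afterthought.
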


{The second minimal positive basis is the minimal $\delta$-shift positive basis. This construction produces a family of positive bases with varying cosine measure using the parameter $\delta$. When $\delta = 0$, we may recover the canonical uniform minimal positive basis, which we define as follows.
\begin{definition}
The \textit{canonical uniform simplex} $P$ is the matrix
    \begin{align*}
    P = \begin{bmatrix}
            a_1 & -\frac{a_1}{n} & -\frac{a_1}{n} & \cdots & -\frac{a_1}{n} & -\frac{a_1}{n} & \vline & -\frac{a_1}{n} \\
            0 & a_2 & -\frac{a_2}{n-1} & \cdots & -\frac{a_2}{n-1} & -\frac{a_2}{n-1} & \vline & -\frac{a_2}{n-1} \\
            0 & 0 & a_3 & \cdots & -\frac{a_3}{n-2} & -\frac{a_3}{n-2} & \vline & -\frac{a_3}{n-2} \\
            \vdots & \vdots & \vdots & \ddots & \vdots & \vdots & \vline & \vdots \\
            0 & 0 & 0 & \cdots & a_{n-1} & -\frac{a_{n-1}}{2} & \vline & -\frac{a_{n-1}}{2} \\
            0 & 0 & 0 & \cdots & 0 & a_n & \vline & -a_n
        \end{bmatrix}
\end{align*}
and $a_i = \sqrt{\frac{(n-i+1)(n+1)}{n(n-i+2)}},~i = 1, 2, \dots, n.$ Let $\Pset$ be a set containing only the columns of $P$. We define $\Pset$ to be the \emph{canonical uniform minimal positive basis}. 
\end{definition}
It was verified in \cite[Theorem 16]{jarry-bolduc_uniform_2020} that the canonical uniform minimal positive basis is indeed a positive basis. Moreover, it is known to have the maximal cosine measure over all minimal positive bases \cite{naevdal_positive_2019}. 
}
\begin{definition}\cite[Definition 8]{jarry-bolduc_uniform_2020} (Uniform Minimal Positive Basis) The set $\Pset =\{d_1, d_2,...,d_{n+1}\}$, where $\|d_i\|=m > 0$ for $i = 1, 2,...,n+1$ is a uniform minimal positive basis for $\R^n$ if and only if $\Pset$ positively spans $\R^n$ and $d_i^\top d_j =\frac{-m^2}{n}$, $1 \leq i < j \leq n+1$.
\end{definition}
\begin{theorem}\cite[Theorem 1]{naevdal_positive_2019}\label{thm:optimal-max-cm-for-min-pbasis}
    Let $\Pset = \{d_1, d_2,...,d_{n+1}\}$ be a (minimal) positive basis of unit vectors for $\R^n$. Then the following holds:
    \begin{enumerate}
        \item $\cm(\Pset) \leq \frac{1}{n}$.
        \item $\cm(\Pset) = \frac{1}{n} \iff d_i^\top d_j = -\frac{1}{n}$, for all $i \not =j$.
    \end{enumerate}
\end{theorem}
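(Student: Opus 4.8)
The plan is to prove part~1 by passing to the polytope reformulation \eqref{eqn:QP-polytope}, whose feasible region has a completely explicit vertex set when $\Pset$ is a \emph{minimal} positive basis, and then to prove part~2 by analysing when the resulting bound is tight. Throughout write $\Pset=\{d_1,\dots,d_{n+1}\}$ and recall from Theorem~\ref{thm:pspan-char}~\ref{thm:pspan-char:pos-spanning-sum-equals-0} that there are scalars $\lambda_1,\dots,\lambda_{n+1}>0$ with $\sum_i\lambda_i d_i=\0$; since $n+1$ linearly spanning vectors in $\R^n$ admit only a one-dimensional space of linear dependencies, it follows that every $n$-element subset of $\Pset$ is in fact a basis of $\R^n$.

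For part~1, I would first identify the vertices of $P=\{x\in\R^n:x^\top d\le 1,\ d\in\Pset\}$: they are exactly the $n+1$ points $y_j$ determined by $y_j^\top d_i=1$ for all $i\ne j$, one for each basis $\Pset\setminus\{d_j\}$. Indeed, writing $d_j=-\tfrac1{\lambda_j}\sum_{i\ne j}\lambda_i d_i$ gives $y_j^\top d_j=1-\tfrac1{\lambda_j}\sum_i\lambda_i<1$, so $y_j\in P$ and $y_j$ is a vertex (it has $n$ linearly independent active constraints); conversely no point can have all $n+1$ constraints active, since pairing with $\sum_i\lambda_id_i=\0$ would give $\sum_i\lambda_i=0$, so every vertex is some $y_j$, and the $y_j$ are pairwise distinct. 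By Theorem~\ref{thm:reformulation} and the fact that $\|\cdot\|^2$, being strictly convex, attains its maximum over $P$ at a vertex, we get $\cm(\Pset)=1/\max_j\|y_j\|=\min_j\gamma_{\B_j}$, where $B_j$ has columns $\Pset\setminus\{d_j\}$ and $\gamma_{\B_j}=1/\|y_j\|=(e^\top\gram(B_j)^{-1}e)^{-1/2}$. Thus part~1 reduces to $\max_j\|y_j\|\ge n$, which I would obtain by an averaging bound, namely
\[
\sum_{j=1}^{n+1}\|y_j\|^2=\sum_{j=1}^{n+1}e^\top\gram(B_j)^{-1}e\ \ge\ (n+1)n^2 .
\]
One route to this is to use that the $(n+1)\times(n+1)$ Gram matrix $A$ of $\Pset$ is positive semidefinite of rank $n$ with unit diagonal and $A\lambda=\0$, so $\mathrm{adj}(A)=c\,\lambda\lambda^\top$ for some $c>0$ (hence $\det\gram(B_j)=c\lambda_j^2$), and then to expand each $e^\top\gram(B_j)^{-1}e$ through cofactors of $A$; alternatively one can argue more geometrically from the relation $\sum_j\lambda_j y_j=\0$ together with $y_j^\top d_i=1$ for $i\ne j$. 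I expect the verification of this averaged inequality, and the identification of its equality case, to be the main obstacle.

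For the ``if'' direction of part~2, suppose $d_i^\top d_j=-\tfrac1n$ for all $i\ne j$. Then $A=\tfrac{n+1}{n}I_{n+1}-\tfrac1n ee^\top$, which is positive semidefinite of rank $n$ with kernel spanned by $e$, so $\sum_i d_i=\0$; moreover, writing $D=[d_1\ \cdots\ d_{n+1}]$, the matrix $D^\top D=A$ has eigenvalues $\tfrac{n+1}{n}$ (multiplicity $n$) and $0$, so $\sum_i d_id_i^\top=DD^\top=\tfrac{n+1}{n}I_n$. For any unit vector $v$ set $a_i=v^\top d_i$; then $\sum_i a_i=0$, $\sum_i a_i^2=\tfrac{n+1}{n}$, and $|a_i|\le 1$. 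Splitting the indices by the sign of $a_i$, bounding $a_i^2\le M a_i$ with $M:=\max_i a_i$ on the positive part, $a_i^2\le|a_i|$ on the negative part, and using that the positive part has at most $n$ indices, a short computation yields $nM^2+nM\ge\tfrac{n+1}{n}$ and hence $M\ge\tfrac1n$. Therefore $\cm(\Pset)\ge\tfrac1n$, and combined with part~1 we conclude $\cm(\Pset)=\tfrac1n$.

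For the ``only if'' direction, suppose $\cm(\Pset)=\tfrac1n$. Since $\cm(\Pset)=\min_j\gamma_{\B_j}$, equality together with the averaged inequality from part~1 forces $\gamma_{\B_j}=\tfrac1n$, i.e. $e^\top\gram(B_j)^{-1}e=n^2$, for every $j$. From here a rigidity step — tracking the equality conditions in the bound of part~1 and exploiting that \emph{all} $n$-element sub-Gram matrices of $\Pset$ share the same value — should recover $d_i^\top d_j=-\tfrac1n$ for all $i\ne j$. Establishing this rigidity (equivalently, that the regular simplex is the unique equality configuration), together with the averaged Gram inequality itself, is the hardest part of the argument; every remaining step is a routine computation with the reformulation \eqref{eqn:QP-polytope} and with Gram matrices.
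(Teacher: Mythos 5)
There is a genuine gap. The reduction you set up is fine: for a minimal positive basis every $n$-element subset is a basis, the vertices of $P=\{x: x^\top d\le 1,\ d\in\Pset\}$ are exactly the $n+1$ points $y_j$ with $y_j^\top d_i=1$ for $i\ne j$, and via Theorem~\ref{thm:reformulation} one gets $\cm(\Pset)=1/\max_j\|y_j\|=\min_j\gamma_{\B_j}$; your ``if'' direction of part~2 (the computation with $a_i=v^\top d_i$, $\sum_i a_i=0$, $\sum_i a_i^2=\tfrac{n+1}{n}$, giving $M\ge\tfrac1n$) also checks out. But the two steps that actually carry the content of the theorem are not proved. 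For part~1 you reduce everything to the averaged inequality $\sum_j e^\top\gram(B_j)^{-1}e\ge(n+1)n^2$ (or at least $\max_j\|y_j\|\ge n$) and then only sketch two possible routes (``one route is\dots, alternatively\dots''), explicitly flagging its verification as the main obstacle; neither route is carried out, and the cofactor identity $\operatorname{adj}(A)=c\,\lambda\lambda^\top$ by itself does not yield the bound without further work. Likewise, the ``only if'' direction of part~2 rests on an unproven ``rigidity step'' that you acknowledge is the hardest part. So what remains unestablished is precisely the bound $\cm(\Pset)\le\tfrac1n$ and the characterization of its equality case --- i.e.\ the theorem itself; the steps you do complete are the routine bookkeeping around it.

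For context: the paper does not prove this statement either; it is quoted directly from N{\ae}vdal's Theorem~1, whose argument works with the Gram matrices/Gram values of the sub-bases rather than through the polytope reformulation. Your framework is a reasonable alternative starting point (and the identity $\sum_j\lambda_j y_j=\0$ you derive is correct and potentially useful), but as written the proposal is a plan with the decisive inequality and the equality-case rigidity still missing, not a proof.
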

Using Theorem~\ref{thm:n+1-arbitrary-set} and Corollary~\ref{corollary:min-pos-basis-delta-choice}, which are presented below, we may construct any positive basis of size $n+1$ with cosine measure in $(0, \frac{1}{n}]$. Thus our theorem enables us to construct a minimal positive basis for all possible cosine measures.

The intuition of the theorem is as follows. Starting from a uniform minimal positive basis, we note that the cosine vector is $-e_1$. We  shift the vectors $d_2, \dots, d_{n+1}$ by adding $\delta e_1$, which ``widens the gap'' and decreases the cosine measure. Here, the parameter $\delta$ controls how much we wish to ``widen the gap''. Figure~\ref{fig:n+1} illustrates the application of the theorem. The black vectors indicate the canonical uniform simplex, the dashed red line indicates the shift, and the resulting vectors after normalization are in blue. Notice after shifting the vector has length $\frac{1}{\alpha}$ and is no longer unit, hence we scale our vectors by $\alpha$ in the theorem. Notice that the green cosine vector is the same for both the original and new sets. As we shift, the angle between the set and the cosine vector increases, thus decreasing the cosine measure. 

\begin{figure}[ht]
  \begin{center}
    \includegraphics[width=0.6\textwidth]{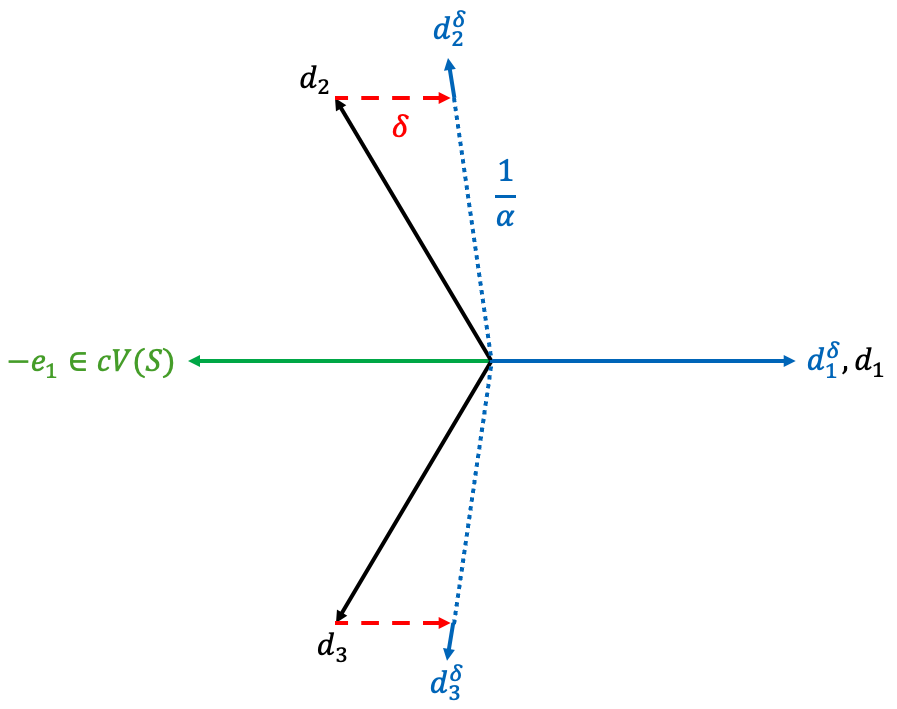}
    \caption{\label{fig:n+1}An example of Theorem~\ref{thm:n+1-arbitrary-set} in $\R^2$. }
  \end{center}
\end{figure}

The following theorem will allow us to construct sets of size $n+1$ with arbitrary cosine measure, for which we call the minimal $\delta$-shift positive basis. 

\begin{restatable}{theorem}{minpbasis} \label{thm:n+1-arbitrary-set}
    Let $n \geq 2$ and $0 \leq \delta < \frac{1}{n}$. Let $P^0 = [d_1 \dots d_{n+1}]$ denote the canonical uniform simplex. Consider the matrix $P^\delta$ defined by 
    \begin{align*}
        [P^\delta]_i = \begin{cases}
            d_i & \text{if } i = 1\\
            \alpha (d_i + \delta e_1) & \text{if } 2 \leq i \leq n+1
        \end{cases}
    \end{align*} where $\alpha = \frac{n}{\sqrt{n^2\delta^2 - 2 n\delta + n^2}}$ is a normalizing factor. Let $\Pset^\delta=\{d^\delta_1,\dots,d^\delta_{n+1}\}$ denote the set of columns of $P^\delta$. Then $\Pset^\delta$ is a positive basis with cosine measure $ \frac{1-\delta n}{\sqrt{n^2\delta^2 - 2 n\delta + n^2}}$.    
\end{restatable}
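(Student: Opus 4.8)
The plan is to establish two things: first, that $\Pset^\delta$ is a positive basis, and second, that its cosine measure has the claimed value. For the cosine measure, the cleanest route is to exhibit an explicit candidate cosine vector and invoke the machinery already developed. The natural candidate is $u^* = -e_1$, which is the cosine vector of the original canonical uniform simplex. Before shifting, each column $d_i$ of $P^0$ (which is a uniform minimal positive basis of unit vectors, hence $d_i^\top d_j = -\tfrac{1}{n}$ by Theorem~\ref{thm:optimal-max-cm-for-min-pbasis}) satisfies $d_i^\top(-e_1) = \tfrac{1}{n}$ for all $i$, so $-e_1$ is the Gram vector of $P^0$ with Gram value $\tfrac{1}{n}$. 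First I would compute, for each $i \ge 2$, the inner product $(d^\delta_i)^\top(-e_1) = \alpha\bigl((d_i + \delta e_1)^\top(-e_1)\bigr) = \alpha\bigl(\tfrac{1}{n} - \delta\bigr) = \alpha\cdot\tfrac{1-\delta n}{n}$, and for $i=1$, $(d^\delta_1)^\top(-e_1) = \tfrac{1}{n}$. Substituting $\alpha = \tfrac{n}{\sqrt{n^2\delta^2 - 2n\delta + n^2}}$ gives $\alpha\cdot\tfrac{1-\delta n}{n} = \tfrac{1-\delta n}{\sqrt{n^2\delta^2 - 2n\delta + n^2}}$; I would also need to check that this common value is $\le \tfrac{1}{n}$ (true since $\sqrt{n^2\delta^2 - 2n\delta + n^2} \ge \sqrt{n^2 - 2n\delta} $... more carefully, one checks $\tfrac{1-\delta n}{\sqrt{n^2\delta^2-2n\delta+n^2}} \le \tfrac 1n$ iff $n(1-\delta n) \le \sqrt{n^2\delta^2-2n\delta+n^2}$ iff $n^2(1-\delta n)^2 \le n^2\delta^2 - 2n\delta + n^2$, which simplifies to a true inequality for $0 \le \delta < \tfrac 1n$). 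So all columns of $P^\delta$ make inner product at most $\tfrac{1-\delta n}{\sqrt{n^2\delta^2-2n\delta+n^2}}$ with $-e_1$, with columns $2,\dots,n+1$ attaining it, i.e. these columns all lie on the boundary.

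Next I would verify the columns of $P^\delta$ are unit vectors — this is exactly what the normalizing factor $\alpha$ is designed to guarantee — by computing $\|d_i + \delta e_1\|^2 = \|d_i\|^2 + 2\delta (d_i^\top e_1) + \delta^2$; note that $d_i^\top e_1 = [d_i]_1 = -\tfrac 1n$ for $i \ge 2$ (reading off the canonical uniform simplex's first row), so $\|d_i + \delta e_1\|^2 = 1 - \tfrac{2\delta}{n} + \delta^2 = \tfrac{n^2\delta^2 - 2n\delta + n^2}{n^2} = \tfrac{1}{\alpha^2}$, confirming $\|d^\delta_i\| = 1$. With $u^* = -e_1$ a unit vector making equal inner product $\gamma := \tfrac{1-\delta n}{\sqrt{n^2\delta^2-2n\delta+n^2}}$ with the sub-basis $\{d^\delta_2,\dots,d^\delta_{n+1}\}$, and making no larger inner product with $d^\delta_1$, I would argue optimality via Theorem~\ref{thm:vertex-to-gram}\ref{thm:vertex-to-gram-backward}: letting $\B = \{d^\delta_2,\dots,d^\delta_{n+1}\}$ (which I must check is a basis — see below), $u^*$ is its Gram vector with Gram value $\gamma$, and $K^\circ(\B, u^*) \cap \Pset^\delta = \varnothing$ precisely because every column of $P^\delta$ has inner product $\le \gamma$ with $u^*$. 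Hence $\tfrac{u^*}{\gamma}$ is a vertex of the polytope $P$ from the reformulation, so by Theorem~\ref{thm:reformulation} the cosine measure is at most $\gamma$; combined with $\cm(\Pset^\delta) \le \tfrac 1n$ and a lower-bound argument, one concludes equality. Actually the cleaner finish: any cosine vector is, by Corollary~\ref{cor:necessary-cond-gram-vector}, the Gram vector of some basis $\B' \subset \Pset^\delta$; I would show that among all $n$-subsets of $\Pset^\delta$ that form a basis, the Gram value is minimized (hence the cosine measure achieved) by $\{d^\delta_2,\dots,d^\delta_{n+1}\}$, perhaps by a direct computation of all the relevant Gram values — there are only $n+1$ such subsets — or, more elegantly, simply note $\cm(\Pset^\delta) = \min_{\|v\|=1}\max_i (d^\delta_i)^\top v \le \max_i (d^\delta_i)^\top(-e_1) = \gamma$ and conversely $\cm(\Pset^\delta) \ge \gamma$ because $-e_1 \in \cv$ once we know it is a KKT-type point; the vertex characterization via Theorem~\ref{thm:vertex-to-gram}\ref{thm:vertex-to-gram-backward} gives the converse directly by producing $\tfrac{u^*}{\gamma}$ as a feasible point of $P$ of norm $\tfrac 1\gamma$, forcing the maximum norm in \eqref{eqn:QP-polytope} to be $\ge \tfrac 1\gamma$, i.e. $\cm(\Pset^\delta) \le \gamma$, while the feasibility constraints of $P$ and the fact that $-e_1/\gamma$ saturates $n$ linearly independent constraints pin the optimum.

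To see $\Pset^\delta$ is a positive basis and that $\B = \{d^\delta_2,\dots,d^\delta_{n+1}\}$ is a linear basis of $\R^n$: the shift by $\delta e_1$ and scaling by $\alpha$ is an affine-type perturbation that I expect preserves linear independence of $\{d_2,\dots,d_{n+1}\}$ for $\delta$ in the stated range; one way is to write $P^\delta$'s relevant submatrix as $\alpha(M + \delta e_1 e^\top)$ where $M = [d_2\ \cdots\ d_{n+1}]$ and use the rank-1 update/matrix-determinant lemma (the paper even provides the rank-1 update formula) to show the determinant stays nonzero. For the positive spanning property, I would use Theorem~\ref{thm:pspan-char}\ref{thm:pos-spanning-sum-equals-0}: since $P^0$ is positive spanning there are positive $\lambda_i^0$ with $\sum \lambda_i^0 d_i = 0$; I would produce analogous positive coefficients for $\Pset^\delta$, which amounts to solving $\lambda_1 d_1 + \sum_{i\ge 2}\lambda_i \alpha(d_i + \delta e_1) = 0$, a small linear system whose solvability with positive entries should follow from continuity/perturbation from the $\delta = 0$ case (or, again invoking Theorem~\ref{thm:pos-span-cm>0}, positive spanning is equivalent to $\cm > 0$, and $\gamma = \tfrac{1-\delta n}{\sqrt{n^2\delta^2-2n\delta+n^2}} > 0$ for $\delta < \tfrac 1n$, so once the cosine measure computation is in place the positive spanning property comes for free — modulo first knowing it's positive spanning to even apply the reformulation, so I would prove positive spanning directly and independently first). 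Finally, minimality ($|\Pset^\delta| = n+1$) is immediate, and that no column lies in the positive span of the others follows from it being a positive basis of exactly $n+1$ elements.

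The main obstacle I anticipate is not the inner-product bookkeeping (which is mechanical once the first row of the canonical uniform simplex is read off correctly) but rather cleanly establishing that $-e_1$ is genuinely the global cosine vector of $\Pset^\delta$ rather than merely a critical point: I need to rule out that some other basis in $\Pset^\delta$ yields a strictly smaller Gram value. The Theorem~\ref{thm:vertex-to-gram}\ref{thm:vertex-to-gram-backward} route sidesteps this elegantly — exhibiting $-e_1/\gamma$ as a point of $P$ and noting no point of $P$ can have larger norm would require the reverse inequality, so I would instead argue: $\cm(\Pset^\delta) \le \gamma$ trivially (test vector $-e_1$), and $\cm(\Pset^\delta) \ge \gamma$ because Theorem~\ref{thm:vertex-to-gram}\ref{thm:vertex-to-gram-backward} shows $-e_1/\gamma$ is a vertex of $P$, hence $\max_{x\in P}\|x\| \ge \|-e_1/\gamma\| = 1/\gamma$, hence by Theorem~\ref{thm:reformulation} $\cm(\Pset^\delta) = 1/\max\|x\| \le \gamma$ — wait, that gives the same direction again. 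The correct logic: $\cm(\Pset^\delta) = 1/\max_{x \in P}\|x\|^{1/2\cdot 2}$... more precisely $\cm(\Pset^\delta)^{-1} = \sqrt{\max_{x\in P}\|x\|^2}$, so a lower bound on the cosine measure needs an upper bound on $\max_{x \in P}\|x\|$; thus I would instead directly show any cosine vector equals a Gram vector of some basis (Corollary~\ref{cor:necessary-cond-gram-vector}) and check the finitely many candidate bases, OR show $-e_1 \in \cv(\Pset^\delta)$ by a first-order optimality argument in the minimax formulation. I expect checking the finitely many bases, exploiting the high symmetry among $d^\delta_2,\dots,d^\delta_{n+1}$, to be the most reliable path, and that is where the real work lies.
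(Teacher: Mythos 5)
Your overall strategy is the right one, and it matches the paper's: take $u^*=-e_1$, verify the columns of $P^\delta$ are unit vectors, observe that $u^*$ is the Gram vector of $\B=\{d^\delta_2,\dots,d^\delta_{n+1}\}$ with Gram value $\gamma=\alpha\left(\tfrac1n-\delta\right)$, note $(d^\delta_1)^\top u^*$ does not exceed $\gamma$ (it equals $-1$, not $\tfrac1n$ as you wrote -- a harmless slip, but note also that $-e_1$ cannot be ``the Gram vector of $P^0$'' since a Gram vector is defined only for an $n$-element basis), and then conclude via Corollary~\ref{cor:necessary-cond-gram-vector} by comparing Gram values over the finitely many sub-bases, using the symmetry among $d^\delta_2,\dots,d^\delta_{n+1}$ so that only one comparison is needed. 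You even correctly diagnose that the Theorem~\ref{thm:vertex-to-gram}\ref{thm:vertex-to-gram-backward} route only reproduces the easy inequality $\cm(\Pset^\delta)\le\gamma$. The problem is that the one step you defer with ``that is where the real work lies'' \emph{is} the proof: you never show that the Gram value of $\B$ is no larger than the Gram value of a sub-basis $\T$ containing $d^\delta_1$. In the paper this occupies two technical lemmas and the bulk of the argument: one computes $\gram(B)^{-1}$ in closed form via the rank-$1$ update, expresses $\gram(T)^{-1}=M^{-1}\gram(B)^{-1}M^{-\top}$ through an explicit change-of-basis matrix $M^{-1}$, and reduces the inequality $e^\top\gram(B)^{-1}e\ge e^\top\gram(T)^{-1}e$ to $1-\beta n+\gamma'(1+(n-2)\beta)\ge 0$, which is then verified using the bounds $\beta\le-\tfrac1n$, $\beta>-\tfrac1{n-1}$ and $-n\le\gamma'<0$ (here $\beta$ is the common inner product among the shifted vectors and $\gamma'=-\alpha(1-\delta n)-(n-1)$). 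Without some version of this computation the claimed optimality of $-e_1$, and hence the value of the cosine measure, is unproven; ``high symmetry'' reduces the number of comparisons to one but does not make that comparison trivial, since the competing sub-bases are not uniform (their Gram matrices mix the two distinct inner products $\lambda$ and $\beta$).

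Two smaller issues. For positive spanning, a ``continuity/perturbation from $\delta=0$'' argument only gives the conclusion for $\delta$ near $0$, not on all of $[0,\tfrac1n)$; the paper instead exhibits the explicit positive combination $(1-\delta n)d^\delta_1+\sum_{i=2}^{n+1}\tfrac1\alpha d^\delta_i=\mathbf 0$, which is a short computation you should carry out (your alternative of solving the linear system directly would also work, and linear spanning of $\{d^\delta_2,\dots,d^\delta_{n+1}\}$ via the rank-$1$ determinant argument is fine). Also, once you do prove $\gamma_\B\le\gamma_\T$ for every sub-basis $\T$, make the closing logic explicit: $\cm(\Pset^\delta)\le\cm(\Pset^\delta,-e_1)=\gamma_\B$, while by Corollary~\ref{cor:necessary-cond-gram-vector} $\cm(\Pset^\delta)$ equals the Gram value of some sub-basis, which is at least $\gamma_\B$; hence equality.
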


To prove this, we require the following lemmas. 
\begin{lemma}\label{lemma:min-pos-basis}
     Let $0 \leq \delta < \frac{1}{n}$, $P^0 = [d_1 \dots d_{n+1}]$, and $ P^\delta = [d^\delta_1 \ \cdots \ d^\delta_{n+1}]$ be as defined in Theorem~\ref{thm:n+1-arbitrary-set}. Define the quantities:
     \begin{align*}
         \alpha &= \frac{n}{\sqrt{n^2\delta^2 - 2 n\delta + n^2}}, \\
         \beta &= \alpha^2 \left(\frac{n\delta^2 - 2\delta -  1}{n}\right),\\
         \gamma &= -\alpha(1-\delta n)-(n-1).
     \end{align*}
     Then the following hold
     \begin{enumerate}[label={(\roman*)}]
         \item $\alpha = \frac{1}{\|d_i + \delta e_1\|}$ for all $i \in \{2, \dots, n+1\},$ \label{lemma:min-pos-basis:normalization}
          \item  $ \beta = {d^\delta_j}^\top {d^\delta_k}$ for all $2 \leq j,k \leq n+1$ and $j \not = k$, \label{lemma:min-pos-basis:beta-equal-dd}
         \item $\beta \leq -\frac{1}{n}$ and $\left(1+\frac{\beta n}{1-\beta}\right) > 0$, \label{lemma:min-pos-basis:beta-bound}
         \item  $-n \leq \gamma < 0$. \label{lemma:min-pos-basis:gamma-bound}
     \end{enumerate}
\end{lemma}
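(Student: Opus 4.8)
The plan is to treat the four parts essentially as direct computations, exploiting the known structure of the canonical uniform simplex $P^0$, namely that its columns $d_1,\dots,d_{n+1}$ are unit vectors with $d_i^\top d_j = -\tfrac{1}{n}$ for $i\neq j$ (this is the uniform minimal positive basis property cited via Theorem~\ref{thm:optimal-max-cm-for-min-pbasis}), together with the explicit fact that $d_1 = e_1$ (from the first column of $P$, since $a_1 = \sqrt{(n)(n+1)/(n(n+1))}=1$... actually $a_1=\sqrt{\frac{n(n+1)}{n(n+1)}}=1$ wait let me not grind — the key usable facts are $\|d_i\|=1$, $d_i^\top d_j=-1/n$, and $d_1$ has a nonzero first coordinate that can be computed from the matrix).

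\textbf{Part \ref{lemma:min-pos-basis:normalization}.} I would compute $\|d_i + \delta e_1\|^2 = \|d_i\|^2 + 2\delta\, d_i^\top e_1 + \delta^2\|e_1\|^2 = 1 + 2\delta\,[d_i]_1 + \delta^2$. Using $d_1 = e_1$ and $d_1^\top d_i = -1/n$ gives $[d_i]_1 = e_1^\top d_i = d_1^\top d_i = -1/n$ for $i\geq 2$. Hence $\|d_i+\delta e_1\|^2 = 1 - \tfrac{2\delta}{n} + \delta^2 = \tfrac{n^2\delta^2 - 2n\delta + n^2}{n^2}$, so its reciprocal square root is exactly $\alpha$. (I should also note $n^2\delta^2 - 2n\delta + n^2 > 0$ for $0\le\delta<1/n$, so $\alpha$ is well-defined; this follows since the discriminant in $\delta$ is negative, or just because it equals $n^2\|d_i+\delta e_1\|^2 \ge 0$ and is zero only if $d_i = -\delta e_1$, impossible as $\|d_i\|=1\neq\delta$.)

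\textbf{Part \ref{lemma:min-pos-basis:beta-equal-dd}.} For $2\le j,k\le n+1$, $j\neq k$, expand ${d_j^\delta}^\top d_k^\delta = \alpha^2 (d_j+\delta e_1)^\top(d_k+\delta e_1) = \alpha^2\bigl(d_j^\top d_k + \delta(d_j^\top e_1 + d_k^\top e_1) + \delta^2\bigr) = \alpha^2\bigl(-\tfrac1n - \tfrac{2\delta}{n} + \delta^2\bigr) = \alpha^2\cdot\tfrac{n\delta^2 - 2\delta - 1}{n} = \beta$.

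\textbf{Parts \ref{lemma:min-pos-basis:beta-bound} and \ref{lemma:min-pos-basis:gamma-bound}} are scalar inequalities in the single variable $\delta\in[0,\tfrac1n)$. For \ref{lemma:min-pos-basis:beta-bound}, I would show $\beta + \tfrac1n \le 0$ by substituting the formulas for $\alpha^2$ and $\beta$ and clearing the positive denominator $n(n^2\delta^2-2n\delta+n^2)$; the resulting polynomial inequality in $\delta$ should factor or be manifestly signed on $[0,1/n)$. The second claim $1 + \tfrac{\beta n}{1-\beta} > 0$ rewrites (using $1-\beta>0$, which follows from $\beta \le -1/n < 1$) as $1-\beta+\beta n > 0$, i.e. $1 + \beta(n-1) > 0$, again a polynomial inequality in $\delta$ to be checked on the interval. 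For \ref{lemma:min-pos-basis:gamma-bound}, $\gamma = -\alpha(1-\delta n) - (n-1)$: since $0\le\delta<1/n$ we have $1-\delta n\in(0,1]$ and $\alpha>0$, so $\gamma < -(n-1) \le 0$ when... hmm, actually $\gamma < 0$ needs $\alpha(1-\delta n) + (n-1) > 0$ which is clear; and $\gamma \ge -n$ rewrites as $\alpha(1-\delta n) \le 1$, i.e. $n(1-\delta n) \le \sqrt{n^2\delta^2-2n\delta+n^2}$; squaring (both sides nonnegative) and simplifying should reduce to something like $0 \le n^2\delta^2(n^2-1)$ or similar, true for all $\delta$.

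\textbf{Main obstacle.} None of the steps is deep; the only place requiring genuine care is making sure the polynomial inequalities in Parts \ref{lemma:min-pos-basis:beta-bound} and \ref{lemma:min-pos-basis:gamma-bound} are handled cleanly — tracking signs when clearing denominators and when squaring, and confirming the endpoint behavior at $\delta=0$ (where $\beta=-1/n$, $\alpha=1$, $\gamma=-n$, so all inequalities are tight) and as $\delta\to\tfrac1n$. The tightness at $\delta=0$ is a useful sanity check and also explains why the inequalities are stated non-strictly where they are. I expect each inequality, after clearing the (positive) denominator, to reduce to a quadratic or quartic in $\delta$ with a transparent sign on $[0,1/n)$, so the "hard part" is really just bookkeeping rather than any conceptual difficulty.
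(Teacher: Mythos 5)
Your proposal is correct, and for parts \ref{lemma:min-pos-basis:normalization} and \ref{lemma:min-pos-basis:beta-equal-dd} it is the same computation the paper does (the paper reads $[d_i]_1=-\tfrac{a_1}{n}=-\tfrac1n$ off the simplex matrix, you get it from $d_1=e_1$ and $d_1^\top d_i=-\tfrac1n$; both are fine, and $a_1=1$ indeed). Where you diverge is parts \ref{lemma:min-pos-basis:beta-bound} and \ref{lemma:min-pos-basis:gamma-bound}: the paper first proves that $\beta(\delta)=\frac{n\delta^2-2\delta-1}{n\delta^2-2\delta+n}$ and $-\alpha(1-\delta n)$ are monotone in $\delta$ and then evaluates at the endpoints $\delta=0$ and $\delta=\tfrac1n$, whereas you propose direct substitution and sign-checking of the resulting factored polynomials. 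Your route does close cleanly: one gets
\begin{align*}
\beta+\tfrac1n=\frac{(n+1)\,\delta\,(n\delta-2)}{n\,(n\delta^2-2\delta+n)}\le 0,\qquad
\beta+\tfrac1{n-1}=\frac{(n\delta-1)^2}{(n-1)\,(n\delta^2-2\delta+n)}>0,
\end{align*}
with the denominator positive since its discriminant $4-4n^2<0$, and $\gamma\ge-n$ squares down to $n\delta(n^2-1)(2-n\delta)\ge0$, while $\gamma<0$ is immediate from $\alpha(1-\delta n)>0$ and $n\ge2$. So the bookkeeping you flagged as the only risk is genuinely routine. Comparing the two: your direct verification is arguably tighter and avoids a slightly delicate spot in the paper's monotonicity chain (its displayed equivalences carry the symbol $\alpha^2$ on both sides even though $\alpha$ depends on $\delta$, and only become airtight after $\beta$ is rewritten purely in terms of $\delta$), while the paper's approach buys the qualitative insight that $\beta$ decreases and the bounds are attained at the endpoints $\delta=0$ (where everything is tight) and $\delta\to\tfrac1n$.
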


\begin{proof}~\ref{lemma:min-pos-basis:normalization}
    Let us first see that $\alpha$ is well defined. Note that
    \begin{align*}
        n^2\delta^2-2n\delta+n^2 
        &= n^2\left(\left(\delta - \frac{1}{n}\right)^2 + 1-\frac{1}{n^2}\right) \geq 0,
    \end{align*}
    where the last inequality holds since $n \geq 2$ so $1-\frac{1}{n^2} \geq 0$. By the definition of a uniform simplex we have $\|d_i\|=1$ for all $i\in\{1, \dots, n\}$. So
    \begin{align*}
        ([d_i]_1)^2 + \dots + ([d_i]_{n+1})^2 = 1. 
    \end{align*} Now, 
    \begin{align*}
        \frac{1}{\|d_i + \delta e_1\|} 
        &=\frac{1}{\sqrt{\delta^2 + 2\delta[d_i]_1 + 1}} 
        =\frac{1}{\sqrt{\delta^2 + 2\delta\left(-\frac{1}{n}\right) + 1}}         
         = \alpha.
    \end{align*}
\ref{lemma:min-pos-basis:beta-equal-dd} For any $2 \leq j,k \leq n+1$ where $j \not = k$ we have
\begin{align*}
    {d_j}^{\delta \top} d^\delta_k &= (\alpha(d_j + \delta e_1))^\top (\alpha(d_k + \delta e_1)) \\ &=\alpha^2({d_j}^\top d_k + (\delta e_1)^\top d_j + (\delta e_1)^\top d_k + (\delta e_1)^\top(\delta e_1)) \\
    &= \alpha^2 \left(-\frac{1}{n} -\frac{\delta}{n}-\frac{\delta}{n} + \delta^2 \right) =\beta,
\end{align*}
where we used the fact that $d_j^\top d_k = -\frac{1}{n}$ by Theorem~\ref{thm:optimal-max-cm-for-min-pbasis}. \\
\ref{lemma:min-pos-basis:beta-bound} We first show that $\beta$ is decreasing as a function of $\delta$. Recall that $d^\delta_{i} = \alpha(d_i + \delta e_1)$ and $\beta = d^{\delta\top}_i d^\delta_{j}$ for $2 \leq i,j \leq n+1$ where $i\not = j$. Let $0 \leq \delta_1 < \delta_2 < \frac{1}{n}$. Then
    \begin{align*}
        d_i^{\delta_1\top} d_j^{\delta_1} > d_i^{\delta_2\top} d_j^{\delta_2}  
        &\iff  \alpha^2 \left(\frac{n\delta_1^2 - 2\delta_1 -  1}{n}\right) >  \alpha^2 \left(\frac{n\delta_2^2 - 2\delta_2 -  1}{n}\right) \\
        &\iff n \delta_1^2-2\delta_2 > n \delta_2^2 - 2\delta_2 \\
        &\iff n(\delta_1 - \delta_2)(\delta_1 + \delta_2) + 2(\delta_2-\delta _1) > 0 \\
        &\iff \delta_1 + \delta_2 < \frac{2}{n}.
    \end{align*}
    By our assumption that $\delta_1 < \frac{1}{n}$ and $\delta_2 < \frac{1}{n}$, the last inequality holds. Now let us rewrite $\beta$ only in terms of $\delta$ and $n$ as
    \begin{align*}
        \beta &= \alpha^2 \left(\frac{n\delta^2 - 2\delta -  1}{n}\right) = \frac{n(n^2 \delta^2-2\delta - 1)}{\delta^2 n^2 - 2\delta n + 1 + n^2 - 1} = \frac{n \delta^2 - 2\delta -1}{n \delta^2 - 2\delta + n}.
    \end{align*}
    Since $\beta$ is decreasing, if $0 \leq \delta < \frac{1}{n}$, we know 
    $\beta \leq \frac{n \left(0\right)^2 - 2\left(0\right) -1}{n \left(0\right)^2 - 2\left(0\right) + n} = -\frac{1}{n}$.
    To show the second part of the statement, note that
    \begin{align*}
        \left(1 + \frac{\beta n}{1- \beta}\right) > 0 
        &\iff \beta > -\frac{1}{n-1}.
    \end{align*}
    Since $\beta$ is decreasing, if $0 \leq \delta < \frac{1}{n}$, we know
    $
        \beta > \frac{n \left(\frac{1}{n}\right)^2 - 2\left(\frac{1}{n}\right) -1}{n \left(\frac{1}{n}\right)^2 - 2\left(\frac{1}{n}\right) + n} = -\frac{1}{n-1}.$
\ref{lemma:min-pos-basis:gamma-bound} We first note that $-\alpha(1-\delta n)$ is an increasing function in $\delta$ since the derivative is non-negative. 
    Then the value of $-\alpha(1-\delta n)$ is bounded by
    \begin{align*}
        -\alpha(1-\delta n) = \frac{-n(1-\delta n)}{\sqrt{\delta^2n^2 - 2\delta n + n^2}} < \frac{-n\left(1-\left(\frac{1}{n}\right) n\right)}{\sqrt{\left(\frac{1}{n}\right)^2n^2 - 2\left(\frac{1}{n}\right) n + n^2}} = 0,
    \end{align*}
    so we have
    \begin{align*}
         \gamma = -\alpha(1-\delta n)-(n-1) < - (n-1) < 0.
    \end{align*}
    Similarly,
    \begin{align*}
        -\alpha (1-\delta n) = \frac{-n(1-\delta n)}{\sqrt{\delta^2n^2 - 2\delta n + n^2}} \geq \frac{-n(1-\left(0\right) n)}{\sqrt{\left(0\right)^2n^2 - 2\left(0\right) n + n^2}} = -1,
    \end{align*}
    so
    \begin{align*}
        \gamma = -\alpha(1-\delta n)-(n-1) \geq -1 - (n-1) = -n.
    \end{align*}
\end{proof}

\begin{lemma} \label{lemma:min-pbasis-gram}
    Let $0 \leq \delta < \frac{1}{n}$. Let 
    \begin{align*}
        P^\delta = [d^\delta_1 \ \cdots \ d^\delta_{n+1}] \text{ and }
        P^0 = \begin{bmatrix}d_1 \ \cdots \ d_{n+1}\end{bmatrix}
    \end{align*}
    be as defined in Theorem~\ref{thm:n+1-arbitrary-set}, and $\alpha, \beta, \gamma$ be as defined in Lemma~\ref{lemma:min-pos-basis}.
    Let $\B = \{d^\delta_2, \dots, d^\delta_{n+1}\}$ and $\T = \{d^\delta_1, \dots,d^\delta_{n+1}\} \setminus \{d^\delta_i\}$ for some $i \in \{2, \dots, n+1\}$.
    Let $B$ and $T$ be the matrices constructed using the vectors in $\B$ and $\T$ as columns, respectively. Then,
    \begin{enumerate}[label={(\roman*)}]
        \item for any choice of $i \in \{2, \dots, n+1\}$ in the construction of $\T$, we have 
        $$\gram(T) = \begin{bmatrix}
            1 & \lambda & \cdots & \lambda \\
            \lambda & 1 & \cdots & \beta \\
            \vdots & \vdots & \ddots & \vdots \\ 
            \lambda & \beta & \cdots & 1
            \end{bmatrix},$$
            where $\lambda = \alpha\left(-\frac{1}{n} + \delta \right)$, \label{lemma:min-pbasis-gram:invariant}
        \item $\gram^{-1}(B) = \left(\frac{1}{1-\beta}\right)I - \left(\frac{\beta}{(1-\beta)^2\left(1 + \frac{\beta n}{1-\beta}\right)}\right)e^{n \times n},$ \label{lemma:min-pbasis-gram:B-inv}
        \item $\gram^{-1}(T) = M^{-1} \gram^{-1}(B) M^{-\top}$ where $$ M^{-1}=\begin{bmatrix}
            -\alpha(1-\delta n) & \mathbb{0}^{1 \times (n-1)} \\
            -e^{(n-1) \times 1} & I^{(n-1) \times (n-1)}
        \end{bmatrix}.$$ \label{lemma:min-pbasis-gram:S-inv}
    \end{enumerate}
\end{lemma}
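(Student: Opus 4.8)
The plan is to reduce all three parts to the elementary inner-product data of the canonical uniform simplex and then to a short matrix computation. The facts I would record first are: since $d_1$ is supported only on its first coordinate and $\|d_1\|=1$, we have $a_1=1$ and $d_1=d_1^\delta=e_1$; by Theorem~\ref{thm:optimal-max-cm-for-min-pbasis}, $\|d_i\|=1$ and $d_i^\top d_j=-\tfrac1n$ for $i\ne j$, whence $\big\|\sum_{i=1}^{n+1}d_i\big\|^2=(n+1)+(n+1)n(-\tfrac1n)=0$ and therefore $\sum_{i=1}^{n+1}d_i=\0$; by Lemma~\ref{lemma:min-pos-basis}~\ref{lemma:min-pos-basis:normalization} and \ref{lemma:min-pos-basis:beta-equal-dd}, $\|d_i^\delta\|=1$ for every $i$ and $(d_j^\delta)^\top d_k^\delta=\beta$ for distinct $j,k\in\{2,\dots,n+1\}$; and a one-line computation using $d_1=e_1$ gives $d_1^\top d_j^\delta=\alpha(d_1^\top d_j+\delta\, d_1^\top e_1)=\alpha(-\tfrac1n+\delta)=\lambda$ for $j\ge 2$.

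Part (i) is then immediate: ordering $\T$ so that $d_1^\delta$ is first, its Gram matrix has $1$ on the diagonal, $\lambda$ across the first row and column, and $\beta$ in every remaining off-diagonal entry; because $d_2^\delta,\dots,d_{n+1}^\delta$ are pairwise interchangeable (equal norms, equal pairwise products, equal product with $d_1^\delta$), this matrix does not depend on which index $i\ge2$ is deleted. For part (ii), the same data gives $\gram(B)=(1-\beta)I+\beta\,e^{n\times n}=(1-\beta)I+(\beta e)e^\top$, so I would invoke the rank-1 update formula with $A=(1-\beta)I$, $u=\beta e$, $v=e$. The required non-degeneracy — $1-\beta\ne0$ and $1+\tfrac{\beta n}{1-\beta}\ne0$ — is precisely Lemma~\ref{lemma:min-pos-basis}~\ref{lemma:min-pos-basis:beta-bound}, and simplifying with $ee^\top=e^{n\times n}$ and $e^\top e=n$ produces the stated formula for $\gram^{-1}(B)$ (which also certifies that $B$ is nonsingular).

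For part (iii), by part (i) it suffices to handle one convenient deletion, say $i=2$ with $\T$ listed as $d_1^\delta,d_3^\delta,\dots,d_{n+1}^\delta$. The crux is to express $d_1^\delta$ in the basis $\B$: from $\sum_{i=1}^{n+1}d_i=\0$ and $d_1=e_1$,
\[
\sum_{j=2}^{n+1}d_j^\delta \;=\; \alpha\sum_{j=2}^{n+1}\big(d_j+\delta e_1\big) \;=\; -\alpha d_1+\alpha n\delta\,d_1 \;=\; -\alpha(1-n\delta)\,d_1^\delta,
\]
so $d_1^\delta=\tfrac{-1}{\alpha(1-n\delta)}\sum_{j=2}^{n+1}d_j^\delta$, which is well defined since $\delta<\tfrac1n$. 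Consequently $T=BM$, where the first column of $M$ is $\tfrac{-1}{\alpha(1-n\delta)}e$ and its columns $2,\dots,n$ are $e_2,\dots,e_n$, i.e.
\[
M=\begin{bmatrix} \dfrac{-1}{\alpha(1-n\delta)} & \mathbb{0}^{1\times(n-1)} \\ \dfrac{-1}{\alpha(1-n\delta)}\,e^{(n-1)\times1} & I^{(n-1)\times(n-1)} \end{bmatrix}.
\]
Then $\gram(T)=T^\top T=M^\top\gram(B)M$, and since both $M$ and $\gram(B)$ are invertible, $\gram^{-1}(T)=M^{-1}\gram^{-1}(B)M^{-\top}$; a direct block multiplication $M^{-1}M=I$ confirms that $M^{-1}$ is the matrix in the statement. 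I expect the only genuine obstacle to be organizational: keeping the column orderings of $T$ and $B$ aligned so that $M$ emerges in this shape, and noticing that it is the linear relation $\sum_{i=1}^{n+1}d_i=\0$ of the simplex that lets $d_1^\delta$ be written through $\B$ in the first place; the remaining work is the rank-1 update and routine bookkeeping.
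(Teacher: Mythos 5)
Your proposal is correct and follows essentially the same route as the paper: compute the inner products $d_1^{\delta\top}d_j^\delta=\lambda$ and $(d_j^\delta)^\top d_k^\delta=\beta$ for (i), apply the rank-1 update to $(1-\beta)I+\beta e^{n\times n}$ for (ii), and use the relation $d_1^\delta=-\tfrac{1}{\alpha(1-\delta n)}\sum_{j\ge2}d_j^\delta$ to write $T=BM$ and conclude $\gram^{-1}(T)=M^{-1}\gram^{-1}(B)M^{-\top}$ for (iii). You merely make explicit a few steps the paper leaves implicit (that $\sum_{i=1}^{n+1}d_i=\0$, the nondegeneracy conditions for the rank-1 update, and the block check $M^{-1}M=I$), which is fine.
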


\begin{proof}~\ref{lemma:min-pbasis-gram:invariant}
     First note that for any $2 \leq j,k \leq n+1$ where $j \not = k$ we have
    \begin{align*}
        d_1^{\delta\top} d^{\delta}_j = d_1^\top (\alpha(d_j+\delta e_1)) = a_1 \left(\alpha \left(-\frac{a_1}{n}+\delta\right)\right) = \alpha \left(-\frac{1}{n} + \delta\right).
    \end{align*}
    For convenience, define $\lambda = \alpha\left(-\frac{1}{n} + \delta\right)$. Recall from Lemma~\ref{lemma:min-pos-basis}~\ref{lemma:min-pos-basis:beta-equal-dd} that ${d_j^\delta}^\top d_k^\delta = \beta$ for all $2 \leq j,k \leq n+1$ where $j \not = k$. 
    So for any choice of $i \in \{2, \dots, n+1\}$ we have 
    \begin{align*}
        \gram(T) = T^\top T = \begin{bmatrix}
        1 & \lambda & \cdots & \lambda \\
        \lambda & 1 & \cdots & \beta \\
        \vdots & \vdots & \ddots & \vdots \\ 
        \lambda & \beta & \cdots & 1
        \end{bmatrix}.
\end{align*}
\ref{lemma:min-pbasis-gram:B-inv} We have
    \begin{align*}
        \gram(B) = B^\top B = \begin{bmatrix}
        1 & \beta & \cdots & \beta \\
        \beta & 1 & \cdots & \beta \\
        \vdots & \vdots & \ddots & \vdots \\ 
        \beta & \beta & \cdots & 1
        \end{bmatrix} = (1-\beta)I + \beta e^{n \times n},
    \end{align*}
    and via the rank-1 update formula
    \begin{align*}
        \gram(B)^{-1} = \left(\frac{1}{1-\beta}\right)I - \left(\frac{\beta}{(1-\beta)^2\left(1 + \frac{\beta n}{1-\beta}\right)}\right)e^{n \times n}.
    \end{align*}
    
   ~\ref{lemma:min-pbasis-gram:S-inv}
    By~\ref{lemma:min-pbasis-gram:invariant}, we may consider any $i \in \{2, \dots, n+1\}$ in the construction of $\Sset$. Without loss of generality, fix $i = 2$ so $\T = \{d^\delta_1, \dots, d^\delta_{n+1}\} \setminus \{d^\delta_2\}$. First note that 
    \begin{align*}
        d^\delta_1 = e_1 = -\frac{1}{\alpha(1-\delta n)} \sum_{2 \leq j \leq n+1} d^\delta_j,
    \end{align*}
    so we may write
    \begin{align*}
        T =  \begin{bmatrix}
            e_1 & d^\delta_3 & \cdots & d^\delta_{n+1}
        \end{bmatrix} = B \begin{bmatrix}
            -\frac{1}{\alpha(1-\delta n)} & \mathbb{0}^{1 \times (n-1)} \\
            -\frac{1}{\alpha(1-\delta n)} e^{(n-1) \times 1} & I^{(n-1) \times (n-1)}
        \end{bmatrix}.
    \end{align*}
    Thus 
        $$\gram^{-1}(T) = ((BM)^\top BM)^{-1}
        = M^{-1}(B^\top B)^{-1} M^{-\top}= M^{-1} \gram^{-1}(B) M^{-\top}$$
    where, by Gaussian elimination, we can easily find 
    \begin{align*}
        M^{-1}=\begin{bmatrix}
            -\alpha(1-\delta n) & \mathbb{0}^{1 \times (n-1)} \\
            -e^{(n-1) \times 1} & I^{(n-1) \times (n-1)}
        \end{bmatrix}.
    \end{align*}
\end{proof}

We are now ready to prove Theorem~\ref{thm:n+1-arbitrary-set}.

\begin{proof} (Theorem~\ref{thm:n+1-arbitrary-set})
    Let us first show $\Pset^\delta$ is positive spanning. By Theorem~\ref{thm:pos-spanning-sum-equals-0} it suffices to show $(1-\delta n)d^\delta_1 + \sum_{i=2}^{n+1}\left(\frac{1}{\alpha}\right)d^\delta_i = \0$ since $1-\delta n >0$ and $\frac{1}{\alpha} > 0$. Note 
    \begin{align*}
         (1-\delta n)d^\delta_1 &+ \sum_{i=2}^{n+1}\left(\frac{1}{\alpha}\right)d^\delta_i = \left(1-\delta n\right)d^\delta_1 + \left(\frac{1}{\alpha}\right)d^\delta_2 + \dots+  \left(\frac{1}{\alpha}\right)d^\delta_{n+1}\\
         &= \left(1-\delta n\right)d_1 + \left(\frac{1}{\alpha}\right)\alpha(d_2+\delta e_1) + \dots+  \left(\frac{1}{\alpha}\right)\alpha(d_{n+1}+\delta e_1)\\
         &= -\delta n d_1 + \delta n e_1 + (d_1 + \dots d_{{n+1}}) \\
         &= -\delta n d_1 + \delta n e_1 \quad \because {\sum_{i=1}^{n+1} d_i = 0}\\
         &= 0.
    \end{align*}
    {Since $\Pset^\delta$ contains $n+1$ vectors, no proper subset of $\Pset^\delta$ positively spans $\R^n$, so $\Pset^\delta$ is a positive basis.}
    
    Let $\B = \{d^\delta_2, \dots, d^\delta_{n+1}\}$ and $g = -e_1$. By Lemma~\ref{lemma:min-pos-basis}~\ref{lemma:min-pos-basis:normalization} we know the columns of $\B$ are unit vectors. Applying Lemma~\ref{theorem:all-inner-prod-equal-implies-gram} and noting
    \begin{align} \label{eqn:gram-value}
        g^\top{d^\delta_i} = -e_1^\top d^\delta_i = -\alpha\left(-\frac{a_1}{n} + \delta \right) = \alpha\left(\frac{1}{n} - \delta \right) > 0,
    \end{align}
    for all $2 \leq i \leq n+1$, we see that $g$ is the Gram vector of $\B$. Additionally, $g^\top d^\delta_1 = -1(a_1) = -1$, so 
    \begin{align*} 
        \cm(\Pset^\delta, g) = \max_{d \in \Pset^\delta} d^\top g = \max\left\{-1, \alpha\left(\frac{1}{n} - \delta \right)\right\} = \alpha\left(\frac{1}{n} - \delta \right).
    \end{align*}
    We show $g$ is optimal by comparing the Gram value of $\B$ with the Gram value of all other possible sub-bases of $\Pset^\delta$, which is sufficient by Corollary~\ref{cor:necessary-cond-gram-vector}. Let $\T \subset \Pset^\delta$ be arbitrarily chosen. Since the columns in $B$ and $T$ are unit vectors by Lemma~\ref{lemma:min-pos-basis}~\ref{lemma:min-pos-basis:normalization}, we may define $\gram_1 = \gram(B)$ and $\gram_2 = \gram(T)$. In particular we wish to show 
   \begin{equation}\frac{1}{\sqrt{e^\top \gram_1^{-1}}e} \leq \frac{1}{\sqrt{e^\top \gram_2^{-1}}e}.\label{eqn:mpb-main-inequality}\end{equation}
    If $\T \not = \B$, we may reorder the columns of $T$ such that $d^\delta_1$ is the first column. By Lemma~\ref{lemma:min-pbasis-gram}~\ref{lemma:min-pbasis-gram:invariant}, for any choice of $i \in \{2, \dots, n+1\}$ the set $\Sset$ yields the same Gram matrix, so without loss of generality we can fix $i=2$ so $\T = \{d^\delta_1, \dots, d^\delta_{n+1}\} \setminus \{d^\delta_2\}$.  
    Then, by Lemma~\ref{lemma:min-pbasis-gram}~\ref{lemma:min-pbasis-gram:S-inv} we may write 
    $\gram_2 = M^{-1} \gram^{-1}_1 M^{-\top}$ where 
    \begin{align*}
        M^{-1} = \begin{bmatrix}
            -\alpha(1-\delta n) & \mathbb{0}^{1 \times (n-1)} \\
            -e^{(n-1) \times 1} & I^{(n-1) \times (n-1)}
        \end{bmatrix}.
    \end{align*}
    For convenience let $$z^\top = e^\top M^{-1} = \begin{bmatrix}
        \gamma & 1 & \cdots & 1
    \end{bmatrix},$$ where $\gamma = -\alpha(1-\delta n) - (n-1)$. We now prove inequality \eqref{eqn:mpb-main-inequality}, as
    \begin{align*}
        &\frac{1}{\sqrt{e^\top \gram_1^{-1} e}} \leq \frac{1}{\sqrt{e^\top \gram^{-1}_2 e}} \\
        &\iff e^\top \gram_1^{-1} e \geq e^\top \gram_2^{-1} e \\
        &\iff e^\top \gram_1^{-1} e \geq z^\top \gram_1^{-1} z \\
        &\iff \sum_{i,j} [\gram_1^{-1}]_{i,j} - \sum_{2 \leq i,j \leq n} [\gram_1^{-1}]_{i,j} \\
        & \qquad \quad \geq \gamma \sum_{2 \leq i \leq n} [\gram_1^{-1}]_{i,1} + \gamma \sum_{2 \leq j \leq n} [\gram_1^{-1}]_{1,j} + \gamma^2 [\gram_1^{-1}]_{1,1} \\
        &\iff \sum_{2 \leq i \leq n} [\gram_1^{-1}]_{i,1} + \sum_{2 \leq j \leq n} [\gram_1^{-1}]_{1,j} +  [\gram_1^{-1}]_{1,1} \\
        &\qquad \quad \geq  2(n-1)\gamma [\gram_1^{-1}]_{2,1}  + \gamma^2 [\gram_1^{-1}]_{1,1} \\
        &\iff 2(n-1)(1-\gamma) [\gram_1^{-1}]_{2,1}  + (1-\gamma^2) [\gram_1^{-1}]_{1,1} \geq 0 \\
        &\iff 2(n-1)(1-\gamma)\left(\frac{-\beta}{(1-\beta)^2\left(1 + \frac{\beta n}{1-\beta}\right)}\right) \\
        & \qquad + (1-\gamma^2) \left(\frac{1}{1-\beta} - \frac{\beta}{(1-\beta)^2\left(1 + \frac{\beta n}{1-\beta}\right)}\right)\geq 0 \quad \text{by Lemma~\ref{lemma:min-pbasis-gram}~\ref{lemma:min-pbasis-gram:B-inv}} \\
        &\iff 1-\beta n + \gamma(1+(n-2)\beta) \geq 0.
    \end{align*}
    Let us inspect the last inequality. We know from Lemma~\ref{lemma:min-pos-basis}~\ref{lemma:min-pos-basis:beta-bound} that $\beta \leq - \frac{1}{n}$ so it follows that $1- \beta n \geq 2$. In addition
    \begin{align*}
        \beta \leq -\frac{1}{n} \implies 1 + (n-2) \beta \leq \frac{2}{n} \implies \gamma(1 + (n-2)\beta) \geq \frac{2\gamma}{n} \geq -2
    \end{align*}
    where the last inequality holds since $\gamma \geq -n$ by Lemma~\ref{lemma:min-pos-basis}~\ref{lemma:min-pos-basis:gamma-bound}. Adding $1-\beta n \geq 2$ and $\gamma(1 + (n-2)\beta) \geq -2$ yields the desired result.
\end{proof}
\begin{corollary}\label{corollary:min-pos-basis-delta-choice}
    Let $n \geq 2$. For any $0 < c \leq \frac{1}{n}$ we can construct a positive basis $P$ such that $\cm(P) = c$ by setting $\delta = \frac{1}{n} + \frac{\sqrt{-(n^2-1)(c^4 - c^2)}}{n(c^2-1)}$ in the construction above.
\end{corollary}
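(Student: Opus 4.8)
The plan is to reduce everything to Theorem~\ref{thm:n+1-arbitrary-set}: for every $\delta$ with $0 \le \delta < \frac{1}{n}$, the set $\Pset^\delta$ is a positive basis with $\cm(\Pset^\delta) = \frac{1-\delta n}{\sqrt{n^2\delta^2 - 2n\delta + n^2}}$. So it suffices to establish two facts about the prescribed value $\delta^* = \frac{1}{n} + \frac{\sqrt{-(n^2-1)(c^4-c^2)}}{n(c^2-1)}$: first, that $\delta^* \in [0,\frac{1}{n})$, so that Theorem~\ref{thm:n+1-arbitrary-set} is applicable; and second, that substituting $\delta = \delta^*$ into the cosine-measure formula yields exactly $c$.

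First I would simplify the radical. Since $0 < c \le \frac{1}{n} < 1$, we have $c^2 - 1 < 0$ and $c^4 - c^2 = c^2(c^2-1) < 0$, hence $-(n^2-1)(c^4-c^2) = (n^2-1)c^2(1-c^2) \ge 0$, so the expression is real and $\sqrt{-(n^2-1)(c^4-c^2)} = c\sqrt{(n^2-1)(1-c^2)}$. Dividing by $n(c^2-1) = -n(1-c^2)$ gives
\begin{align*}
  \delta^* = \frac{1}{n} - \frac{c}{n}\sqrt{\frac{n^2-1}{1-c^2}} = \frac{1}{n} - \frac{1}{n}\sqrt{\frac{(n^2-1)c^2}{1-c^2}}.
\end{align*}
The subtracted term is nonnegative, and strictly positive because $c>0$ and $n\ge 2$, so $\delta^* < \frac{1}{n}$. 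For $\delta^* \ge 0$ we need $\frac{(n^2-1)c^2}{1-c^2} \le 1$, i.e.\ $(n^2-1)c^2 \le 1-c^2$, i.e.\ $n^2 c^2 \le 1$, i.e.\ $c \le \frac{1}{n}$ --- which is exactly the hypothesis. Hence $\delta^* \in [0,\frac{1}{n})$.

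Next I would verify the cosine-measure value. Because $1 - \delta^* n > 0$ (as $\delta^* < \frac{1}{n}$) and the denominator $\sqrt{n^2(\delta^*)^2 - 2n\delta^* + n^2}$ is positive, on the range $\delta \in [0,\frac{1}{n})$ the equation $\frac{1-\delta n}{\sqrt{n^2\delta^2 - 2n\delta + n^2}} = c$ is equivalent to its square $(1-\delta n)^2 = c^2(n^2\delta^2 - 2n\delta + n^2)$. Expanding and collecting terms gives $n^2(1-c^2)\delta^2 - 2n(1-c^2)\delta + (1 - c^2 n^2) = 0$; dividing by $1-c^2 > 0$ and solving the resulting quadratic in $n\delta$ yields $n\delta = 1 \pm \sqrt{\frac{(n^2-1)c^2}{1-c^2}}$. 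The root lying in $[0,n)$ is the one with the minus sign, which is precisely $n\delta^*$ as computed above. Therefore $\cm(\Pset^{\delta^*}) = c$, completing the proof.

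I expect no genuine obstacle: the argument is a direct substitution into Theorem~\ref{thm:n+1-arbitrary-set} followed by solving a quadratic. The only points needing care --- and worth highlighting --- are the sign bookkeeping when clearing the square root and selecting the correct root, together with the observation that the feasibility requirement $\delta^* \ge 0$ coincides exactly with the bound $c \le \frac{1}{n}$, consistent with Theorem~\ref{thm:optimal-max-cm-for-min-pbasis}, which forces $\cm \le \frac{1}{n}$ for any minimal positive basis.
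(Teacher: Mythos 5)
Your proposal is correct, and it is the natural completion of what the paper leaves unproved: the corollary is stated as an immediate consequence of Theorem~\ref{thm:n+1-arbitrary-set}, the intended argument being exactly your direct substitution, i.e.\ solving $\frac{1-\delta n}{\sqrt{n^2\delta^2-2n\delta+n^2}}=c$ as a quadratic in $n\delta$ and selecting the root in $[0,1)$. Your additional checks --- that the radicand is nonnegative, that $\delta^*<\frac{1}{n}$, and that $\delta^*\ge 0$ is equivalent to $c\le\frac{1}{n}$ --- are precisely the sign bookkeeping needed to make this rigorous, so nothing is missing.
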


\subsection{Maximal Positive Basis}\label{sec:maximal-delta-angle-pb}
Here we introduce the construction of a maximal positive basis, along with the corresponding cosine measure. The maximal canonical positive basis can be traced back to \cite{davis-positive-independence}, and in \cite{naevdal_positive_2019} it was shown to have maximal cosine measure, that is for all positive bases of size $2n$, the canonical maximal positive basis yields the largest cosine measure. 

\begin{definition} (Canonical Maximal Positive Basis)
    The canonical maximal positive basis of dimension $n$ is defined to be $$\Pset =\{e_1,e_2,\dots,e_n,-e_1,\dots,-e_n\}.$$
\end{definition}

\begin{theorem}\cite[Theorem 2]{naevdal_positive_2019}\label{thm:max-cm-for-max-pbasis}
    Let $$\Pset = \{d_1, d_2,...,d_n, -\lambda_1d_1, -\lambda_2d_2,...,-\lambda_nd_n \}$$ where $\lambda_i > 0$ for $1 \leq i \leq n$ be a (maximal) positive basis for $\R^n$. Then the following holds: 
    \begin{enumerate}
        \item $\cm(\Pset) \leq \frac{1}{\sqrt n}$,
        \item $\cm(\Pset) = \frac{1}{\sqrt n} \iff d^\top_i d_j = 0$ for $i\not= j$ and $1 \leq i, j \leq n$.
    \end{enumerate}
\end{theorem}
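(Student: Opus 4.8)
The plan is to normalise the set and then read everything off the quadratic-program reformulation of Theorem~\ref{thm:reformulation}. Since $\cm$ depends only on the normalised vectors $d/\|d\|$, I may assume $\lambda_i=1$ and $\|d_i\|=1$ for every $i$, so that $\Pset=\{d_1,\dots,d_n,-d_1,\dots,-d_n\}$; the orthogonality condition in part~(2) is unaffected by this rescaling. Because $\Pset$ positively spans $\R^n$ and $\operatorname{span}(\Pset)=\operatorname{span}(\{d_1,\dots,d_n\})$, the vectors $d_1,\dots,d_n$ are linearly independent, so $D:=[d_1\ \cdots\ d_n]$ is invertible, has unit columns (hence $\operatorname{tr}(D^\top D)=n$), and the associated polytope is $P=\{x: x^\top d\le 1,\ d\in\Pset\}=\{x:|d_i^\top x|\le 1,\ i=1,\dots,n\}$. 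By Theorem~\ref{thm:reformulation} together with Lemma~\ref{thm:existence-polytope}, $\cm(\Pset)=1/\sqrt{M}$ where $M:=\max_{x\in P}\|x\|^2<\infty$, so the whole statement reduces to showing $M\ge n$, with equality if and only if $d_i^\top d_j=0$ for $i\ne j$.

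For the bound $M\ge n$ I would exhibit an explicit feasible point. Set $A:=(D^\top D)^{-1}\succ 0$, pick $s^\ast\in\argmax_{s\in\{-1,1\}^n}s^\top A s$, and let $w:=(D^\top)^{-1}s^\ast$. Then $d_i^\top w=e_i^\top\!\big(D^\top (D^\top)^{-1}\big)s^\ast=s^\ast_i\in\{-1,1\}$, so $w\in P$, and $\|w\|^2=s^{\ast\top}(D^\top D)^{-1}s^\ast=s^{\ast\top}A s^\ast$. The key observation is an averaging identity: if $s$ is uniform on $\{-1,1\}^n$ then $\mathbb E[s^\top A s]=\sum_{i,j}A_{ij}\,\mathbb E[s_is_j]=\operatorname{tr}(A)$, hence $s^{\ast\top}As^\ast\ge\operatorname{tr}(A)$. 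Finally, if $\nu_1,\dots,\nu_n>0$ are the eigenvalues of $D^\top D$ then $\sum_i\nu_i=n$, and the AM--HM inequality gives $\operatorname{tr}(A)=\sum_i 1/\nu_i\ge n$, with equality if and only if all $\nu_i=1$. Chaining these, $M\ge\|w\|^2\ge\operatorname{tr}(A)\ge n$, which proves part~(1).

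For part~(2), the ``if'' direction is a direct computation: $d_i^\top d_j=0$ for $i\ne j$ together with $\|d_i\|=1$ means $D^\top D=I$, i.e.\ $D$ is orthogonal, so $M=\max_{\|y\|_\infty\le 1}\|(D^\top)^{-1}y\|^2=\max_{\|y\|_\infty\le1}\|y\|^2=n$ and hence $\cm(\Pset)=1/\sqrt n$. For the ``only if'' direction, assume $\cm(\Pset)=1/\sqrt n$, i.e.\ $M=n$. Combined with the chain $M\ge\|w\|^2\ge\operatorname{tr}(A)\ge n$ from the previous paragraph, every inequality there is an equality; in particular $\operatorname{tr}(A)=n$, so by the equality case of AM--HM all eigenvalues of $D^\top D$ equal $1$. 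A symmetric positive definite matrix with all eigenvalues equal to $1$ is the identity, so $D^\top D=I$, i.e.\ $d_i^\top d_j=0$ for $i\ne j$.

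I expect the delicate points to be administrative rather than conceptual: justifying the reduction to unit $d_i$ forming a basis (positive spanning forces linear independence of the $n$ ``positive'' directions), correctly identifying $P$ and the identity $\cm(\Pset)=1/\sqrt M$ coming from Theorem~\ref{thm:reformulation}, and handling the equality case of AM--HM cleanly so that the forward direction of part~(2) genuinely yields $D^\top D=I$ and not merely a weaker spectral condition. The one genuine idea is the expectation identity $\mathbb E[s^\top A s]=\operatorname{tr}(A)$ over sign vectors, which simultaneously delivers the bound and, through its equality analysis, the characterisation.
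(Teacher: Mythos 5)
Your argument is correct, and it is worth noting that the paper offers no proof of Theorem~\ref{thm:max-cm-for-max-pbasis} at all: the result is imported from N{\ae}vdal's paper, so there is no in-text proof to match against. Your chain holds up under scrutiny: the reduction to $\lambda_i=1$, $\|d_i\|=1$ is legitimate because the cosine measure and the orthogonality condition in part~(2) depend only on the directions $d/\|d\|$ (and positive spanning survives positive rescaling, Proposition~\ref{thm:pos-scale-spanning-set}); positive spanning forces $d_1,\dots,d_n$ to be a basis, so $D$ is invertible with $\operatorname{tr}(D^\top D)=n$; the identity $\cm(\Pset)=1/\sqrt{M}$ with $M=\max_{x\in P}\|x\|^2$ is exactly what the proof of Theorem~\ref{thm:reformulation} (via Theorem~\ref{thm:both-feasible} and Lemma~\ref{thm:existence-polytope}) provides; the point $w=(D^\top)^{-1}s^\ast$ satisfies $|d_i^\top w|=1$, hence is feasible, with $\|w\|^2=s^{\ast\top}(D^\top D)^{-1}s^\ast$; and the sign-averaging identity $\mathbb{E}[s^\top A s]=\operatorname{tr}(A)$ together with AM--HM gives $M\ge\operatorname{tr}(A)\ge n$, with the equality analysis correctly forcing all eigenvalues of $D^\top D$ to equal $1$ and hence $D^\top D=I$ in the ``only if'' direction of part~(2).

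Compared with the cited source, which works directly with Gram matrices and Gram values of the sub-bases obtained by sign choices, your proof is essentially the same computation repackaged in the language of the paper's own QP reformulation: for this $\pm$-symmetric set the vertices of $P$ are precisely the points $(D^\top)^{-1}\sigma$, $\sigma\in\{-1,1\}^n$, which by Theorem~\ref{thm:vertex-to-gram} correspond to the Gram vectors of the sign-flipped bases, and the quantity $\sigma^\top(D^\top D)^{-1}\sigma$ you maximize is exactly $e^\top\gram(B_\sigma)^{-1}e$, the reciprocal of the squared Gram value. What your route buys is self-containedness within this paper's framework: the norm-maximization viewpoint makes both the candidate solutions (the sign-vector vertices) and the equality case completely transparent, at the cost of invoking the reformulation machinery; the Gram-matrix route of the original reference needs no polytope but requires the Gram-value characterization of cosine vectors (Corollary~\ref{cor:necessary-cond-gram-vector}) instead.
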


As with the previous section we seek to create a maximal positive basis with varying cosine measure. Using Theorem~\ref{thm:test-set-2n} and Corollary~\ref{corollary:2n-delta-choice}, we may construct any positive basis of size $2n$ with cosine measure in $(0,\frac{1}{\sqrt n}]$. 
Thus our theorem enables us to construct a positive basis for all possible cosine measures in that range.

The intuition of the theorem follows similarly to the previous section. Starting with the canonical maximal positive basis, we note that the cosine vector is $\frac{e}{\|e\|}$ and shift the vectors in the direction $e$ and $-e$ to ``widen the gap'', decreasing the cosine measure. 


The following theorem will allow us to construct sets of size $2n$ with arbitrary cosine measure. 
The subsequent corollary to the presented theorem provides a concrete formula for choosing the necessary $\delta$ to achieve a particular cosine measure. 

\begin{theorem}\label{thm:test-set-2n}
    Let $n \geq 2$ and $0 \leq \delta < \frac{1}{n}$. Consider the matrix $B = I - \delta e^{n \times n}$ and the set of vectors $\B = \{b_1,b_2,\dots, b_n\}$ defined by the columns of $B$. Define $\alpha = \frac{1}{\sqrt{\delta^2 n - 2\delta + 1}}$ and $\Pset^\delta = \alpha \B \cup (-\alpha \B)$. Then $\Pset^\delta$ is a positive basis with cosine measure $\frac{1-\delta n}{\sqrt{n(\delta^2 n - 2\delta + 1)}}$. 
\end{theorem}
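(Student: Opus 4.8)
The plan is to mirror the four-part strategy used to prove Theorem~\ref{thm:n+1-arbitrary-set}: normalize the set, verify the positive-basis property, exhibit a candidate cosine vector, and certify its optimality via Corollary~\ref{cor:necessary-cond-gram-vector}.

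First I would check $\alpha$ is well-defined and normalizes the columns. Writing $b_i = e_i - \delta e$, a direct computation gives $\|b_i\|^2 = 1 - 2\delta + \delta^2 n$, and $\delta^2 n - 2\delta + 1 = n(\delta - \tfrac1n)^2 + 1 - \tfrac1n > 0$ since $n \ge 2$; hence $\alpha = 1/\|b_i\|$ for every $i$ and every element of $\Pset^\delta$ is a unit vector. Next, since $B = I - \delta ee^\top$, the rank-1 update formula shows $B$ is invertible exactly when $1 - \delta n \ne 0$, which holds because $\delta < \tfrac1n$; hence $\B$ linearly spans $\R^n$. As $\sum_{i=1}^n(\alpha b_i) + \sum_{i=1}^n(-\alpha b_i) = \0$ with strictly positive coefficients, Theorem~\ref{thm:pos-spanning-sum-equals-0} gives that $\Pset^\delta$ positively spans $\R^n$. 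To upgrade ``positive spanning'' to ``positive basis'', I would assume $\alpha b_j \in \pspan(\Pset^\delta \setminus \{\alpha b_j\})$, move everything to one side, and use linear independence of $\{b_1,\dots,b_n\}$ to force the $b_j$-coefficient to be simultaneously strictly positive and nonpositive; the argument for $-\alpha b_j$ is identical.

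For the cosine measure, I would propose the candidate cosine vector $g = e/\sqrt{n}$. For each $i$, $g^\top(\alpha b_i) = \tfrac{\alpha}{\sqrt n}(1 - \delta n) > 0$, so by Lemma~\ref{theorem:all-inner-prod-equal-implies-gram}, $g$ is the Gram vector of the basis $\alpha\B = \{\alpha b_1, \dots, \alpha b_n\} \subseteq \Pset^\delta$ with Gram value $\tfrac{\alpha(1-\delta n)}{\sqrt n}$; since $g^\top(-\alpha b_i) < 0$, this gives $\cm(\Pset^\delta, g) = \tfrac{\alpha(1-\delta n)}{\sqrt n}$, which simplifies to $\tfrac{1-\delta n}{\sqrt{n(\delta^2 n - 2\delta + 1)}}$. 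For optimality, Corollary~\ref{cor:necessary-cond-gram-vector} says $\cm(\Pset^\delta)$ is the Gram value of some sub-basis of $\Pset^\delta$, so it suffices to show $\alpha\B$ has the smallest Gram value among all sub-bases, equivalently that $e^\top \gram(\alpha B)^{-1} e \ge e^\top \gram(B')^{-1} e$ for every sub-basis $B'$. Because $\alpha b_i$ and $-\alpha b_i$ are parallel, every sub-basis is $\alpha B\Diag(\epsilon)$ for some $\epsilon \in \{-1,1\}^n$, and $\gram(\alpha B\Diag(\epsilon))^{-1} = \tfrac{1}{\alpha^2}\Diag(\epsilon)(B^\top B)^{-1}\Diag(\epsilon)$; computing $B^\top B = I + (\delta^2 n - 2\delta)ee^\top$ and applying the rank-1 update gives $(B^\top B)^{-1} = I - \tfrac{\delta^2 n - 2\delta}{(1-\delta n)^2}ee^\top$, so $e^\top \gram(\alpha B\Diag(\epsilon))^{-1} e = \tfrac{1}{\alpha^2}\big(n - \tfrac{\delta^2 n - 2\delta}{(1-\delta n)^2}(e^\top\epsilon)^2\big)$.

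The conceptual crux --- and the only inequality needed --- is that $\delta^2 n - 2\delta = \delta(\delta n - 2) \le 0$ since $\delta n < 1 < 2$; thus the coefficient of $(e^\top\epsilon)^2$ is nonnegative and the expression is maximized over sign vectors precisely when $(e^\top\epsilon)^2 = n^2$, i.e. at $\epsilon = \pm e$, which corresponds to $\pm\alpha\B$. Hence $\alpha\B$ minimizes the Gram value, so $\gamma_{\alpha\B} \le \cm(\Pset^\delta)$; together with $\cm(\Pset^\delta) \le \cm(\Pset^\delta, g) = \gamma_{\alpha\B}$ this yields the claimed value. I expect the main obstacle to be organizational rather than conceptual: correctly enumerating the sub-bases as sign patterns and tracking the scalar $\alpha$ through the Gram-inverse; once $B^\top B$ is recognized as a rank-1 perturbation of the identity with a nonpositive coefficient, optimality is immediate.
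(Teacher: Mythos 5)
Your proposal is correct and takes essentially the approach the paper intends: the paper omits the details, stating only that the proof ``follows a similar process'' to Theorem~\ref{thm:n+1-arbitrary-set}, and your argument is precisely that analogue (normalization via $\alpha$, positive spanning via Theorem~\ref{thm:pspan-char}, the candidate Gram vector $e/\sqrt{n}$ for the sub-basis $\alpha\B$, and optimality by comparing Gram values over all sub-bases, which suffices by Corollary~\ref{cor:necessary-cond-gram-vector}). Your parametrization of the sub-bases by sign vectors $\epsilon \in \{-1,1\}^n$, which reduces the Gram-value comparison to maximizing $(e^\top \epsilon)^2$ against the nonpositive rank-one coefficient $\delta^2 n - 2\delta$, is a correct and notably clean way to carry out the comparison that the minimal-basis proof does by a longer chain of inequalities.
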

\begin{proof}
    The proof follows a similar process to the proof of Theorem~\ref{thm:n+1-arbitrary-set}. 
\end{proof}
\begin{corollary} \label{corollary:2n-delta-choice}
    Let $n \geq 2$. For any $0 < c \leq \frac{1}{\sqrt n}$ we can construct a positive basis $P$ such that $\cm(\Pset) = c$ by setting $\delta = \frac{1}{n} + \frac{\sqrt{(n-1)(c^2 - c^4 )}}{n(c^2-1)}$ in the construction above.
\end{corollary}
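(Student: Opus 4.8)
The plan is to read Theorem~\ref{thm:test-set-2n} backwards: that theorem guarantees that for every $\delta\in[0,\tfrac1n)$ the set $\Pset^\delta$ is a positive basis with
$$\cm(\Pset^\delta)=\frac{1-\delta n}{\sqrt{n(\delta^2 n-2\delta+1)}}=:f(\delta),$$
so it is enough to show that the $\delta$ prescribed in the statement lies in $[0,\tfrac1n)$ and satisfies $f(\delta)=c$. Since $c\le\tfrac1{\sqrt n}\le\tfrac1{\sqrt2}<1$, we may freely divide by $1-c^2>0$ throughout.

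First I would solve $f(\delta)=c$. Squaring gives $(1-\delta n)^2=c^2 n(\delta^2 n-2\delta+1)$; expanding and collecting terms yields the quadratic $n^2(1-c^2)\delta^2-2n(1-c^2)\delta+(1-c^2n)=0$, i.e.\ $\delta^2-\tfrac2n\delta+\tfrac{1-c^2n}{n^2(1-c^2)}=0$. The quadratic formula gives $\delta=\tfrac1n\pm\tfrac1n\sqrt{1-\tfrac{1-c^2n}{1-c^2}}=\tfrac1n\pm\tfrac1n\sqrt{\tfrac{c^2(n-1)}{1-c^2}}$, and, using $c>0$, one checks the algebraic identity $\tfrac1n\sqrt{\tfrac{c^2(n-1)}{1-c^2}}=\tfrac{\sqrt{(n-1)(c^2-c^4)}}{n(1-c^2)}=-\tfrac{\sqrt{(n-1)(c^2-c^4)}}{n(c^2-1)}$. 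Hence the two roots are $\tfrac1n\pm\big(-\tfrac{\sqrt{(n-1)(c^2-c^4)}}{n(c^2-1)}\big)$, and the root appearing in the statement is the one obtained with the minus sign inside $\pm$, i.e.\ the smaller root (recall $c^2-1<0$, so the listed $\delta$ is $\tfrac1n$ minus a positive quantity).

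Next I would verify the range and the sign. Since $c>0$, the square-root term is strictly positive, so $\delta<\tfrac1n$; and $\delta\ge0$ is equivalent to $\tfrac{c\sqrt{n-1}}{n\sqrt{1-c^2}}\le\tfrac1n$, i.e.\ $c^2(n-1)\le 1-c^2$, i.e.\ $c^2n\le1$, which is exactly the hypothesis $c\le\tfrac1{\sqrt n}$. Finally, because squaring can introduce a spurious sign, I would confirm $f(\delta)=c$ rather than $-c$: for $\delta<\tfrac1n$ the numerator $1-\delta n$ is positive, and $\delta^2n-2\delta+1=n(\delta-\tfrac1n)^2+1-\tfrac1n>0$ for $n\ge2$, so $f(\delta)>0$; since $c>0$ as well, $f(\delta)^2=c^2$ forces $f(\delta)=c$. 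Applying Theorem~\ref{thm:test-set-2n} with this $\delta$ then gives $\cm(\Pset)=c$, completing the proof.

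The computation is entirely routine; the only points needing care are selecting the correct root so that $\delta$ stays in $[0,\tfrac1n)$, the sign bookkeeping in the identity $\tfrac1n\sqrt{\tfrac{c^2(n-1)}{1-c^2}}=-\tfrac{\sqrt{(n-1)(c^2-c^4)}}{n(c^2-1)}$ that matches the stated formula, and checking the sign of $f(\delta)$ after un-squaring. I do not anticipate a genuine obstacle here: the corollary is essentially Theorem~\ref{thm:test-set-2n} inverted.
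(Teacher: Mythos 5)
Your proposal is correct and follows exactly the route the paper intends (the paper states this corollary without proof, as it is just Theorem~\ref{thm:test-set-2n} inverted): solve $\frac{1-\delta n}{\sqrt{n(\delta^2 n-2\delta+1)}}=c$ for $\delta$, pick the root lying in $[0,\tfrac{1}{n})$, and check the sign after squaring. Your algebra, the root selection, the verification that $0\le\delta<\tfrac{1}{n}$ precisely when $0<c\le\tfrac{1}{\sqrt n}$, and the positivity argument ruling out the spurious root $-c$ are all sound.
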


\subsection{Intermediate Positive Bases}\label{sec:intermediate-pbasis}
In \cite{hare_nicely_2023}, it was shown how to make an {\em optimal orthongonally structured} positive basis of any size between $n+1$ and $2n$.  

\begin{lemma}\cite[Corollary 34]{hare_nicely_2023}
    Let $\Pset_{n,s}$ be an optimal orthogonal positive basis. Let $\texttt{r} = \rem\left(\frac{n}{s-n}\right)$. Then 
    \begin{align*}
        \cm(\Pset_{n,s}) =  \frac{1}{\sqrt{(s-n-\texttt{r})\lfloor{\frac{n}{s-n}}\rfloor^2 + \texttt{r}\lceil \frac{n}{s-n}\rceil^2}}
    \end{align*}
\end{lemma}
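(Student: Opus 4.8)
The plan is to reduce the statement to two ingredients: the block structure of an optimal orthogonal positive basis, and a formula for the cosine measure of an orthogonal union of positive bases. First I would recall from \cite{hare_nicely_2023} that an optimal orthogonal positive basis $\Pset_{n,s}$ is obtained by splitting $\R^n$ into $b := s-n$ mutually orthogonal coordinate subspaces $V_1, \dots, V_b$ of dimensions $n_1, \dots, n_b$ with $\sum_{i=1}^b n_i = n$, placing a uniform minimal positive basis inside each $V_i$ (contributing $n_i+1$ vectors, so $|\Pset_{n,s}| = \sum_i (n_i+1) = n+b = s$), and choosing the dimensions $n_i$ so that the resulting cosine measure is as large as possible. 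By Theorem~\ref{thm:optimal-max-cm-for-min-pbasis}, the piece sitting inside $V_i$ has cosine measure $1/n_i$ relative to $V_i$.

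Next I would prove the key lemma: if $\Pset = \bigcup_{i=1}^b \Pset_i$ with each $\Pset_i$ a positive basis of a subspace $V_i$, the $V_i$ pairwise orthogonal and summing to $\R^n$, then $\cm(\Pset) = \big(\sum_{i=1}^b \cm(\Pset_i)^{-2}\big)^{-1/2}$. For the lower bound, decompose any unit $v = \sum_i v_i$ with $v_i \in V_i$; since $d^\top v = d^\top v_i$ for $d \in \Pset_i$, one gets $\max_{d \in \Pset} d^\top v = \max_i \max_{d \in \Pset_i} d^\top v_i \ge \max_i \cm(\Pset_i)\|v_i\|$ (using that $v_i/\|v_i\|$ is a unit vector of $V_i$, treating $v_i = \mathbf 0$ separately), and minimizing $\max_i \cm(\Pset_i)\|v_i\|$ over $\sum_i \|v_i\|^2 = 1$ yields exactly $\big(\sum_i \cm(\Pset_i)^{-2}\big)^{-1/2}$ by equalizing the terms $\cm(\Pset_i)\|v_i\|$. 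For the matching upper bound, take $v = \sum_i t_i u_i$ with $u_i$ a cosine vector of $\Pset_i$ in $V_i$ and $t_i$ the optimal weights from the previous step; then $\max_{d \in \Pset} d^\top v$ equals that same value, so the bound is attained.

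Then I would combine the two ingredients. Plugging $\cm(\Pset_i) = 1/n_i$ into the lemma gives $\cm(\Pset_{n,s}) = 1/\sqrt{\sum_{i=1}^b n_i^2}$, so maximizing the cosine measure is equivalent to minimizing $\sum_i n_i^2$ over positive integers $n_1, \dots, n_b$ with $\sum_i n_i = n$. A standard smoothing argument (if two parts differ by at least $2$, transferring one unit from the larger to the smaller strictly decreases $\sum_i n_i^2$) shows the minimum is attained at the most balanced partition, namely $\texttt{r} := \rem(n/(s-n))$ parts equal to $\lceil n/(s-n)\rceil$ and the remaining $s-n-\texttt{r}$ parts equal to $\lfloor n/(s-n)\rfloor$. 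Substituting $\sum_i n_i^2 = \texttt{r}\lceil n/(s-n)\rceil^2 + (s-n-\texttt{r})\lfloor n/(s-n)\rfloor^2$ into $1/\sqrt{\sum_i n_i^2}$ gives the claimed expression; as sanity checks, $s = n+1$ recovers $1/n$ (Theorem~\ref{thm:optimal-max-cm-for-min-pbasis}) and $s = 2n$ recovers $1/\sqrt{n}$ (Theorem~\ref{thm:max-cm-for-max-pbasis}).

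I expect the main obstacle to be making the orthogonal-union cosine measure formula fully rigorous, in particular the lower-bound half, where one must justify that the reduction $\max_{d \in \Pset} d^\top v \ge \max_i \cm(\Pset_i)\|v_i\|$ cannot be improved and that the induced weight-optimization problem has the stated value, together with pinning down exactly which construction \cite{hare_nicely_2023} designates as ``optimal,'' so that the block cosine measures $1/n_i$ and the balancing step are applied to the correct object. The integer optimization (minimizing $\sum_i n_i^2$ with fixed sum and fixed number of parts) is routine convexity/majorization and should not cause trouble.
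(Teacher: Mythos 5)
The paper never proves this statement: it is imported verbatim as \cite[Corollary 34]{hare_nicely_2023}, so there is no internal proof to compare against, and your proposal amounts to reconstructing the cited reference's argument from scratch. As such a reconstruction it is sound. The structural description (split $\R^n$ into $b=s-n$ orthogonal blocks of dimensions $n_i$, place a uniform minimal positive basis in each, so the block has cosine measure $1/n_i$ within its subspace by Theorem~\ref{thm:optimal-max-cm-for-min-pbasis}) matches the construction in \cite{hare_nicely_2023}; your key lemma $\cm\big(\bigcup_i \Pset_i\big)=\big(\sum_i \cm(\Pset_i)^{-2}\big)^{-1/2}$ for positive bases of pairwise orthogonal subspaces is correct, and both halves of your argument check out: the lower bound via $\max_{d\in\Pset_i} d^\top v_i \ge \cm(\Pset_i)\|v_i\|$ and the observation that $\max_i c_i\|v_i\| < t$ with $t=\big(\sum_i c_i^{-2}\big)^{-1/2}$ would force $\sum_i\|v_i\|^2<1$, and the upper bound by assembling $v=\sum_i (t/c_i)u_i$ from block cosine vectors (which exist by Lemma~\ref{thm:existence-cmp}). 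The integer-partition step (minimizing $\sum_i n_i^2$ with $\sum_i n_i=n$ over $s-n$ positive parts, balanced partition optimal, giving $r=\rem(n/(s-n))$ parts of size $\lceil n/(s-n)\rceil$ and $s-n-r$ of size $\lfloor n/(s-n)\rfloor$, including the degenerate case $r=0$) is routine, as you say. The only genuine dependence left is the one you flag yourself: that ``optimal orthogonal positive basis'' in \cite{hare_nicely_2023} indeed designates the balanced-partition, uniform-block construction; that is how the cited work sets things up, so with that definitional point fixed your sketch is a complete and correct proof, just one the present paper chose to outsource to its reference rather than supply.
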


It should be noted that the optimal orthogonally structured positive basis results in very symmetric structure. As such every Gram vector generated by a subbasis of the optimal orthogonally structured positive basis is optimal. We refer the reader to \cite{hare_nicely_2023} for more details. 


\subsection{Random Positive Spanning Set}\label{sec:random-pss}
To further increase the complexity of the test set, we generate a random positive spanning set. Although the cosine measure is unknown for this set, it may be useful to see how well an algorithm performs on a highly asymmetric and unstructured set. We present a theorem which will help us in the construction. 
\begin{theorem}\cite[Theorem 5.1]{regis_properties_2016}(Construction of positive spanning sets from a basis) Let $\B=\{v_1, \dots, v_k\}$ be a basis of $\R^n$ and let $\mathcal {J}=\{J_1,\dots, J_\ell\}$ be a collection of subsets of $K=\{1, \dots, k\}$ such that $\cup_{i=1}^\ell J_i=K$. Then the set 
    $$\B \cup \left\{- \sum_{j \in J_1} \lambda_{1,j}v_j, \dots, -\sum_{j \in J_l}{\lambda_{l,j}}v_j\right\}$$
where $\lambda_{i,j} > 0$ for all $i = 1, \dots, \ell$, $j \in J_i$ is positive spanning. 
\end{theorem}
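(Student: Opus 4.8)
The plan is to apply the characterization of positive spanning sets from Theorem~\ref{thm:pspan-char}~\ref{thm:pos-spanning-sum-equals-0}: a set positively spans $\R^n$ exactly when it linearly spans $\R^n$ and there is a combination of its elements with strictly positive coefficients equal to $\0$. Denote the candidate set by $\Sset = \B \cup \{w_1, \dots, w_\ell\}$, where $w_i = -\sum_{j \in J_i}\lambda_{i,j} v_j$. Since $\B$ is a basis of $\R^n$ we have $k=n$, and the linear spanning requirement is immediate because $\B \subseteq \Sset$ already spans $\R^n$ linearly.

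For the second requirement I would simply add up all the new vectors $w_1, \dots, w_\ell$ with weight $1$ and then reorganize the result as a combination of the $v_j$. Explicitly, $\sum_{i=1}^{\ell} w_i = -\sum_{j \in K}\bigl(\sum_{i\,:\,j \in J_i}\lambda_{i,j}\bigr) v_j$. Setting $c_j = \sum_{i\,:\,j \in J_i}\lambda_{i,j}$ for each $j \in K$, the covering hypothesis $\bigcup_{i=1}^{\ell} J_i = K$ guarantees that for every $j$ the index set $\{i : j \in J_i\}$ is nonempty, and since each $\lambda_{i,j} > 0$ this forces $c_j > 0$. Consequently $\sum_{j \in K} c_j v_j + \sum_{i=1}^{\ell} w_i = \0$ is a combination of all elements of $\Sset$ with strictly positive coefficients, which is precisely what Theorem~\ref{thm:pspan-char}~\ref{thm:pos-spanning-sum-equals-0} needs.

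There is no genuine obstacle in this argument; the only point demanding a little care is the bookkeeping step that converts a sum over the added vectors $w_i$ into a sum over the basis vectors $v_j$, together with the observation that the assumption $\bigcup_i J_i = K$ is exactly what upgrades each coefficient $c_j$ from nonnegative to strictly positive — and strict positivity of all coefficients is what distinguishes positive spanning from mere linear spanning.
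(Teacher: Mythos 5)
Your argument is correct. The paper does not prove this result itself---it is quoted from \cite{regis_properties_2016}---so there is no internal proof to compare against; your verification of the two conditions of Theorem~\ref{thm:pspan-char}~\ref{thm:pspan-char:pos-spanning-sum-equals-0} (linear spanning is immediate from $\B\subseteq\Sset$, and the strictly positive zero combination comes from weighting each added vector $w_i$ by $1$ and collecting the coefficients $c_j=\sum_{i\,:\,j\in J_i}\lambda_{i,j}>0$, which is precisely where the covering hypothesis $\cup_i J_i=K$ is used) is the natural, complete argument.
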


In the context of the previous theorem, in our algorithm to generate a random positive spanning set, we randomize the collection $\mathcal J$, randomize the construction of the basis $\B$, and set $\lambda_{i,j} = 1$ for all $i,j$. The algorithm first generates a matrix with entries distributed uniformly. Summing over the entries in the columns and replacing the diagonal, we create a strictly diagonally dominant matrix, which is invertible and hence the columns form a basis. Then we randomly select columns in $B$ to create new vectors. Once all columns are considered, the algorithm terminates. 



        

        
        
            

            



\section{Numerical Results}\label{sec:numerics}
This section presents the methodology used to benchmark the algorithms introduced in Section~\ref{sec:algos} and the numerical results. Section~\ref{sec:setup} introduces how the test sets were stored, the parameters considered, and the number of tests conducted for each set type. In Section~\ref{sec:results} we discuss our numerical findings. 

\subsection{Setup} \label{sec:setup}
Table~\ref{table:set-summary-v2} outlines all the sets considered in our testing and the number of vectors in each set. Note that we add an augmented version of the maximum $\delta$-shift positive basis, adding an additional $n^2$ number of random vectors to incorporate a test with a large number of vectors.

Before benchmarking, all test cases were randomly rotated and permuted via a uniformly sampled rotation matrix built using the \texttt{special\_ortho\_group} function in \texttt{scipy}. This is then followed by a random reordering of the elements in the set. By Theorem~\ref{thm:rotation-invariant}, this rotation followed by permutation does not change the cosine measure of the set. 

\begin{table}[tbph] 
    \centering
    \caption{A summary of the types of positive bases (PB) and positive spanning sets (PSS) considered in our testing.
    \label{table:set-summary-v2}}
    {
    \begin{tabular}{lll} \toprule 
        Set Type & Cosine Measure & Number of Vectors 
        \\ \midrule 
        Min. Can. PB &  $\frac{1}{\sqrt{n^2+2(n-1)\sqrt{n}}}$ & $s=n+1$  \\
        Min. $\delta$-shift PB  &  $\frac{1-\delta n}{\sqrt{n(\delta^2n - 2 \delta + n)}}$ & $s=n+1$  \\ 
        Max. $\delta$-shift PB & $\frac{1-\delta n}{\sqrt{n(\delta^2 n - 2\delta + 1)}}$ & $s=2n$  \\ 
        Max. Aug. $\delta$-shift PB & $\frac{1-\delta n}{\sqrt{n(\delta^2 n - 2\delta + 1)}}$ & $s=2n + n^2$  \\ 
        Opt. Ortho. PB &  $\frac{1}{\sqrt{(s-n-\texttt{r})\lfloor{\frac{n}{s-n}}\rfloor^2 + \texttt{r}\lceil \frac{n}{s-n}\rceil^2}}$ & $s=\lfloor 1.25n \rfloor, \lfloor 1.75n\rfloor$  \\ 
        Random PSS &  Unknown & $n+1 < s < 2n$  \\ \bottomrule 
    \end{tabular}
    }
\end{table}

For each set type, we consider dimensions $n=$ 10, 13, 15, 18, 21, 24, 27, 30, 40, 50, 60, 70, 80, 90, and 100. The sets are stored as JSON files, using the keys \texttt{matrix} and \texttt{solution}. The former stores the set of vectors as a matrix, where the columns form the vector set. The latter stores a float which is the cosine measure of the set, if known. In general, the directory structure has the form \texttt{Set Type >> Dimension >> Modifier}. Modifiers can include varying parameter values of $\delta$ and set size $s$. We note that the JSON files in the directory contain only the original, non-transformed sets, constructed as described in Section~\ref{sec:testset}. The random rotation and permutation are applied during the benchmarking process and are not reflected in the stored test sets.

For the maximal and minimal $\delta$-shift positive bases, we consider the three parameters $\delta = 0, \frac{1}{2n},$ and $\frac{2}{3n}$. 
The augmented maximum $\delta$-shift positive basis and the random positive spanning set have random components, so three random instances are generated and benchmarked. All other sets are fixed, so only one instance is required. Then, for each instance and parameter, we test three different random rotations and permutations applied on each instance. We repeat this testing for each dimension. Table~\ref{table:set-instances} summarizes the number of tests in our benchmark. For example, for the optimal orthogonal positive basis, we consider 1 instance $\times$ 2 parameters $\times$ $3$ rotations and permutations $\times$ 15 dimensions for a total of $90$ test cases. 

\begin{table}[tbph]
    \centering
    \caption{A summary of the number of tests conducted for each set type.
    \label{table:set-instances}}
    \begin{tabular}{lccc}
    \toprule
    Set Type & Instances & Parameters & Number of Tests \\
    \midrule
    Min. Can. PB & 1 & - & 45 \\
    Min. $\delta$-shift PB & 1 & 3 & 135 \\
    Max. $\delta$-shift PB & 1 & 3 & 135 \\
    Aug. Max. $\delta$-shift PB & 3 & 3 & 405 \\
    Opt. Ortho. PB & 1 & 2 & 90 \\
    Random PSS & 3 & - & 135 \\
    \midrule
    Total & & & 945 \\
    \bottomrule
    \end{tabular}
\end{table}

We test all methods presented in Section~\ref{sec:algos}, running each method for 30 seconds on an Intel i5 10600K processor and 16GB of RAM. If the method is not completed, the best incumbent solution is returned. All methods were written in Python3. To test the vertex enumeration method, we used our own implementation of lexicographical reverse search proposed in \cite{Avis2000-LRS-lexicographical-reverse-search}. 
For the branch-and-bound method, we employed the QCLP and QP formulations of the problem and solved it using Gurobi v12.0.1 with default parameters. Additionally, we evaluated the multithreaded variants of both the Gurobi branch-and-bound solver and the random LP method, using 8 cores. For the random LP method and its multithreaded variant, we ran the algorithm four times and reported the average result. The full CSV of our results and implementations can be found on \href{https://github.com/ScholarSun/Cosine-Measure-Benchmarking}{Github}\footnote{https://github.com/ScholarSun/Cosine-Measure-Benchmarking}. 

\subsection{Results} \label{sec:results}

In this Section, we present accuracy profiles \cite{audet2017derivative} of our benchmarks, where the $x$-axis indicates the number of correct digits produced by the algorithm, and the $y$-axis shows the proportion of the tests considered solved to at least that level of relative accuracy. In summary, our numerical experiments suggest the following:
\begin{enumerate}
    \item non-enumerative methods outperform enumerative methods;
    \item branch-and-bound works better on sets with many vectors whereas random LP works better on smaller sets;
    \item minimal sets and optimal orthogonal sets are easy for enumerative methods, the random LP method, and the QP formulated branch-and-bound;
    \item performance of all algorithms degrade as dimension increases, but the non-enumerative methods scale better;
    \item when applying branch-and-bound, both QP and QCLP formulations perform similarly, except in a few cases.
\end{enumerate}
The items on the list are discussed in Section~\ref{subsec:fulldata}, ~\ref{subsec:large_vector},~\ref{subsec:minimal_sets_numerical},~\ref{subsec:dimensions}, and ~\ref{subsec:formulation} respectively. 

We note that for problems up to dimension 20, the branch-and-bound method consistently returned a solution with an optimality certificate within the 30-second time limit. However, in higher dimensions, no optimality guarantees were returned, effectively rendering this approach a heuristic. 
\subsubsection{Full Data Set}\label{subsec:fulldata}

Figure~\ref{fig:full_set} presents an accuracy profile of the methods across all test sets, excluding the random positive spanning set. Since the random positive spanning set lacks a known cosine measure, we analyze the results for that set separately. The accuracy profile in aggregate suggests that using Gurobi's branch-and-bound method works the best, followed by the random LP, vertex enumeration, basis enumeration, and finally the KKT enumeration. 

\begin{figure}[ht]
    \begin{center}
      \includegraphics[width=0.6\textwidth]{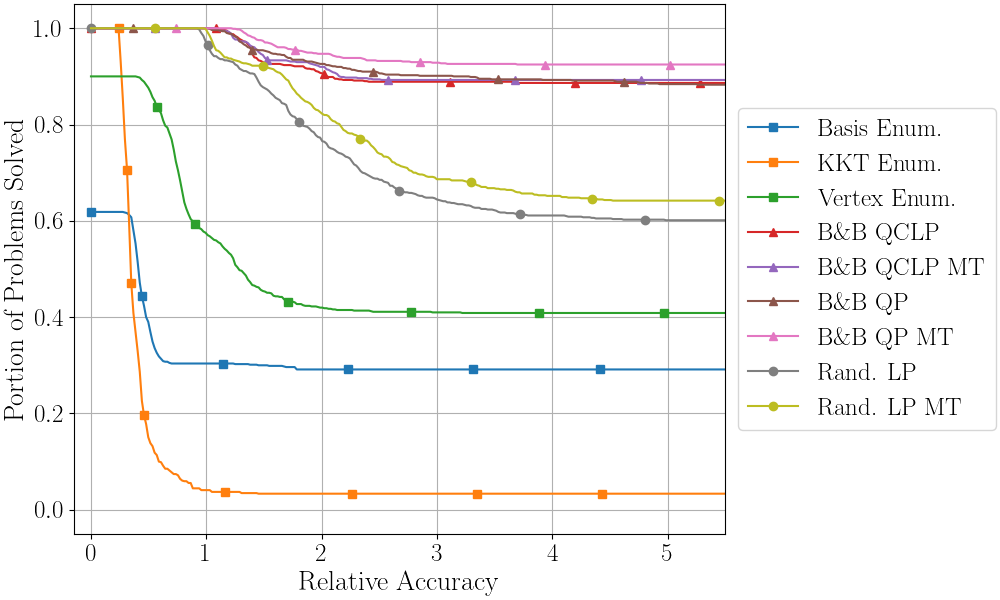}
      \caption{Accuracy profile of the entire test set, without random positive spanning sets. \label{fig:full_set}}
    \end{center}
\end{figure}

The poor performance of the KKT enumeration method is expected, as it  enumerates all subsets of size $2$ to $n$, which is substantially more than the basis enumeration method, which only considers subsets of size $n$. Similarly, the vertex enumeration method outperforms basis enumeration because it only enumerates subsets of size $n$ that correspond to vertices. This drastically reduces the number of subsets to enumerate. These observations are consistent across all our tests. 

Both the branch-and-bound and random LP methods perform substantially better than any enumerative technique, as they do not rely on exhaustively searching through all possible solutions. This trend appears across all tests, with only a few exceptions discussed in the later subsections. We also note that the random LP method and QP formulated branch-and-bound benefit significantly from multithreading, whereas the QCLP formulation sees little improvement with the addition of more cores.

\subsubsection{Impact of the Number of Vectors in the Set} \label{subsec:large_vector}

Figure~\ref{fig:noaug+aug}(a) presents the accuracy profile of the entire test set except the augmented $\delta$-shift maximal positive basis, whereas Figure~\ref{fig:noaug+aug}(b) presents the accuracy profile for the $\delta$-shift augmented maximal positive basis, which includes the tests for all choices of $\delta$. Surprisingly, all branch-and-bound methods achieve perfect performance, most likely due to the fact that adding more vectors decreases the sizes of all non-optimal gaps, allowing the algorithm to discard a large number of solutions easily. In comparison, sets with fewer vectors may contain more gaps of similar size to the optimal, making it difficult to reduce the feasible region effectively. 
\begin{figure}[ht]
    \begin{center}
        \includegraphics[width=\textwidth]{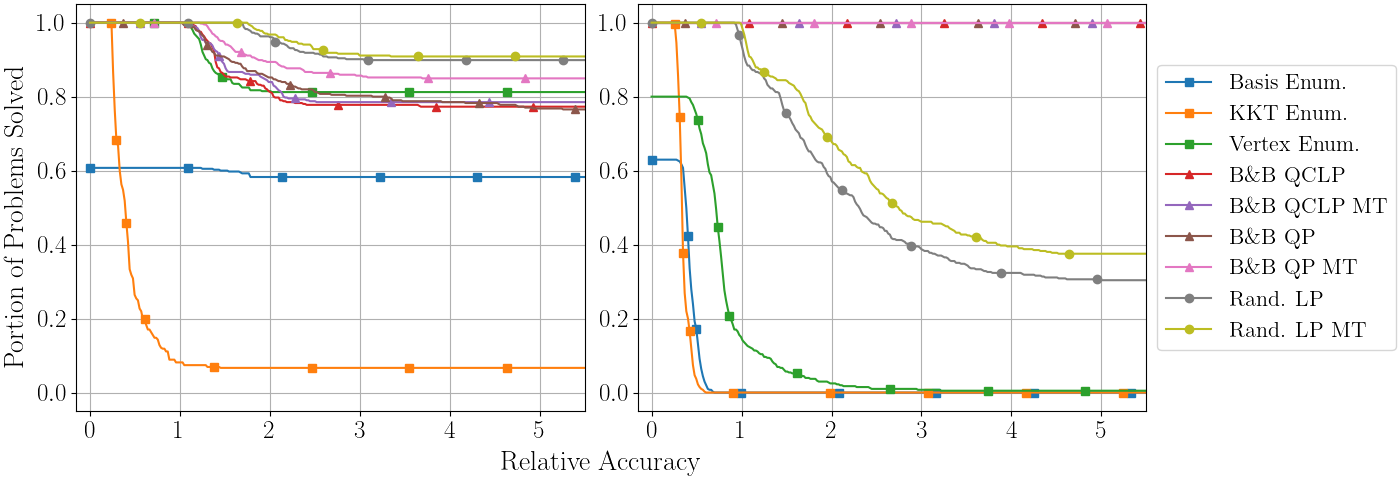}
        \caption{From left to right, the accuracy profile of the: (a) full test set without the augmented maximal $\delta$-shift positive basis and random positive spanning set, (b) augmented maximal $\delta$-shift positive basis. \label{fig:noaug+aug}}
    \end{center}
\end{figure}
Due to the large number of vectors in the set, the random LP method struggles as it often finds a solution corresponding to one of the many small, non-optimal gaps. Given that the augmented variant accounts for nearly half of our entire test set, this explains the large discrepancy between the random LP and branch-and-bound methods in the overall accuracy profile of Figure~\ref{fig:full_set}. 

Finally, we observe as expected, that increasing the $\delta$ parameter improves the performance of the random LP method, as it enlarges the size of the optimal ``gap''. This trend is illustrated in Figure~\ref{fig:augmented_delta}, in the Appendix, and confirms our intuition about the geometry of the problem.

With the augmented set removed, the accuracy profile in Figure~\ref{fig:noaug+aug}(a) suggests that the random LP method outperforms branch-and-bound. Aside from the augmented variant, all other sets in the test have less than or equal to $2n$ vectors, which suggests that the random LP method offers better performance when the set contains fewer vectors.  Notably, this trend also appears in the non-augmented maximal $\delta$-shift positive basis (which shares the same underlying structure as the augmented version) and in the highly random positive spanning set, as shown in Figures~\ref{fig:max_pbasis} and~\ref{fig:random_pspan}, respectively. This further suggests that the discrepancy in performance may be driven by the number of vectors in the set. 


\subsubsection{Impact of Set Structure} \label{subsec:minimal_sets_numerical}

Figure~\ref{fig:min+ortho}(a) presents the accuracy profile for all minimal sets, that is sets with exactly $n+1$ vectors. For these sets, we see that basis enumeration, vertex enumeration and random LP methods perform perfectly. This is expected, as the minimal sets contain only $n+1$ vectors, hence only $n$ bases need to be evaluated. Moreover, as a consequence of having just $n$ bases, the space is divided into $n$ distinct ``gaps'', making it very likely the random LP method will find a solution. In contrast, branch-and-bound is seen to perform worse in these settings. However, the QP formulation is only marginally worse while the QCLP formulation is significantly worse, which may be a side effect of the formulation.

Similarly, for the optimal orthogonal positive bases in Figure~\ref{fig:min+ortho}(b), both vertex enumeration and the random LP method achieve perfect performance, branch-and-bound performs slightly worse than perfect, and basis enumeration struggles. Since all vertices of the associated polytope have equal norm, this enables the vertex enumeration and random LP methods to instantly find the optimal solution. Basis enumeration, however, struggles because not all bases correspond to vertices.

\begin{figure}[ht]
    \begin{center}
      \includegraphics[width=\textwidth]{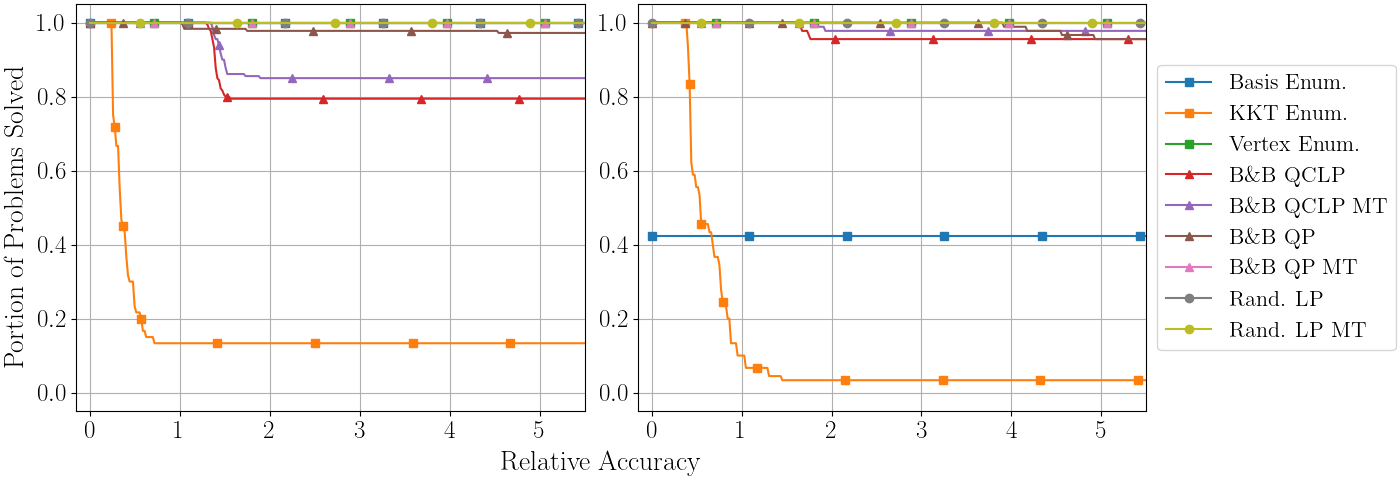}
      \caption{From left to right, the accuracy profile of: (a) all minimal sets, (b) the optimal orthogonal positive bases. \label{fig:min+ortho}}
    \end{center}
\end{figure}
\vspace{-1em}

\subsubsection{Impact of Dimension}\label{subsec:dimensions}
As expected, we observed that as the dimension increases, the performance of all algorithms deteriorates. Figure~\ref{fig:dimension_full}, in the Appendix, support this observation. Notably, the branch-and-bound curve does not decline as steeply as that of the random linear program; however, this is primarily due to the large number of augmented maximal $\delta$-shift positive bases in the test set, for which the branch-and-bound method performs perfectly across all dimensions. The augmented test set also contributes to the steep drop in the random LP method's performance, as the method tends to perform poorly on the augmented set described in Section~\ref{subsec:fulldata}.

\subsubsection{Impact of Formulation} \label{subsec:formulation}
When applying the branch-and-bound methods, it is worth noting that the solver performance is sensitive to the formulation used (QP or QCLP). Across our entire test set, we found in general both QP and QCLP formulations to perform similarly, with the QP formulation with multithreading slightly outperforming the rest. Both formulations solve the augmented set perfectly. 
It is not definitive if one formulation is universally better than the other as each performs better on different types of sets. That said, based on the overall accuracy profile across the entire test set, the QP formulation with multithreading may be marginally stronger.

\section{Conclusion}\label{sec:conclusion}
This paper explored solution methods for the NP-Hard cosine measure problem in higher dimensions. To tackle this, we proposed a novel reformulation of the cosine measure problem as norm maximization over a polytope and found that all solutions correspond to vertices. This reformulation inspired two new and simple algorithms, in particular the vertex enumeration method and the random LP method. 

To evaluate the performance of the algorithms, we developed a robust test set, including two new methods for the creation of minimal and maximal positive bases with known cosine measure. We then tested the efficacy of the new and existing algorithms using our test set.

In our numerical results, we found that the random LP method and branch-and-bound generally outperform the enumerative methods. Branch-and-bound performed better on sets with a larger number of vectors, while the random LP method was more effective for sets of size between $n+1$ and $2n$. Among the enumerative approaches, vertex enumeration performed the best, as it involved the fewest number of objects to enumerate.

There are several promising directions for future work. More extensive numerical testing could be done to help determine the threshold of the size of set at which to prefer the random LP method over branch-and-bound. Additionally, a tailor-made branch-and-bound algorithm leveraging the geometry of the reformulation may be possible. Lastly, for the random LP method, developing more sophisticated sampling methods could improve the performance of the algorithm.

\clearpage
\appendix
\section{Numerical Figures}
\label{sec:appendix-figures}

\begin{figure}[ht]
    \begin{center}
      \includegraphics[width=\textwidth]{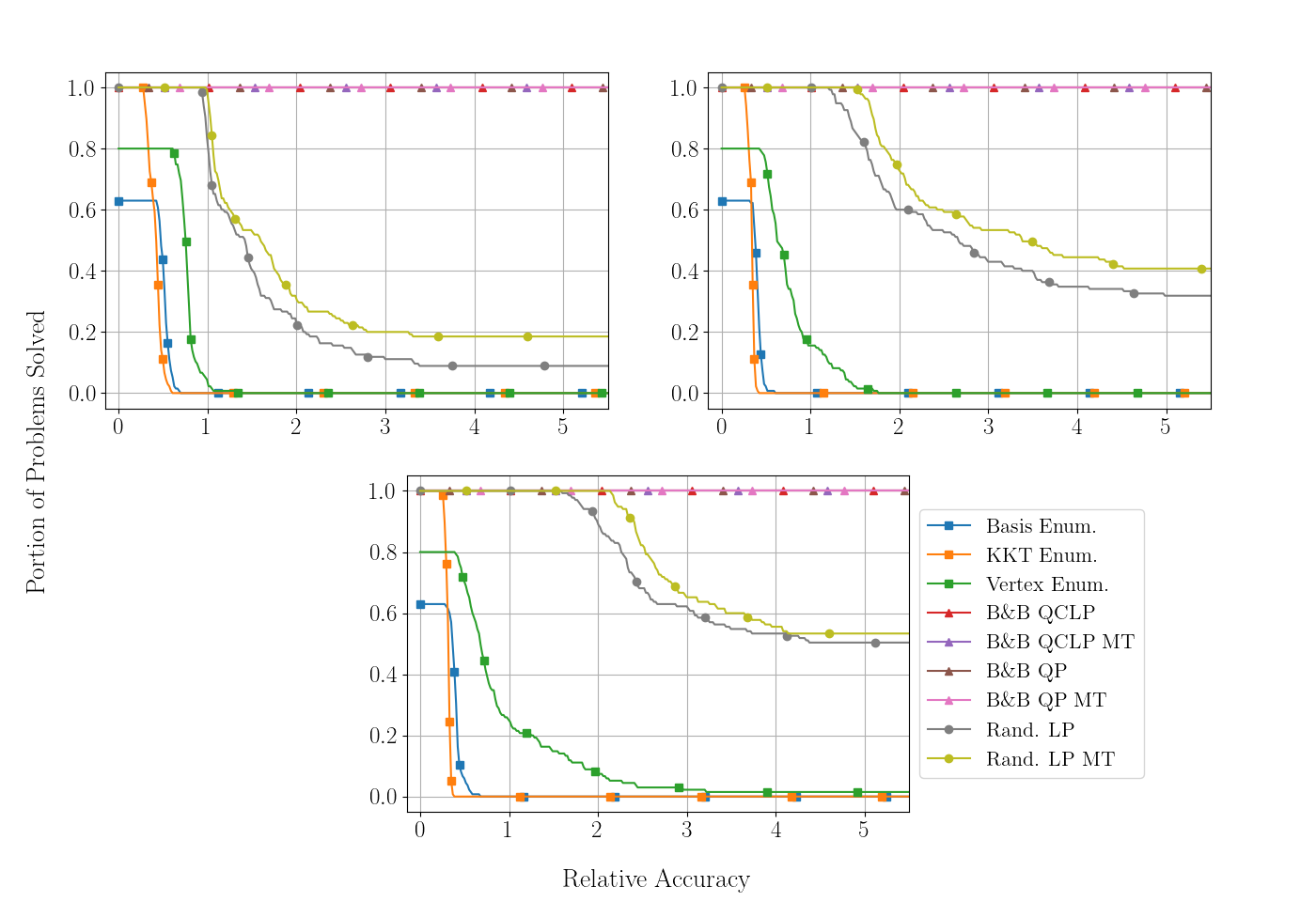}
      \caption{Clockwise, starting from the top left. The accuracy profile of the augmented maximal positive basis with $\delta = 0$, $\delta = \frac{1}{2n}$, and $\delta = \frac{2}{3n}$. \label{fig:augmented_delta}}
    \end{center}
\end{figure}

\begin{figure}[ht]
    \begin{center}
      \includegraphics[width=0.6\textwidth]{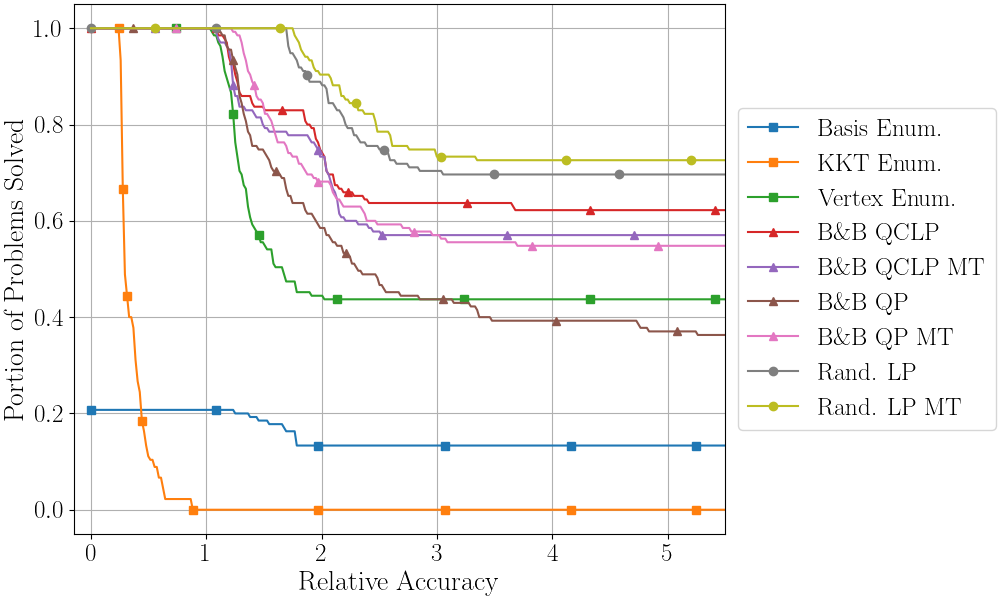}
      \caption{Accuracy profile of the maximal $\delta$-shift positive bases, with all $\delta$ parameters included. \label{fig:max_pbasis}}
    \end{center}
\end{figure}

\begin{figure}[ht]
    \begin{center}
      \includegraphics[width=0.6\textwidth]{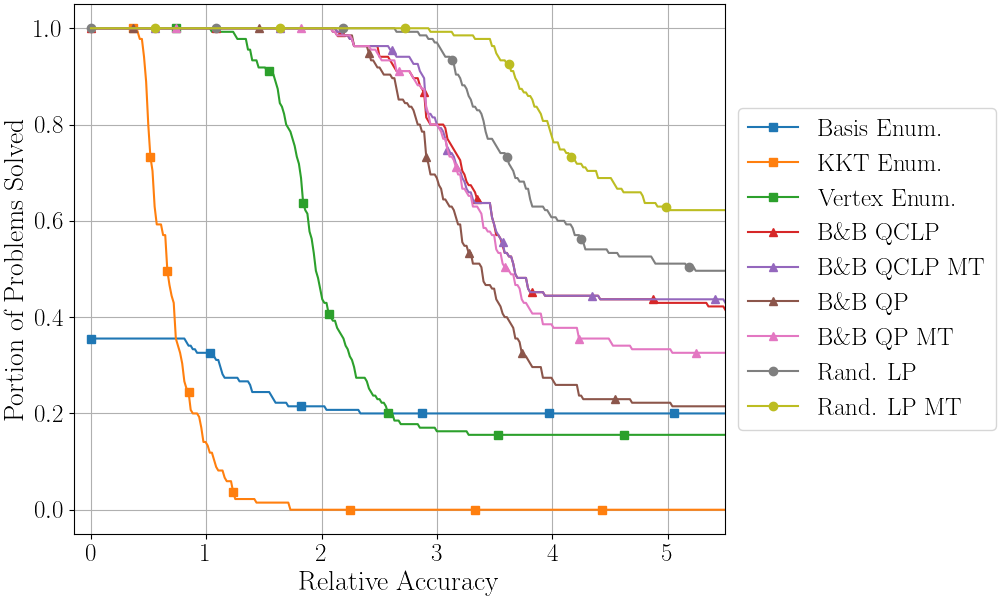}
      \caption{Accuracy profile of the random positive spanning sets. \label{fig:random_pspan}}
    \end{center}
\end{figure}

\begin{figure}[ht]
    \begin{center}
      \includegraphics[width=\textwidth]{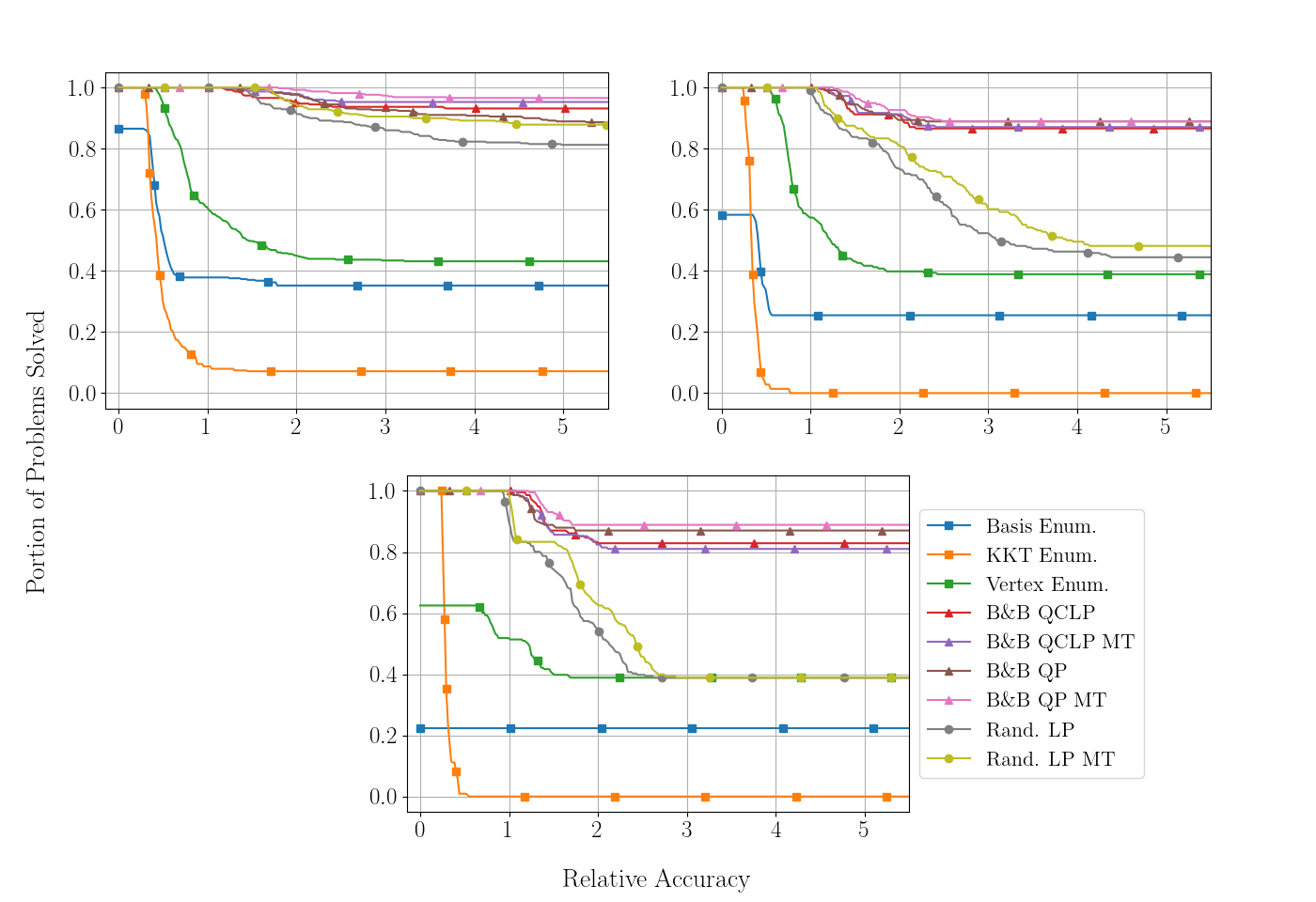}
      \caption{Clockwise, starting from the top left. The accuracy profile of test sets with dimension $n < 30$, $30 \leq n < 70$, and $70 \leq n \leq 100$. \label{fig:dimension_full}}
    \end{center}
\end{figure}

\clearpage
\bibliographystyle{elsarticle-num-names} 
\bibliography{cas-refs}



\end{document}